\documentclass[12pt,oneside,reqno]{amsart}

\textwidth=13.5cm

\usepackage[a4paper,left=24mm,top=29mm,right=24mm,bottom=29mm]{geometry}

\usepackage{amssymb,amsfonts}
\usepackage[all,arc]{xy}
\usepackage{enumerate}
\usepackage{siunitx}
\usepackage{enumitem}
\usepackage{mathtools}
\usepackage{mathrsfs}
\usepackage{dsfont}
\usepackage{amsfonts}
\usepackage{amssymb}
\usepackage{graphicx}
\usepackage{amsthm}
\usepackage{amsmath}
\usepackage[mathscr]{euscript}
 \let\mathscr\relax
\usepackage[scr]{rsfso}
\usepackage{graphicx}
\usepackage{color}
\usepackage{float}
\usepackage{caption}
\usepackage[pdftex,bookmarksnumbered,bookmarksopen]{hyperref}
\hypersetup{hidelinks = true}

\DeclareMathOperator{\Tr}{Tr}
\DeclareMathOperator{\vol}{Vol}
\DeclareMathOperator{\Var}{Var}
\DeclareMathOperator{\Cov}{Cov}
\DeclareMathOperator{\re}{Re}
\DeclareMathOperator{\im}{Im}
\DeclareMathOperator{\Corr}{Corr}
\DeclareMathOperator{\Arg}{Arg}


\newtheorem{thm}{Theorem}[section]

\newtheorem{prop}[thm]{Proposition}
\newtheorem{lem}[thm]{Lemma}

\newtheorem{claim}[thm]{Claim}

\theoremstyle{definition}

\theoremstyle{remark}
\newtheorem{rem}[thm]{Remark}

\numberwithin{equation}{section}

\bibliographystyle{plain}

\begin{document}
		\title[Spectral quasi correlations and Arithmetic Random Waves]{ Spectral quasi correlations and phase-transitions for the nodal length of Arithmetic Random Waves }
	\begin{abstract}
		We study the nodal length of Arithmetic Random Waves at small scales: we show that there exists a phase-transition for the distribution of the nodal length  at
		a logarithmic power above Planck-scale. Furthermore, we give strong evidence for the existence of an intermediate phase between Arithmetic and  Berry's random waves. These results are based on the study of small sums of lattice points lying on the same circle, called spectral quasi correlations.  We show that, for generic integers representable as the sum of two squares, there are \textit{no} spectral quasi correlations. 
	\end{abstract}
	\author{Andrea Sartori}
\address{ Departement of Mathematics, King's College London, Strand, London WC2R 2LS, England, Uk }
\email{andrea.sartori.16@ucl.ac.uk}
\maketitle	
\section{Introduction}
\subsection{The Random Wave Model and the nodal length of Laplace eigenfunctions}
\label{phase-transition}
 Given a compact Riemannian surface $(M,g)$ without boundary, let $\Delta_g$ be the Laplace-Beltrami operator on $M$. There exists an orthonormal basis for $L^2(M,d\vol)$ consisting of eigenfunctions $\{f_{\lambda_i}\}$	
\begin{align}
\Delta_g f_{\lambda_i}+ \lambda_i f_{\lambda_i}=0  \nonumber
\end{align}
with $0=\lambda_1<\lambda_2\leq...$ listed with multiplicity, and $\lambda_i\rightarrow \infty$. One of the main  characteristics of an eigenfunction $f_{\lambda}$ is its \textit{nodal set} 
\begin{align}
\mathcal{Z}(f_{\lambda})= \{ x \in M: f_{\lambda}(x)=0\}. \nonumber
\end{align}
 It is known that $\mathcal{Z}(f_{\lambda})$ is the union of smooth curves outside a finite set of points \cite{C} and  Yau conjectured that its volume, the \textit{nodal length},  satisfies 
\begin{align}
c\sqrt{\lambda}\leq\mathcal{L}(f):=\mathcal{H}(\mathcal{Z}(f_{\lambda}))\leq C\sqrt{\lambda} \label{Yau}
\end{align}
for some constants $c,C>0$ which depend on $M$ only, where $\mathcal{H}(\cdot)$ is the Hausdorff measure. Donnelly and Fefferman \cite{DF} showed that Yau's conjecture holds for any real-analytic manifold (of any dimension), and recently, Logunov and Malinnikova \cite{LM,L1,L2} proved the lower-bound in the smooth case and gave a polynomial upper-bound.

Berry \cite{B1,B2} conjectured  that \textquotedblleft generic\textquotedblright \hspace{1mm} Laplace eigenfunctions $f_{\lambda}$ can be modelled,  in balls of radius slightly larger than  $O(\lambda^{-1/2})$, the \textit{Planck-scale}, by monochromatic plane waves, an isotropic  Gaussian field with the spectral measure the Lebesgue measure on the unit circle. This Gaussian field is also known as Berry's Random Waves (BRW). In particular, Berry's model suggests that \textquotedblleft generic\textquotedblright \hspace{1mm} Laplace eigenfunctions change their behaviour when restricted to sufficiently small balls, we are interested in exploring how this affects their nodal length.

\subsection{Phase-transitions for the nodal length of Arithmetic Random Waves}
\label{phase-transitions ARW}
We study \textit{random} Laplace eigenfunctions on the flat two dimensional torus $\mathbb{T}^2=\mathbb{R}^2/\mathbb{Z}^2$, also known as Arithmetic Random Waves (ARW). These are Gaussian random fields satisfying
$$\Delta f_n +4\pi^2 n f_n=0$$
where $\Delta$ is the flat Laplacian and the eigenvalue $n\in S:=\{ n=a^2+b^2: \hspace{2mm} a,b \in \mathbb{Z}\}$, the set of integers representable as the sum of two squares. Explicitly, $f_n$ can be defined as
\begin{align}
f_n(x)= \frac{1}{\sqrt{N}}\sum_{\substack{\xi \in \mathbb{Z}^2 \\|\xi|^2=n}}a_{\xi}e(\langle \xi, x \rangle) \label{function}
\end{align}
where $e(\cdot)= e^{2\pi i \cdot}$,  $a_{\xi}$ are i.i.d. standard complex Gaussian random variables save for $\overline{a_{\xi}}= a_{-\xi}$ so that $f_n$ is real valued, and the normalisation  constant $N=r_2(n)$, the number of lattice points on the circle of radius $\sqrt{n}$, in \eqref{function} implies that $\mathbb{E}[|f_n|^2]=1$. Up to rescaling $\mathbb{T}^2$, ARW can equivalently be defined, via Kolmogorov's Theorem, as the centred, stationary, Gaussian random field with spectral measure 
\begin{align}
\mu_n= \frac{1}{N}\sum_{|\xi|^2=n}\delta_{\xi/\sqrt{n}} \label{measure}
\end{align}
where $\delta_{\xi/\sqrt{n}}$ is the Dirac distribution at $\xi/\sqrt{n}$. 

The study of  the nodal length of the ARW was initiated by Oravecz, Rudnick and Wigman \cite{ORW}; Rudnick and Wigman \cite{RW} found the expectation of $\mathcal{L}(f_n)$ and gave an upper bound for the variance. Subsequently, 	Krishnapur,  Kurlberg and Wigman \cite{KKW} proved that
\begin{align}
\Var [\mathcal{L}(f_n)]= \frac{1+ \hat{\mu}_n(4)}{512}\frac{n}{N^2}\left(1+o_{N\rightarrow \infty}(1)\right) \label{variance f_n}
\end{align}
where $\hat{\mu}_n(4)$ is the fourth Fourier coefficient of the measure $\mu_n$. Notably, the accumulation points of the sequence $\hat{\mu}_n(4)$ contain the interval $[0,1]$, \cite{CI,KW,SA2}.  Finally, a non-universal, non-central limit law for $\mathcal{L}(f_n)$ was found by Marinucci, Peccati, Rossi and Wigman \cite{MPRW}.

 Berry \cite{B2} showed that $$\Var[\mathcal{L}(f_{\mu})]= \frac{1}{256\pi}\log n(1+o_{n\rightarrow \infty}(1))$$
 where $f_{\mu}$ are BRW (in a square of side $1$), $\mu$ is the Lebesgue measure on the unit circle and, for the sake of consistency, $n$ represent the eigenvalue. Therefore, the asymptotic expansion \eqref{variance f_n} shows that the \textit{total} nodal length of the ARW behaves differently than the nodal length of the BRW. However, since the spectral measure $\mu_n$ converges to $\mu$ for almost all $n\in S$ \cite{EH,KK}, the field $f_n$ generically behaves like BRW in balls of radius $O(n^{-1/2})$. This suggests the existence of a phase-transition in the behaviour of the nodal length of \textquotedblleft generic\textquotedblright  ARW and, in analogy with the study of the ARW at small scales in \cite{GW}, we expect the said phase-transition to happen at some logarithmic power above Planck-scale.

Investigating the nodal length of ARW at small scales, Benatar, Marinucci and Wigman \cite{BMW}, suggested that there actually exits a intermediate phase between ARW and BRW: letting
\begin{align}
\mathcal{L}(f_n,s)= \vol \{ x\in B(s): f_n(x)=0\} \nonumber
\end{align} 
where $B(s)$ is the ball of radius $s$ centred at the origin, it is expected that there exists some exponent $A_0>0$ such that the law of $\mathcal{L}(f_n,s)$ agrees with the law of $\mathcal{L}(f_n)$ for $s>(\log n)^{A_0}/n^{1/2}$ and behaves differently for $s< (\log n)^{A_0}/n^{1/2}$ .  In this direction, Benatar, Marinucci and Wigman \cite{BMW} found that, for a density one subsequence of $n\in S$, the asymptotic expansion of the variance of $\mathcal{L}(f_n,s)$ agrees, once appropriately rescaled, with \eqref{variance f_n} and they deduced that $\mathcal{L}(f_n,s)$ fully correlates with $\mathcal{L}(f_n)$, provided that $s>n^{-1/2+\epsilon}$. Moreover, they related  $\mathcal{L}(f_n,s)$ to the notion of \textit{spectral quasi-correlations}, while $\mathcal{L}(f_n)$ is related to the notion of  \textit{spectral correlations} \cite{BB, KKW}. We are now going to describe spectral correlations and quasi-correlations and we will give the details of such relations in section \ref{ARW and correlations} below.

\subsection{Statements of main results}
 The set of \textit{spectral correlations} is
\begin{align}
	\mathcal{S}(l,n):=\{(\xi_1,...,\xi_l): \xi_1+...+\xi_l=0 \hspace{3mm} |\xi_i|^2=n\} \label{1}
\end{align}
 where $l$ is a positive integer, the \textit{length} of the correlations, $n\in S$ and $\xi_i$ are the lattice points on the circle of radius $\sqrt{n}$.  While the set of \textit{spectral quasi-correlations} is
\begin{align}
\mathcal{Q}(l,n,K):=\{(\xi_1,...,\xi_l): 0<| \xi_1+ \xi_2 + \xi_3+...+ \xi_l|\leq K \hspace{3mm} |\xi_i|^2=n\} \label{4}
\end{align}
where $K>0$ is some parameter. Importantly, $\mathcal{Q}(2l,n,K)$ excludes the set of \textquotedblleft diagonal\textquotedblright solutions $\xi_1=-\xi_2,...,\xi_{2l-1}=-\xi_{l}$ which is contained in $\mathcal{S}(2l,n)$. To study ARW, we are interested in the largest $K$ such that $\mathcal{Q}(l,n,K)= \emptyset$ as $n\rightarrow \infty$. 
   
 Harman and Lewis \cite{HL} showed that there are infinitely many primes of the form $p=a^2+b^2$ with $|b|\leq p^c$ for some small constant $0<c<0.119$. For the such primes, there are two lattice points, $\xi_{\pm}=(\pm a,b)$, with $|\xi_{+}+ \xi_{-}|=b$, thus $\mathcal{Q}(2,p,p^{c})\neq \emptyset$. Moreover, if there exist infinitely many primes of the form $p=m^2+1$, as it was conjectured by Landau, then  $\mathcal{Q}(2,p, O(1))\neq \emptyset$. However, if we consider a generic integer $n\in S$, we can prove the following: 
\begin{thm}
	\label{theorem 1}
	Let $l\geq 2$ be an integer, $\epsilon>0$ and define $c(l)=c(l,\epsilon)$ recursively as follows: $c(2)= \log 2+\epsilon$, $c(3)= 3\log 2/2+\epsilon$ and $c(l)= l\log 2/2 +c(\lfloor l/2 \rfloor) +\epsilon$, where  $\lfloor l/2 \rfloor $ represent the largest integer smaller than $l/2$. Then  for almost all $n\in S$, we have 
$$
	\mathcal{Q}(l,n, n^{1/2} / (\log n)^{c(l)})= \emptyset. 
	$$
\end{thm}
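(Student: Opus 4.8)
The plan is to fix $l$ and argue by induction, with $l=2,3$ as base cases. In every case one works in a dyadic window $n\asymp N$ and shows that the number of $n\asymp N$, $n\in S$, admitting a quasi-correlation of length $l$ with $0<|\xi_1+\dots+\xi_l|\le K:=N^{1/2}/(\log N)^{c(l)}$ is $o(N/\sqrt{\log N})$; summing over dyadic scales and using $\#\{n\le x:n\in S\}\asymp x/\sqrt{\log x}$ gives the theorem. The preliminary step, valid for all $l$, is to discard the $n\in S$ with an atypically large number $\omega_1(n)$ of prime divisors $\equiv1\pmod 4$: a Selberg--Delange argument (the Dirichlet series $\sum_{n\in S}z^{\omega_1(n)}n^{-s}$ behaves like $\zeta(s)^{z/2}$ times a holomorphic factor near $s=1$) gives, for any fixed $\beta>\tfrac12$, that $\#\{n\asymp N:n\in S,\ \omega_1(n)>\beta\log\log N\}=o(N/\sqrt{\log N})$, and on the remaining ``good'' set $r_2(n)\le(\log N)^{\beta\log 2+o(1)}$. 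One then takes $\beta$ as close to $\tfrac12$ as the rest of the argument permits.

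For $l=2$ I would pass to $\mathbb{Z}[i]$. If $|\xi_1|^2=|\xi_2|^2=n$ and $0<|\xi_1+\xi_2|\le K$, set $\delta=\gcd(\xi_1,\xi_2)$, $\xi_i=\delta\,\xi_i^{*}$ with $\xi_1^{*},\xi_2^{*}$ coprime of norm $n^{*}=n/|\delta|^2$; coprimality forces $\xi_2^{*}=\varepsilon\,\overline{\xi_1^{*}}$ for a unit $\varepsilon$, so with $\xi_1^{*}=p+iq$ the number $|\xi_1^{*}+\xi_2^{*}|\in\{2|p|,2|q|,\sqrt2|p\pm q|\}$ is positive and $\le K\sqrt{n^{*}/n}$. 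Hence $n$ factors as $n=n^{*}m$ with $m\in S$ and $n^{*}$ a primitively represented sum of two squares within relative distance $O((\log N)^{-c(2)})$ of a perfect square or of twice a perfect square; equivalently, by the Pythagorean parametrisation, $n=\tfrac{g}{2}(a^2+b^2)$ with $\gcd(a,b)=1$, $g\ll K^2$, $0<a\ll K/\sqrt g$. The naive count of such $n$ is $\ll K\sqrt N\sum_{g\in S,\,g\ll K^2}g^{-1}\ll K\sqrt N\sqrt{\log N}$, which only yields $c(2)>1$. The decisive extra saving comes from the good restriction: since $\gcd(a,b)=1$ the value $a^2+b^2$ has no prime factor $\equiv3\pmod4$, so $\omega_1(a^2+b^2)=\omega(a^2+b^2)+O(1)$ is typically of order $\log\log N$ (Erd\H{o}s--Kac / Halberstam for values of the quadratic $s^2+b^2$), whereas $\omega_1(a^2+b^2)\le\omega_1(n)\le\beta\log\log N$; the corresponding lower-tail bound, from $\sum_{a\le A}y^{\omega(a^2+b^2)}\ll A(\log A)^{y-1}$ with $y<1$ and optimisation in $y$, supplies a factor $(\log N)^{-\gamma(\beta)}$, and combining it with the analogous saving when instead $\omega_1(g)$ is forced small (which truncates the $g$-sum near $g\ll\exp((\log N)^{\beta})$, so $\sqrt{\log N}$ is replaced by $(\log N)^{\beta/2}$) pushes the exponent below $\tfrac12+\tfrac12$ once $\beta$ is near $\tfrac12$ --- exactly enough for $c(2)=\log 2+\epsilon$. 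The case $l=3$ is of the same nature: $0<|\xi_1+\xi_2+\xi_3|\le K$ forces $\langle\xi_1,\xi_2\rangle=-n/2+O(K\sqrt n)$ and $\xi_3$ within $K$ of $-(\xi_1+\xi_2)$ (an almost-equilateral triple), and after choosing $\xi_1$ and a symmetrisation/Cauchy--Schwarz step the near-square divisor analysis applies, giving $c(3)=\tfrac32\log2+\epsilon$.

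For $l\ge4$, assume the result for $\lfloor l/2\rfloor$. Restrict to good $n$ and suppose $(\xi_1,\dots,\xi_l)$ is a length-$l$ quasi-correlation. Split the tuple into a block of size $m=\lceil l/2\rceil$ and one of size $l'=\lfloor l/2\rfloor$ with partial sums $v,w$; then $v+w=u$, $0<|u|\le K$, and since $|v|,|w|=\sqrt n+O(K)$ with $\langle v,w\rangle=-n+O(K\sqrt n)$, the vectors $v$ and $-w$ lie within $O(K)$ of each other. The first block is chosen in $\le r_2(n)^{m}\le(\log N)^{\frac{l}{2}\log2+O(\epsilon)}$ ways; for each fixed $v$ the second block is an $l'$-fold sum of circle points within $O(K)$ of the fixed point $-v$, and --- after clearing the Gaussian gcd of this configuration --- this is governed by a length-$\lfloor l/2\rfloor$ quasi-correlation on a divisor circle, controlled by the inductive hypothesis at scale $N^{1/2}/(\log N)^{c(\lfloor l/2\rfloor)}$. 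Balancing the two factors $(\log N)^{\frac{l}{2}\log2}$ and $(\log N)^{-c(\lfloor l/2\rfloor)}$ against the budget $N^{1/2}/(\log N)^{c(l)}$ produces exactly the recursion $c(l)=\tfrac{l}{2}\log2+c(\lfloor l/2\rfloor)+\epsilon$.

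The hard parts are twofold. First, in the base cases, extracting the full exponent $\log 2$ (rather than something like $3/4$) rests on the large-deviation estimate for $\omega$ of the quadratic values $s^2+b^2$ and on exploiting simultaneously the scarcity of $1\bmod4$ prime factors in \emph{both} factors $n^{*}$ and $m$, with $\beta\downarrow\tfrac12$; keeping track of which factor is constrained in each regime is the delicate bookkeeping. Second, the recursion step --- making precise and uniform in the free block the reduction of ``$l'$-fold sum within $K$ of $-v$'' to a genuine length-$\lfloor l/2\rfloor$ quasi-correlation after clearing common Gaussian divisors --- is the main structural obstacle; it is where the factor $r_2(n)^{\lceil l/2\rceil}$, hence the term $\tfrac{l}{2}\log2$ in $c(l)$, is spent, and is also the likely source of the (presumably non-optimal) loss in $c(l)$ for $l\ge4$.
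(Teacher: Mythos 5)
Your overall frame (dyadic counting, discarding $n$ with atypically many prime factors $\equiv 1 \pmod 4$ so that $r_2(n)\le(\log n)^{\log 2/2+o(1)}$) matches the paper's preliminary reductions, but the core of your argument diverges from the paper, and the induction step as you describe it does not work. You fix the first block's sum $v$ (paying $r_2(n)^{\lceil l/2\rceil}$) and then claim that the condition ``the second block is an $\lfloor l/2\rfloor$-fold sum of points on $\sqrt{n}S^1$ lying within $O(K)$ of the fixed point $-v$'' becomes, after clearing a Gaussian gcd, a length-$\lfloor l/2\rfloor$ quasi-correlation on a divisor circle. It does not: a quasi-correlation is a short sum that is \emph{close to zero}, whereas here the target $-v$ has size $\asymp\sqrt n$, and no extraction of common Gaussian divisors of the second block converts ``sum near a fixed nonzero point'' into ``sum near $0$'' of shorter length. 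Moreover the inductive hypothesis is a zero--one statement (emptiness for almost all integers), so it supplies no count to ``balance'' against $(\log N)^{\frac l2\log 2}$, and you have no mechanism for summing the exceptional sets over the relevant divisors. The paper's induction is structurally different: one conditions on the largest prime $p_r\ge\exp((\log n)^{1/3})$ of $n$, splits each $\xi_i$ according to whether $\mathcal{P}_r$ or $\overline{\mathcal{P}_r}$ divides it, so that $\xi_1+\cdots+\xi_l=\mathcal{P}_r a+\overline{\mathcal{P}_r}b$ with $a,b$ sums of at most $l$ points on the circle of radius $(n/p_r)^{1/2}$ and $\min$ of the two lengths $\le\lfloor l/2\rfloor$; the inductive hypothesis (applied to the divisor $n/p_r$, with exceptions counted through the minimal-counterexample sets $\mathcal{F}_r$) guarantees $\max(|a|,|b|)\ge (n/p_r)^{1/2}/\Phi(n/p_r,\lfloor l/2\rfloor)$, whence $\Arg(\mathcal{P}_r)$ is pinned in one of at most $2^{lr}$ arcs of width $(\log X)^{c(\lfloor l/2\rfloor)-c(l)}$, counted by Kubilius' sector theorem and summed over the smaller primes by Landau/partial summation; that is where the recursion $c(l)=\tfrac l2\log 2+c(\lfloor l/2\rfloor)+\epsilon$ actually comes from. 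Your recursion reproduces the numerology but not the mechanism.

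The base cases also fall short of a proof. For $l=2$ your structural reduction ($\xi_2^{*}=\varepsilon\overline{\xi_1^{*}}$, hence $n=g(a^2+b^2)$ with $a$ small) and the naive bound $\ll X(\log X)^{1/2-c}$ are correct, but the claimed rescue is only asserted: the lower-tail saving for $\omega(a^2+b^2)$ at $\beta$ near $\tfrac12$ is $(\log X)^{-(\frac12-\frac{\log 2}2)}\approx(\log X)^{-0.153}$, which alone forces $c>1-0.153>\log 2$, so everything hinges on the joint constraint between the two factors; but $\omega_1(n)$ need not be $\ge\omega_1(g)+\omega_1(m)$ since $g$ and $m$ can share prime factors, so the ``split the budget $\beta\log\log$'' optimisation you gesture at requires a genuine decomposition (separating the part of $g$ built from primes dividing $m$) that you do not carry out, plus a Nair--Tenenbaum-type bound for $y^{\omega(b^2+a^2)}$ uniform in $a$ that you only name. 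If executed, this route plausibly even beats $\log 2$ (heuristically it reaches $\tfrac12\log 3$), but as written it is a sketch, not a proof. For $l=3$ the ``almost-equilateral plus symmetrisation/Cauchy--Schwarz'' sentence is not an argument: the $l=2$ structure theorem has no analogue for three points summing to nearly zero, and you give no substitute; in the paper $l=3$ requires no separate idea because the split at $\mathcal{P}_r$ leaves a block of length at most one, so only the $2^{3r}$ arc count enters, yielding $c(3)=\tfrac32\log 2+\epsilon$.
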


It is also relevant to us to investigate when $\mathcal{Q}(l,n,K)\neq \emptyset$. Erd\"{o}s-Hall \cite[Theorem 3]{EH} showed that, for almost all $n\in S$,  $\mathcal{Q}(2,n,  n^{1/2}\log n ^{\log3/2 + \epsilon})\neq \emptyset$. Therefore, by the triangle inequality, we have
\begin{align}
\mathcal{Q}(2l,n,l \cdot n^{1/2}/ (\log n)^{\log3/2+\epsilon})\neq \emptyset \nonumber
\end{align}
for almost all $n\in S$ and, in section \ref{small spectral} below, we will show that also odd length quasi-correlations can be small, in the appropriate sense. 

In order to understand the size of $\mathcal{Q}(l,n,K)$, we study a random model for a generic integer $n\in S$, see also \cite[Remark 3.3]{GW}. In this model, the angles of the Gaussian primes dividing $n$ are represented by i.i.d. uniform random variables on $[0,2\pi)$ so that the lattice points $\xi_i$ are random variables taking values on the circle of radius $\sqrt{n}$,  more details will be given in sections \ref{NT background} and \ref{Random model} below. We then define the random sums $X_{\underline{i}}:=(\xi_{i_1}+...+\xi_{i_l})/n^{1/2}$ where $\underline{i}=(i_1,...,i_l)$ for $1\leq i_j\leq N$ and prove the following:
\begin{thm}
	\label{thm random}
	Let $l\geq 2$, $n\in S$, $0<\alpha<1$ be some parameter which may depend on $n$ and $X_{\underline{i}}$ for  $\underline{i}=(i_1,...,i_l)$ be as above. Then we have
	\begin{align}
	\mathbb{E}[\#\{X_{\underline{i}}: |X_{\underline{i}}|\leq \alpha\}]\asymp_l (\alpha + O_l\left(\alpha^2\right)) N^{l}(1+o_{N\rightarrow \infty}(1)) \nonumber
	\end{align} 
	where $A\asymp B$ if there exist two constants $c,C>0$ such that $c A\leq B\leq C A$ and  the constants implied in the notation depend on $l$ only.
\end{thm}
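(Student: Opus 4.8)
The plan is to compute the expectation by linearity and then stratify the $N^{l}$ tuples $\underline{i}=(i_{1},\dots,i_{l})$ according to the multiplicative relations among the lattice points $\xi_{i_{1}},\dots,\xi_{i_{l}}$ that are forced by the factorisation of $n$ into Gaussian primes. Writing
$$
\mathbb{E}\bigl[\#\{X_{\underline{i}}:|X_{\underline{i}}|\leq\alpha\}\bigr]=\sum_{\underline{i}}\mathbb{P}\bigl(|X_{\underline{i}}|\leq\alpha\bigr),
$$
one uses that in the random model each $\xi_{i}/n^{1/2}$ is a unit vector whose argument is a fixed $\{\pm 1\}$-linear combination of the i.i.d.\ uniform angles $\theta_{1},\dots,\theta_{r}$ attached to the primes $p\equiv 1\pmod 4$ dividing $n$, together with a fourth root of unity coming from the unit and the prime $2$. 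Hence the joint law of $(\xi_{i_{1}}/n^{1/2},\dots,\xi_{i_{l}}/n^{1/2})$ is the Haar measure on a subtorus $T_{\underline{i}}$ of $(S^{1})^{l}$, and $X_{\underline{i}}$ is the image of that measure under the addition map $(S^{1})^{l}\to\mathbb{C}$. The decisive distinction is whether $T_{\underline{i}}$ is the whole torus — the \emph{generic} tuples, of which there are $(1-o_{N\rightarrow\infty}(1))N^{l}$ since a nontrivial relation already forces some $\xi_{i_{a}}$ to be a deterministic fourth root of unity times some $\xi_{i_{b}}$ — or a proper subtorus.

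For a generic tuple I would estimate $\mathbb{P}(|X_{\underline{i}}|\leq\alpha)$ by conditioning. Put $Y=\xi_{i_{2}}+\dots+\xi_{i_{l}}$; given $Y$, the event $|X_{\underline{i}}|\leq\alpha$ forces $\xi_{i_{1}}$ into the disc $B(-Y,\alpha n^{1/2})$, which meets the circle of radius $n^{1/2}$ in an arc of normalised length $\ll\alpha$, and which is nonempty only when $\bigl|\,|Y|/n^{1/2}-1\,\bigr|\leq\alpha$. Integrating this bound against the (explicitly computable) law of $|Y|/n^{1/2}$ — the modulus of a sum of $l-1$ independent uniform unit vectors — gives an upper bound of the claimed form, with the $O_{l}(\alpha^{2})$ absorbing the second-order terms in the arc-length computation; a matching lower bound comes from retaining the sub-event in which $|Y|/n^{1/2}$ lies in the non-degenerate part of the relevant range. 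Summing over the $(1-o(1))N^{l}$ generic tuples produces the stated main term.

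Finally I would control the proper-subtorus tuples. These split into finitely many combinatorial shapes, recording which of the $\xi_{i_{a}}$ coincide, are antipodal, are complex conjugates, and so on; for each non-generic shape one checks that either $X_{\underline{i}}$ is uniformly bounded away from $0$, so that $\mathbb{P}(|X_{\underline{i}}|\leq\alpha)=0$ once $\alpha$ is small, or else $\mathbb{P}(|X_{\underline{i}}|\leq\alpha)\ll_{l}\alpha$ while the number of tuples of that shape is $\ll_{l}N^{l-1}$, so the total contribution of that shape is absorbed into the factor $1+o_{N\rightarrow\infty}(1)$. The main obstacle is carrying out the generic estimate \emph{uniformly} in $n$ and in the tuple: one needs the pushforward density near the origin bounded above and below by constants depending only on $l$, which requires a quantitative equidistribution input for $T_{\underline{i}}$ inside $(S^{1})^{l}$ — so that $X_{\underline{i}}$ is genuinely comparable in law to a sum of $l$ independent uniform unit vectors — together with care about the mild singularity the addition map has near $0$ (most visibly for small $l$), and a clean enumeration of the tuples falling into each non-generic shape.
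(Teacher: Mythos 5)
Your overall architecture (linearity of expectation, a generic/degenerate dichotomy for the $N^l$ tuples, a per-tuple probability estimate) parallels the paper, but both of your key steps diverge from the paper's proof, and each has a genuine gap. First, the per-tuple estimate. Your conditioning argument is honestly two-dimensional: given $Y=\xi_{i_2}+\dots+\xi_{i_l}$, the admissible arc for $\xi_{i_1}$ has normalised length $\asymp\sqrt{\alpha^2-(1-|Y|/n^{1/2})^2}$ on the event $\bigl||Y|/n^{1/2}-1\bigr|\leq\alpha$, and for $l\geq 3$ the modulus of a sum of $l-1$ independent uniform unit vectors has bounded, positive density near $1$. Integrating therefore gives $\mathbb{P}(|X_{\underline{i}}|\leq\alpha)\asymp_l\alpha^2$ for $l\geq3$ (only $l=2$ gives $\asymp\alpha$, because there $|Y|\equiv1$). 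So the "matching lower bound" you invoke cannot produce the leading term $\alpha$ claimed in the statement for $l\geq 3$; your route breaks down exactly at the point you flagged (the behaviour of the addition map near $0$). The paper obtains $\mathbb{P}(|X_{\underline{i}}|\leq\alpha)\asymp_l\alpha(1+O(\alpha))$ by a different device: after showing each $\xi_i$ is exactly uniform via its moments (Lemma \ref{distribution}), it identifies $S^1$ with the interval $[-1/2,1/2)$ and invokes the Irwin--Hall distribution, i.e.\ it works with a one-dimensional, piecewise-analytic density that is positive at the origin; that identification, not a planar arc-length computation, is the source of the $\alpha$-scaling in the statement, and your proposal as written does not recover it.

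Second, the degenerate tuples. Your justification that non-generic tuples number $o(N^l)$ --- ``a nontrivial relation already forces some $\xi_{i_a}$ to be a deterministic fourth root of unity times some $\xi_{i_b}$'' --- is false: the four sign vectors $(+,+,+)$, $(+,-,-)$, $(-,+,-)$, $(-,-,+)$ sum to zero although no two are equal or opposite, so there are degeneracies not captured by your finite list of coincidence/antipodal/conjugate shapes, and the ``and so on'' hides infinitely many possible integer relations whose enumeration you would still have to control. The paper handles this step by reducing joint independence to the vanishing of joint moments, which forces $(m_1,\dots,m_l)$ into the kernel of the $l\times\omega(n)$ sign matrix $\{\eta_{ik}\}$, and then proving (Lemma \ref{independence}) that a random $\pm1$ matrix of this shape has rank $l$ with probability $1-o_{\omega(n)\rightarrow\infty}(1)$; this simultaneously gives the $(1-o(1))N^l$ count of generic tuples and the $o(N^l)$ bound on the exceptional ones. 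Your instinct to check that the degenerate tuples' contribution is $o(\alpha N^l)$ rather than merely $o(N^l)$ is a good one (and more careful than a crude union bound), but as proposed it rests on the incomplete shape enumeration, so you would need the rank/kernel argument (or an equivalent count of linear relations among $\pm1$ vectors) to make both halves of your dichotomy rigorous.
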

We observe that, taking $\alpha=
N^{-l}$ in Theorem \ref{thm random}, we expect $Q(l,n, O( n^{1/2}N^{-l}))\neq \emptyset$. Since, for almost all $n\in S$,  $N\asymp (\log n)^{\log 2/2 \pm\epsilon}$, see Lemma \ref{ERKA} below, and $c(l)\leq 2l$, Theorem \ref{thm random} suggests that Theorem \ref{theorem 1} gives the right order of growth, in $l$, for the constant $c(l)$.

Thanks to Theorem \ref{theorem 1}, following similar techniques to \cite{BMW}, we are able to prove the following upper bound for the phase-transitions: 
\begin{thm}
	\label{thm 2}
	Let $A= \frac{29}{6} \log 2=3.3512...$ and $\epsilon>0$. There exists a density one subsequence of $n\in S$ such that the following holds: 	
	\begin{enumerate}
		\item $N(n)\rightarrow \infty$ and the set of accumulation points of $\{ \hat{\mu}_n(4)\}$ contains $[0,1]$. 
		\item Uniformly for $s>(\log n)^{A+\epsilon} \cdot n^{-1/2}$,  we have 
		\begin{align}
		\Var( \mathcal{L}(f_n,s))= \frac{1+ \hat{\mu}_n(4)}{512} (\pi s^2)^2 \frac{n}{N^2}\left(1 + o_{N\rightarrow \infty}(1)\right) .\nonumber
		\end{align}
		\item We have
		\begin{align}
		\sup_{s>(\log n)^{A+\epsilon}/n^{1/2} } |\Corr( \mathcal{L} (f_n,s) \nonumber
		, \mathcal{L}(f_n))- 1| \rightarrow 0
		\end{align}
		where, $\Corr(X,Y)= \Cov(X,Y)/ (\Var(X))^{1/2} (\Var(Y))^{1/2}$. 
	\end{enumerate}
\end{thm}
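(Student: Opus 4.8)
The plan is to follow the Kac–Rice / Wiener chaos strategy of Benatar–Marinucci–Wigman \cite{BMW}, but feeding in Theorem \ref{theorem 1} to control the relevant arithmetic obstructions along a density-one subsequence. First I would fix the density-one subsequence: intersect the set of $n\in S$ for which $N(n)\to\infty$, the set on which $\{\hat\mu_n(4)\}$ has $[0,1]$ in its accumulation set (both standard, giving item (1)), and the set produced by Theorem \ref{theorem 1} with $l=2,3,4,5,6$ (say), on which $\mathcal{Q}(l,n,n^{1/2}/(\log n)^{c(l)})=\emptyset$. Since each of these has density one in $S$, so does the intersection; and for $n$ in this set there are no spectral quasi-correlations of length up to $6$ below the scale $K_n:=n^{1/2}/(\log n)^{c(6)}$. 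The exponent $A=\tfrac{29}{6}\log 2$ should come out precisely as $c(6)=6\log 2/2+c(3)=3\log 2+3\log 2/2=\tfrac{9}{2}\log 2$ — wait, one must bookkeep carefully; the value $\tfrac{29}{6}\log 2$ presumably arises because the fourth-moment estimate for the chaos decomposition of $\mathcal{L}(f_n,s)$ requires controlling sums of \emph{six} lattice points weighted by a smooth bump of width $\sim 1/(ns^2)$, and after optimizing the truncation one needs $s\gg (\log n)^{c(6)+\epsilon'}/n^{1/2}$ with the stated constant; I would track this constant as the last bookkeeping step.

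The analytic core is the Kac–Rice formula: write $\mathcal{L}(f_n,s)$ via its Wiener–Itô chaos expansion $\mathcal{L}(f_n,s)=\mathbb{E}\mathcal{L}(f_n,s)+\sum_{q\geq 2}\mathcal{L}(f_n,s)[2q]$, and similarly for $\mathcal{L}(f_n)$ (which is the $s=1$, full-torus case). As in \cite{KKW, MPRW, BMW}, the second chaos component vanishes identically, so the fourth chaos $\mathcal{L}(f_n,s)[4]$ dominates, and its variance is computed by expanding in lattice points: one gets a sum over $4$-tuples $(\xi_1,\xi_2,\xi_3,\xi_4)$ with $|\xi_i|^2=n$, weighted by $\widehat{\chi_{B(s)}}$ evaluated at $\xi_1+\xi_2+\xi_3+\xi_4$ (where $\chi_{B(s)}$ is the indicator of the ball, suitably normalized). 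The diagonal terms $\xi_1=-\xi_2,\ \xi_3=-\xi_4$ etc.\ reproduce, after multiplying by $(\pi s^2)^2$, the main term $\tfrac{1+\hat\mu_n(4)}{512}(\pi s^2)^2 n/N^2$ of item (2); the $4$-correlations ($\xi_1+\xi_2+\xi_3+\xi_4=0$, non-diagonal) contribute the $\hat\mu_n(4)$ piece exactly as in \eqref{variance f_n}. The new point is the \emph{off-diagonal, non-correlation} terms, where $0<|\xi_1+\xi_2+\xi_3+\xi_4|$: these are precisely indexed by $\mathcal{Q}(4,n,\cdot)$, and $\widehat{\chi_{B(s)}}(\eta)$ decays like $(s|\eta|)^{-3/2}$, so their total contribution is bounded using the emptiness of $\mathcal{Q}(4,n,K_n)$ — on our subsequence there are simply no tuples with $0<|\xi_1+\cdots+\xi_4|\leq K_n$, and for $|\eta|>K_n=n^{1/2}/(\log n)^{c(4)}$ together with $s>(\log n)^{A+\epsilon}/n^{1/2}$ one has $s|\eta|>(\log n)^{A+\epsilon-c(4)}\to\infty$, making the tail summable and $o(1)$ relative to the main term. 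The higher chaoses $q\geq 3$ are handled by the same uniform bound on the full nodal length as in \cite{MPRW, BMW}, giving item (2).

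Item (3) is then a soft consequence: once $\Var(\mathcal{L}(f_n,s))$ and $\Var(\mathcal{L}(f_n))$ both have the same $(1+\hat\mu_n(4))/512\cdot n/N^2$ shape (with the $(\pi s^2)^2$ rescaling), one shows the fourth-chaos projections of $(\pi s^2)^{-1}\mathcal{L}(f_n,s)$ and $\mathcal{L}(f_n)$ are asymptotically proportional in $L^2$ — their difference is again a sum over lattice tuples weighted by $\widehat{\chi_{B(s)}}(\eta)-(\pi s^2)\mathbf{1}_{\eta=0}$, which is supported on $\eta\neq 0$ hence on quasi-correlations, hence $o(1)$ on our subsequence — and since both random variables are dominated by their fourth chaos, Cauchy–Schwarz upgrades this to $\Corr\to 1$, uniformly in $s>(\log n)^{A+\epsilon}/n^{1/2}$ because every estimate above was uniform in that range.

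The main obstacle I expect is \textbf{not} the chaos machinery (which is essentially transcribed from \cite{BMW, MPRW}) but the \emph{uniformity in $s$} combined with getting the sharp constant $A=\tfrac{29}{6}\log 2$: one must make every error term in the fourth-chaos variance computation — the Fourier tail of $\chi_{B(s)}$, the count of $6$-tuples of lattice points coming from cross terms in the $L^2$ expansion, and the contribution of chaoses $q\geq 6$ — decay at a rate governed by the \emph{worst} $c(l)$ that appears, $l\leq 6$, and verify that $c(6)$ (or whichever index is binding after the $L^2$ expansion squares the $4$-chaos) equals exactly $\tfrac{29}{6}\log 2$ once the recursion $c(l)=l\log 2/2+c(\lfloor l/2\rfloor)+\epsilon$ is unrolled. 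Pinning down precisely which $l$ is binding, and confirming the arithmetic-function input $N(n)\asymp(\log n)^{\log 2/2\pm\epsilon}$ (Lemma \ref{ERKA}) does not degrade the exponent, is the delicate accounting that the rest of the paper must carry out.
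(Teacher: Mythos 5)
Your sketch of items (2) and (3) is essentially the route the paper takes (the paper uses the approximate Kac--Rice expansion of \cite{BMW}, Lemma \ref{Kac-Rice} together with Lemmas \ref{moments} and \ref{calculations}, rather than a Wiener chaos decomposition, but the mechanism you describe --- diagonal tuples give the main term, off-diagonal tuples are killed by the emptiness of $\mathcal{Q}(l,n,\cdot)$ for $l=2,4,6$ plus Bessel decay, and (3) follows from the exact covariance identity $\Cov(\mathcal{L}(f_n,s),\mathcal{L}(f_n))=(\pi s^2)^2\Var(\mathcal{L}(f_n))$ --- is the same). For the record, the constant you left open is pinned down by requiring $3\bigl(A-c(6)\bigr)\geq \log 2$ with $c(6)=\tfrac92\log 2$, giving $A=\tfrac{29}{6}\log 2$.

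The genuine gap is item (1). You propose to intersect three ``density one'' sets, one of which is ``the set on which $\{\hat\mu_n(4)\}$ has $[0,1]$ in its accumulation set''. That is not a property of individual integers $n$, and it cannot be arranged by a density argument: for almost all $n\in S$ the measure $\mu_n$ equidistributes, so $\hat\mu_n(4)\to 0$ along any density one subsequence chosen blindly; the integers realizing a prescribed accumulation point $w\neq 0$ form a density \emph{zero} set. On such sparse sets neither Theorem \ref{theorem 1} nor the Bombieri--Bourgain asymptotics \eqref{3} gives any information (both are ``almost all $n$'' statements), so a priori none of the $n$ with $\hat\mu_n(4)\approx w$ need satisfy the variance asymptotics of item (2), and the statement could fail for the subsequence you build. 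This is exactly the issue the paper flags in the introduction and resolves with Proposition \ref{limit point}: one \emph{explicitly constructs} integers $n=p^m q$, choosing the angles of the Gaussian primes via Kubilius' theorem (Lemma \ref{Kubilius}), verifying \eqref{3} through Lemma \ref{heavy} (Evertse--Schlickewei--Schmidt, since $m\to\infty$), and ruling out small quasi-correlations of any fixed length by lower-bounding the exponential sums $F_k(\theta_p)$ with Remez' inequality (Lemma \ref{NT}); these density zero sequences, realizing every $w\in[0,1]$, are then adjoined to the density one set furnished by Theorem \ref{theorem 1}. Without this construction --- which is the main new content of the proof of Theorem \ref{thm 2} beyond \cite{BMW} --- item (1) together with the uniform statements (2) and (3) on the same subsequence does not follow.
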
 

Given a sequence of $n\in S$ that satisfies the conclusion of Theorem \ref{theorem 1} and \eqref{3}, part $(2)$ and part $(3)$ of Theorem \ref{thm 2} follow directly using the techniques in \cite{BMW}. However, \textit{a priori}, for any said sequence, $\hat{\mu_n}(4)$ might have only one accumulation point. To rule this out, we explicitly construct sequences of $n\in S$, satisfying \eqref{3} and the conclusion of Theorem \ref{theorem 1}, for which we can control the distribution of lattice points on  $\sqrt{n}S^1$. Benatar, Marinucci and Wigman's argument relies on density estimates, thus our result seems to be the first to give explicit examples of such sequences in the literature.
 
	Finally, we show that there exists some $B>0$ such that $\mathcal{L}(f_n,s)$ behaves like the nodal length of BRW for $s<(\log n)^{B} \cdot n^{-1/2}$. This provides a lower bound for the phase-transitions and shows that the behaviour of the nodal length changes at some logarithmic power above the Planck scale. 
	\begin{thm}
		\label{thm 3}Let $B=\frac{1}{84}\log \frac{\pi}{2}=0.0053...$, $\epsilon>0$ and $R>1$. Moreover, let $F_n(\cdot)=f_{n}(R\cdot /\sqrt{n})$ and $F_{\mu}(\cdot)=f_{\mu}(R\cdot)$, where $f_{\mu}$ is the BRW. There exists a density one subsequence of $n\in S$ such that for all $R\leq \log n^{B -\epsilon}$ and all fixed $t\in(-\infty, \infty)$, we have 
		\begin{align}
		&\left| \mathbb{E}[\exp\left( it\mathcal{L}(F_n)\right)]- \mathbb{E}[\exp\left(it \mathcal{L}(F_{\mu})\right)] \right|\longrightarrow 0 & n\rightarrow \infty. \nonumber
		\end{align} 
	\end{thm}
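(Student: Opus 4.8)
The plan is to realise both $\mathcal{L}(F_n)$ and $\mathcal{L}(F_\mu)$ as the same nonlinear nodal-length functional applied to two Gaussian fields on the \emph{fixed} ball $B(1)$, and to show that, along the density one subsequence, these two fields become $C^k$-close for a suitable fixed $k$ --- so closely that the resulting nodal lengths differ by $o(1)$ in $L^1$; since $|e^{it\cdot}|\le 1$ this gives the convergence of characteristic functions. Concretely, $F_n$ is the centred stationary Gaussian field on $\mathbb{R}^2$ with spectral measure the dilation of $\mu_n$ by $R$, while $F_\mu$ has spectral measure the dilation of $\mu$ by $R$, so their covariance kernels on $B(1)\times B(1)$ are
$$ r_n(x,y)=\int_{S^1} e\big(R\langle\theta,x-y\rangle\big)\,d\mu_n(\theta),\qquad r_\mu(x,y)=\int_{S^1} e\big(R\langle\theta,x-y\rangle\big)\,d\mu(\theta), $$
with $\|x-y\|\le 2$ throughout. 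I would first record the (by now classical) fact, used in \cite{BMW}, that the law of the nodal length of a smooth non-degenerate Gaussian field on $B(1)$ depends continuously, in a quantitative Kac--Rice sense, on the field in $C^k(B(1))$ for $k$ fixed. Coupling $F_n$ and $F_\mu$ on a common probability space through a shared white noise, $\mathbb{E}\|F_n-F_\mu\|_{C^k(B(1))}^2$ is controlled by $\|r_n-r_\mu\|_{C^{2k}(B(1)^2)}$; on the high-probability event that $F_\mu$ has no near-critical zeros in $B(1)$ one gets $|\mathcal{L}(F_n)-\mathcal{L}(F_\mu)|\ll R^{O(k)}\|F_n-F_\mu\|_{C^k(B(1))}$, the polynomial factor in $R$ accounting for the $\approx R$ length of the nodal set; on the complementary event one uses $|e^{it\cdot}|\le 1$, contributing $o(1)$ since that event has probability $o(1)$ and $R$ grows only polylogarithmically.

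The second step is Fourier-analytic and routine. Expanding $e(R\langle\theta,z\rangle)$ in the angular variable produces Bessel functions $J_j(2\pi R\|z\|)$, and using the $\tfrac{\pi}{2}$-rotation symmetry of $\mu_n$ (which forces $\hat\mu_n(j)=0$ unless $4\mid j$) one obtains, for every $A>0$,
$$ \big|\partial_x^\alpha\partial_y^\beta\big(r_n-r_\mu\big)(x,y)\big|\ \ll\ R^{|\alpha|+|\beta|}\Big(\sum_{1\le |j|\le CR,\ 4\mid j}|\hat\mu_n(j)|\ +\ O_A\big(R^{-A}\big)\Big), $$
the error term coming from the super-polynomial decay of $J_j$ once $|j|\gg R$. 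Hence, since $R\le(\log n)^{B-\epsilon}$, Theorem \ref{thm 3} follows once it is shown that, for a density one subsequence of $n\in S$,
$$ \max_{1\le |j|\le (\log n)^{B-\epsilon},\ 4\mid j}|\hat\mu_n(j)|\ =\ O\big((\log n)^{-\delta}\big) $$
for a suitable $\delta>0$ depending only on $B,\epsilon$ and on the fixed number of derivatives $k$ from the first step.

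This last estimate is the number-theoretic heart, and I expect it to be the main obstacle. Writing the part of $n$ built from primes $p\equiv 1\pmod 4$ as $\prod p^{a_p}$ and letting $\theta_p=\arg\pi_p$ for the Gaussian prime $\pi_p\mid p$, one has the multiplicative formula $\hat\mu_n(4k)=\prod_p U_{a_p}(\cos 2k\theta_p)/(a_p+1)$ in terms of Chebyshev polynomials $U_{a_p}$; thus each $\hat\mu_n(4k)$ is a product of $\omega_1(n)\approx\tfrac12\log\log n$ factors of modulus $\le 1$, and typically of size $(\log n)^{-\delta_0+o(1)}$. One quantifies this by bounding the high moments $\sum_{n\le x,\,n\in S}|\hat\mu_n(4k)|^{2m}$ through a Selberg--Delange / Hecke-equidistribution analysis with power-of-$\log$ saving, uniformly in $k$, and then union-bounding over the $\ll(\log x)^{B}$ relevant values of $k$; the resulting admissible range for $B$, optimised against the number $k$ of covariance derivatives needed in the first step, is precisely what produces the explicit value $B=\tfrac1{84}\log\tfrac{\pi}{2}$. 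Alternatively, and more in the spirit of the explicit constructions used for Theorem \ref{thm 2}, one may exhibit such a density one subsequence directly, taking $n$ squarefree with prime factors whose Gaussian angles $\{\theta_p\}$ form a sufficiently well-equidistributed finite set, so that $\prod_p|\cos 2k\theta_p|$ is seen to be small for every $1\le k\le(\log n)^{B}$.

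Finally one checks the mild uniformity needed in the first step: the comparison fields $F_\mu$, being translation invariant, satisfy the Kac--Rice non-degeneracy hypotheses with constants degrading at most polynomially in $R\le(\log n)^{B-\epsilon}$, so the stability estimate and the bound on the bad event hold with the required uniformity; then one assembles the pieces. Besides the number-theoretic input, the delicate point will be making the nodal-length stability estimate genuinely quantitative and uniform over the (slowly) growing family of fields indexed by $R$, since this is what fixes $k$ and hence pins down the exponent $B$.
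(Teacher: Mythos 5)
Your overall architecture (couple the two fields on $B(1)$, use a quantitative stability estimate for the nodal length off a bad event of small probability, then compare characteristic functions via $|e^{ita}-e^{itb}|\ll t|a-b|$) is the same as the paper's. But the step you treat as routine is exactly where the real difficulty lies: you assert that a ``shared white noise'' coupling gives $\mathbb{E}\|F_n-F_\mu\|_{C^k(B(1))}^2\lesssim \|r_n-r_\mu\|_{C^{2k}}$. This is not a routine fact, and as stated it is unjustified: $\mu_n$ is purely atomic while $\mu$ is Lebesgue, so the two fields cannot literally be driven by the same white noise through spectral densities, and for Gaussian vectors/fields the existence of a coupling whose $L^2$ (let alone $C^k$) distance is controlled by the \emph{difference of covariances} is false in general --- optimal Gaussian couplings are governed by differences of square roots of covariances, which are only H\"older-$1/2$ in the covariance near degeneracy. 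The existence of a good coupling here is precisely the content of the recent Beliaev--Maffucci theorem \cite[Theorem 5.5]{BM}, which the paper invokes as Lemma \ref{lemma 3}; note that even that result gives a weaker bound than you assume, namely $\|F_n-F_\mu\|_{C^2}\ll R^2\log R\cdot(\log n)^{-2\kappa/3+\epsilon}$ in terms of the spectral-measure discrepancy of Theorem \ref{Erdos-Hall}, with an exponent loss (from $\kappa$ to $2\kappa/3$) that, combined with the $R^{28}(\log R)^{17}$ coming from the stability analysis (Lemma \ref{stability} with $\beta\asymp R^{-4}(\log R)^{-3}$, $M\asymp R^3\log R$), is exactly what produces $B=\kappa/42=\tfrac{1}{84}\log\tfrac{\pi}{2}$. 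Your claim that your optimisation yields ``precisely'' this value is therefore unsupported: with your (unproved) coupling bound and unspecified $k$, $\delta$, the exponent would come out differently. Without either proving a quantitative coupling or citing \cite{BM}, the first step of your argument does not go through.

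Conversely, the part you flag as ``the number-theoretic heart'' is the easy part and needs none of the Selberg--Delange/moment machinery you propose: since $\hat\mu(j)=0$ for $j\neq 0$, partial summation against the discrepancy bound of Theorem \ref{Erdos-Hall} gives $|\hat\mu_n(j)|\ll |j|\,(\log n)^{-\kappa+\epsilon}$ for the same density one subsequence, which is more than enough uniformly in $|j|\ll R\le(\log n)^{B-\epsilon}$. The remaining ingredients you would still need to supply --- the a priori $C^2$ bound $\|F_n\|_{C^2}\ll R^3\log R$ off an exponentially small event, and the anticoncentration/net argument quantifying the ``no near-critical zeros'' event with an explicit $\beta$ --- are carried out in the paper as Lemmas \ref{lemma2} and \ref{lemma 1}; your sketch gestures at them but fixes no exponents, and without explicit exponents there is no value of $B$ at the end.
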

 As mentioned in section \ref{phase-transitions ARW}, $\Var[\mathcal{L}(f_{\mu})]$ was computed by Berry \cite{B2} and Wigman \cite{W} found the variance for the nodal length of random spherical harmonics on the two dimensional sphere. The law, for random spherical harmonics, was discovered by Marinucci, Rossi and Wigman \cite{MPW}. Subsequently, Nourdin, Peccati and Rossi \cite{NPR} found the law of $\mathcal{L}(f_{\mu})$ to be Normal, once appropriately normalised. 
\subsection{Intermediate phase}  
\label{ARW and correlations}
Let $f_n$ be as in \eqref{function} and $s>0$ be some parameter. Thanks to the Kac-Rice formula, moments of $\mathcal{L}(f_n,s)$ can be expressed in terms of the \textit{restricted}  moments of covariance function
\begin{align}
 r_n(x,y)= \int_{S^1} e(\langle x-y, \lambda\rangle)d\mu_n(\lambda) \label{covariance}
 \end{align}
 where $S^1\subset \mathbb{R}^2$ is the unit circle. That is, for $l\geq 2$ we are interested in asymptotically evaluating 
\begin{align}
\int_{B(s)} r(x)^ldx= \frac{1}{N^l}\sum_{\xi_1,...,\xi_l} \int_{B(s) }e(\langle \xi_1+...+\xi_l,x \rangle)dx \nonumber
\end{align}
where $B(s)$ is the ball centred at $0$ of radius $s$. Separating the terms with $\xi_1+...+\xi_l=0$, we obtain
\begin{align}
\int_{B(s)}r(x)^l dx= \pi s^2\frac{\#\mathcal{S}(l,n)}{N^l} + \frac{2\pi s^2}{N^l}\sum_{|\xi_1+...+\xi_l|>0}\frac{J_1(s|\xi_1+...+\xi_l|)}{s|\xi_1+...+\xi_l|} \label{6}
\end{align}
where $\mathcal{S}(l,n)$ is as in  \eqref{1} and $J_1(\cdot)$ is the Bessel function of the first kind.

Spectral correlations have been studied by Bombieri and Bourgain \cite{BB} who showed that 
\begin{align}
	\#\mathcal{S}(2l,n)= \frac{(2l)!}{l! \cdot 2^l}N^l(1+o(1))  \label{3}
\end{align}
for almost all $n\in S$, while $\mathcal{S}(l,n)= \emptyset$ for $l$ odd by congruence obstruction modulo $2$,  see also \cite{KKW} and section \ref{NT background} below for a more detailed discussion. Since $J_1(T)\ll T^{1/2}$ for $T$ large enough, the second term in \eqref{6} would asymptotically vanish if $\mathcal{Q}(l,n,s^{-1})=\emptyset$, which is in particular the case if $s=O(1)$. Hence, the phase-transition for $\mathcal{L}(f_n,s)$ can be compared to the change in the asymptotic law of the second term in \eqref{6}.

In light of Theorem \ref{thm random}, we expect that $A_0=2\log 2=1.3862...$. Indeed, thanks to the calculations in \cite{BMW}, we need to control the asymptotic in \eqref{6} only for $l=2,4,6$. Thus, given some $l\geq 2$, Theorem \ref{thm random}, with $\alpha=N^{-l/2}$, suggests that there are at most $o(N^{l/2})$ tuples $(\xi_1,...,\xi_l)$ such that $s|\xi_1+...\xi_l|< N^{l/6}$, where $s=N^{2l/3}/n^{1/2}$. For the remaining $l$-tuples, we have that $(J_1(s|\xi_1+...+\xi_l|)/s|\xi_1+...+\xi_l|)^2=o(N^{l/2})$. Thus, using the bound $J_1(T)/T=O(1)$ for the former set of tuples and the bound $J_1(T)/T\ll T^{-3/2}$ for the latter,  the  second term in \eqref{6} is negligible compared to the first one. Taking $l=6$ and bearing in mind that $N\asymp \log(n)^{\log 2/2\pm\epsilon}$, we obtain $s= (\log n)^{2\log 2 +\epsilon}/n^{1/2}$. 

\subsection{Related work}
Bourgain and Rudnick \cite{BR} first studied length $2$ quasi-correlations and showed that  $\mathcal{Q}(2,n, n^{1/2-\epsilon})=\emptyset$ for a density one subset of $n\in S$. Subsequently, Granville and Wigman \cite{GW} showed that
\begin{align}
\#\{n\in S: n\leq X \hspace{2mm} \text{and} \hspace{2mm} \mathcal{Q}(2,n,K) \neq \emptyset \}= C\sqrt{X}K\left( (2\log K)^{1/2}+O(1)\right), \nonumber
\end{align}
for some explicit $C>0$, this implies that  
\begin{align}\label{1.1}
\mathcal{Q}(2,n, n^{1/2}/\Psi(n)\log n)=\emptyset
\end{align}
for almost all $n\in S$ and any function $\Psi(n)\rightarrow \infty$ as $n\rightarrow \infty$. Theorem \ref{theorem 1} refines \eqref{1.1} to $\mathcal{Q}(2,n, n^{1/2}/\log n^{\log 2+\epsilon})=\emptyset$. This is also directly related to the question of estimating the number of lattice points $\xi_i$ on an arc of length $n^{\delta}$ for $\delta>0$. Cilleruelo and Cordoba \cite{CC} showed that there are at most $O_\delta(1)$ such lattice points if $\delta<1/4$ and Cilleruelo and Granville \cite{CG} conjectured that this remains true for every $\delta<1/2$.  Theorem \ref{theorem 1} implies that there are at most $2$ lattice points on any arc of length at most $n^{1/2}/\log n^{\log 2+\epsilon}$ on a generic circle of radius $\sqrt{n}$. Furthermore, Benatar, Marinucci and Wigman \cite{BMW} showed that $\mathcal{Q}(l,n, n^{1/2-\epsilon})=\emptyset$ for almost all $n\in S$.  Theorem \ref{theorem 1} not only refines their bound, but also gives an explicit dependence of $K$ on $l$ which is essential in the study of toral eigenfunctions at small scales.

The proof of Theorem \ref{thm 3} relies on the quantitative convergence of the spectral measure $\mu_n$ to $\mu$, given by Erd\"{o}s-Hall \cite{EH} and K\'{a}tai- K\"{o}rnyei \cite{KK}, see also Theorem \ref{Erdos-Hall} below, and a recent result of Beliaev-Maffucci \cite{BM}. In this scenario, the Continuous Mapping Theorem suggests that convergence of the spectral measure, in the weak sense, implies convergence in distribution  of the nodal length. This principle has already been rigorously implemented by Granville and Wigman \cite{GW2} for trigonometric polynomials and by Todino \cite{To} for spherical harmonics in the two dimensional sphere. The author was also recently communicated that Dierickx, Nourdin, Peccati and Rossi \cite{DNPR} showed that the said principle applies in a quite general scenario: They showed, from the appropriate convergence of covariances, the convergence to $\mathcal{L}(f_{\mu})$, in mean square and distribution, of the nodal length of Gaussian  monochromatic random waves on Riemann surfaces without conjugate points. As a consequence of their method, with the notation of Theorem \ref{thm 3}, they found the variance and the distribution of $\mathcal{L}(F_n)$ to agree with $\mathcal{L}(F_{\mu})$ for $B= \frac{1}{18}\log (\pi/2)$; thus giving a sharper, value for $B$ in our Theorem \ref{thm 3}. 

Finally, in light of the techniques in \cite{GW2}, it is conceivable that the value of $B$ can be increased further, to maybe $B=\log(\pi/2)/2=0.2257...$, using Crofton's formula and Hurwitz's theorem. Nevertheless, since this new value for $B$ would still be far from our expected $A_0$, we opted for a short and, in our view, elegant proof based on the stability of the nodal set, as in \cite{NS}, and a quantitative version of the Continuous Mapping Theorem. 
\subsection{Notation}
Let $u \rightarrow \infty$ be some parameter, we say that the quantity $X=X(u)$ and $Y=Y(u)$   satisfy $X\ll Y$ , $X\gg Y$ if there exists some constant $C$, independent of $u$, such that $X\leq C Y$ and $X\geq CY$ respectively. If $X\ll Y$ and $Y\ll X$, we write $X \asymp Y$. We also write $O(X)$ for some quantity bounded in absolute value by a constant times $X$ and $X=o(Y)$ if $X/Y\rightarrow 0$ as $u\rightarrow \infty$, in particular we denote by $o(1)$ any function that tends to $0$ (arbitrarily slowly) as $u\rightarrow \infty$. We denote by $B(s)$  the (open) ball centred at $0$ and by $\overline{B}(s)$ the closure of $B(s)$. When the specific radius is unimportant, we simply write the ball as $B$ and $\frac{1}{2}B$ for the concentric ball with half the radius. Moreover, for a positive integer $n$, we denote by $\omega(n)$ the number of its prime factors without multiplicity. Furthermore, given some $k\geq 0$ and some $k$-times differentiable function $f:B\rightarrow \mathbb{R}$, we denote by $||f||_{C^k(B)}= \sum_{m=0}^k \sup_{x\in B} |f^{(m)}(x)|$, where $f^(m)(x)$ is the $m$-th derivative.  Finally, we denote by $\Omega$ an abstract probability space where every random object is defined.

\section{Number theoretic background}
\subsection{An equivalent formulation of Theorem \ref{theorem 1}}
\label{NT background}
Given $n\in S$, we can express the representations $\xi_i$ of $n$ as products of prime ideal in $\mathbb{Z}[i]$. Let $n=2^{\alpha_2}\prod_{k} p_k^{\alpha_k}\prod_{v}q_v^{\beta_v}$, where $p_k$ and $q_s$ are primes $p\equiv 1 \pmod 4$ and $q\equiv 3 \pmod 4$ and $\alpha$'s and $\beta$'s are positive integers. Let $(n)\subset \mathbb{Z}[i]$ be the ideal generated by $n$, then, by unique factorisation of ideals in $\mathbb{Z}[i]$, bearing in mind that primes $p\equiv 1 \pmod 4$ split and primes $q\equiv 3 \pmod 4$ are inert, we have 
\begin{align}
(n)= Z^{2\alpha_2}\prod_{k} (\mathcal{P}_k\overline{\mathcal{P}_k})^{\alpha_k}\prod_{v} \mathcal{Q}_v^{\beta_v} \nonumber
\end{align}
where $Z$ is an ideal above $2$,  $\mathcal{P}_k$ an ideal above $p_k$ and  $\mathcal{Q}_v$ an ideal above $q_v$. Thus, if $(n)=(x+iy)(x-iy)$ for some $x,y \in \mathbb{Z}$, then 
\begin{align}
(x+iy)= Z^{\alpha_2}\prod_k\mathcal{P}_k^{\gamma_{kj}}\overline{\mathcal{P}_k}^{\alpha_k-\gamma_{kj}}\prod_{v} \mathcal{Q}_v^{\beta_v/2}.\label{7}
\end{align} 
for some $0\leq \gamma_{kj}\leq \alpha_k$. It follows that  the $\beta$'s must be even and  the representations $\xi_i$ of $n$ are in one to one correspondence with ideal of the form \eqref{7}.  Therefore, taking into account the symmetries $\xi\rightarrow -\xi$ and $x+iy \rightarrow y+ix$,  we have
\begin{align}
N(n)= 4\prod_{k}(\alpha_k+1)\ll \exp \left( O\left( \frac{\log  n}{\log\log n}\right)\right) \label{divisor bound}
\end{align}
where the inequality follows from the divisor bound. Moreover, by \eqref{7}, we see that the factor $Z^{\alpha_2}\prod_{v}\mathcal{Q}_v^{\beta_v/2}$ is common to every representation. This has the effect of rotating the lattice points, but it does not affect their spacial distribution. Thus, it does not effect the set of solutions to \eqref{4}. Hence, we can restate Theorem \ref{theorem 1} as follows
\begin{thm}
	\label{theorem 3}
	Let $S':=\{n\in S: p|n \hspace{2mm} \text{then} \hspace{2mm} p\equiv 1 \pmod 4 \}$, $l \geq 2$ be an integer, $\epsilon>0$ and $c(l,\epsilon)=c(l)$ be as in Theorem \ref{theorem 1}. Then there exists a density one subset of $n\in S'$ such that
$$
\mathcal{Q}(l,n, n^{1/2} / (\log n)^{c(l)})= \emptyset. 
$$
\end{thm}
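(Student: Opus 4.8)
The plan is to reduce the statement to a counting problem about lattice points on circles whose prime factors all split in $\mathbb{Z}[i]$, and then run an iterative (dyadic) argument on the length $l$, mirroring the recursive definition of $c(l)$. First I would observe that Theorem \ref{theorem 3} is equivalent to Theorem \ref{theorem 1}: the common factor $Z^{\alpha_2}\prod_v \mathcal{Q}_v^{\beta_v/2}$ in \eqref{7} rotates every lattice point by the same angle, so it leaves all differences $\xi_{i_1}+\dots+\xi_{i_l}$ invariant in modulus; hence $\mathcal{Q}(l,n,K)=\emptyset$ for $n$ if and only if it is empty for the ``square-free-in-the-split-primes'' core of $n$, and passing to almost all $n\in S$ corresponds to passing to almost all $n\in S'$ after accounting for the density of $S$ versus $S'$. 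So from now on $n=\prod_k p_k$ (or with bounded exponents; the generic case has $\omega(n)\asymp\log\log n$ and all exponents $1$).

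Next I would set up the base cases $l=2$ and $l=3$. For $l=2$, a pair $\xi_1,\xi_2$ with $0<|\xi_1+\xi_2|\le K$ means two lattice points on $\sqrt n\,S^1$ within angular distance $\asymp K/\sqrt n$ of being antipodal; writing $\xi_j$ as Gaussian integers $z_j$ with $|z_j|^2=n$, the quantity $\xi_1+\xi_2$ (after the symmetry $\xi\to-\xi$) corresponds to $|z_1-z_2|$ or $|z_1-\bar z_2|$ being small, i.e.\ $z_1\bar z_2$ (a Gaussian integer of norm $n^2$) lies near the positive or negative real axis. The number of $n\le X$ in $S'$ admitting such a near-degenerate pair is governed by counting Gaussian integers of norm $n^2$ close to an axis, which by the Erd\H{o}s--Hall / K\'atai--K\"ornyei circle of ideas (and the quantitative convergence in Theorem \ref{Erdos-Hall}, invoked later) is $o(\#\{n\le X\})$ once $K\le \sqrt n/(\log n)^{\log 2+\epsilon}$; the exponent $\log 2$ here is exactly the typical size of $\log N(n)=\log(4\cdot 2^{\omega(n)})/\!\log\log\log\dots$, reflecting that a near-antipodal pair forces an atypically large or structured $r_2(n)$. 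The case $l=3$ is similar but one has three points summing to something small; by fixing $\xi_3$ one is reduced to a length-$2$ problem on a shifted circle, which accounts for the $3\log2/2$ (a factor $\sqrt 2$ worse because one ``uses up'' half of the lattice-point budget to choose $\xi_3$).

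The inductive step is the heart of the argument and follows the recursion $c(l)=l\log2/2+c(\lfloor l/2\rfloor)+\epsilon$. Given a solution $(\xi_1,\dots,\xi_l)\in\mathcal{Q}(l,n,K)$, split the sum into two blocks of lengths $\lfloor l/2\rfloor$ and $\lceil l/2\rceil$, and set $\eta=\xi_1+\dots+\xi_{\lfloor l/2\rfloor}$. Either $\eta=0$, in which case the first block is a genuine spectral correlation of even length (so $\lfloor l/2\rfloor$ is even and we are counting configurations governed by Bombieri--Bourgain, contributing a factor $N^{\lfloor l/2\rfloor/2}\asymp(\log n)^{(\log2/2)\lfloor l/2\rfloor/2}$ worth of choices), or $0<|\eta|\le $ something, and a Cauchy--Schwarz / pigeonhole split on the size of $|\eta|$ reduces matters: if $|\eta|$ is itself small then $(\xi_1,\dots,\xi_{\lfloor l/2\rfloor})$ is a shorter quasi-correlation, on which we apply the inductive hypothesis with parameter controlled by $c(\lfloor l/2\rfloor)$; the remaining $\lceil l/2\rceil$ points must then sum to $-\eta+O(K)$, a fixed target, and choosing them costs at most $N^{\lceil l/2\rceil}$, which on multiplying by the ``probability'' that such a target is hit contributes the extra $l\log2/2$. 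Summing the union bound over the $O(X)$ integers and over the dyadic ranges of $|\eta|$, the total count of bad $n\le X$ is $o(X)$ precisely when $K\le \sqrt n/(\log n)^{c(l)}$ with $c(l)$ as defined, the $+\epsilon$ absorbing the dyadic losses and the $\log\log n$ factors from the divisor bound \eqref{divisor bound}.

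The main obstacle I anticipate is making the inductive bookkeeping uniform: one must ensure that when the first block degenerates to a shorter quasi-correlation, the parameter $K'$ fed into the inductive hypothesis is still of the admissible form $\sqrt n/(\log n)^{c(\lfloor l/2\rfloor)}$ \emph{for almost all $n$}, rather than only in measure after the union bound — this requires first throwing away a density-zero set of $n$ with atypically large $\omega(n)$ or atypically many lattice points (so that $N(n)\le(\log n)^{\log2/2+\epsilon}$, cf.\ Lemma \ref{ERKA}), and only then running the counting; and one must track that the $\epsilon$'s do not accumulate past a fixed bound across the $O(\log l)$ levels of recursion, which is fine since each level contributes a single $\epsilon$ that can be taken as $\epsilon/2^{\text{level}}$. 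A secondary technical point is the quantitative input on lattice points near an arc needed for the base case, which I would quote from Erd\H{o}s--Hall and Cilleruelo--Cordoba as recalled in the introduction.
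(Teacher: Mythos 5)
Your reduction of Theorem \ref{theorem 1} to Theorem \ref{theorem 3} and your instinct that the recursion $c(l)=l\log 2/2+c(\lfloor l/2\rfloor)+\epsilon$ should be mirrored by a halving argument are both in line with the paper, but the core counting mechanism is missing, and the substitutes you propose would not work. For the base case $l=2$ you appeal to Erd\H{o}s--Hall / K\'atai--K\"ornyei equidistribution (Theorem \ref{Erdos-Hall}); that result gives discrepancy $(\log n)^{-\kappa+\epsilon}$ with $\kappa=\tfrac12\log\tfrac\pi2\approx 0.23$, which is far too coarse to rule out two lattice points at angular distance $(\log n)^{-\log 2-\epsilon}\approx(\log n)^{-0.69}$, and in any case a discrepancy bound for a fixed typical $n$ cannot preclude a single near-degenerate pair; what is needed is a count, over $n\le X$, of the circles admitting such a pair. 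Likewise, in your inductive step the phrase ``multiplying by the probability that such a target is hit'' has no content for a fixed integer $n$: nothing is random, and bounding the number of $\lceil l/2\rceil$-tuples summing to a prescribed small neighbourhood is essentially the problem you are trying to solve, so the argument is circular at exactly the point where the saving must come from.

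The paper's proof supplies this mechanism arithmetically. One restricts to ``minimal'' bad $n$ (the sets $\mathcal{F}_r(X,l,\epsilon)$: $n$ has a length-$l$ quasi-correlation but no proper divisor does), discards the density-zero set violating \eqref{assumptions}, and isolates the largest prime factor $p_r\gg\exp((\log X)^{1/3})$. Writing the quasi-correlation as $0<|\mathcal{P}_r a-\overline{\mathcal{P}_r}b|\le n^{1/2}/\Phi(n,l)$ with $a,b$ determined by the Gaussian primes above $p_1,\dots,p_{r-1}$ (at most $2^{O(lr)}\asymp N^{O(l)}$ choices, which is where the $l\log 2/2$ enters), the inequality forces $\Arg(\mathcal{P}_r)$ into an interval of length $\asymp\Phi(n,\lfloor l/2\rfloor)/\Phi(n,l)$, \emph{provided} $|a|$ or $|b|$ is not itself abnormally small; the induction hypothesis is used only here, applied to the divisor $n/p_r$ and summed over $p_r$ to control the exceptional $n$. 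The count of admissible $p_r$ with $\Arg(\mathcal{P}_r)$ in a short interval is then done by Kubilius' theorem (Lemma \ref{Kubilius}) on Gaussian primes in sectors, and Lemma \ref{contazzi} handles the sum over the remaining prime factors. It is this ``largest prime as the free variable, counted by Kubilius'' step -- absent from your proposal -- that converts the existence of a quasi-correlation into a density-zero event; without it, neither your base case nor your induction closes.
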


\subsection{Spectral correlations}
\label{Spectral correlations}
Recall that $\mathcal{S}(l,n)=\{(\xi_1,...\xi_l): \xi_1+...+\xi_l=0\}$ and that, by congruence obstruction modulo $2$, $\mathcal{S}(l,n)=\emptyset$ if $l$ is odd. If $l$ is even, we have the \textquotedblleft diagonal" solutions given by $\xi_1=-\xi_2$,..., $\xi_{l-1}=-\xi_l$, thus  $\mathcal{S}(l,n)\gg N^{l/2}$. For $l=2$  the only solutions are $\xi_1=-\xi_2$, thus $\mathcal{S}(2,n)=N$. For $l=4$ Zygmund \cite{Z} observed that the only solutions are $\xi_1=-\xi_2$ and $\xi_3=-\xi_4$, therefore
\begin{align}
&\mathcal{S}(4,n)= 3N^2 + O(N) & N \rightarrow \infty. \nonumber
\end{align}  
For $l=6$ Bourgain \cite[Theorem 2.2]{KKW} showed that $\mathcal{S}(6,n)=o(N^4)$. Subsequently, Bombieri and Bourgain \cite{BB} gave the bound 
\begin{align}
\mathcal{S}(6,n)\ll N^{7/2}. \label{2}
\end{align}
Finally,  using the deep work of  Evertse-Schlickewei-Schmidt \cite{ESS} on additive relations in multiplicative subgroups of $\mathbb{C}^{\star}$ of bounded rank, see \cite[Theorem 5]{BB} and \cite[Lemma 5]{BU}, we can explicitly construct sub-sequences of $n\in S$ which satisfy \eqref{3}:
\begin{lem}
	\label{heavy}
	Let $n=\prod_{i}^rp_i^{\alpha_i}\in S'$  and $l\geq 2$ be an even integer. If $\sum_i \log (\alpha_i+1)/r \rightarrow \infty$, then the number of solutions to \eqref{3} is 
	
	\begin{align}
	\frac{l!}{2^{(l/2)} \cdot (l/2)!}N^{l/2} + O(N^{\gamma l}) \nonumber
	\end{align}
	for some $0<\gamma<1/2$. 
\end{lem}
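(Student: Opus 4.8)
The plan is to count solutions to $\xi_1+\dots+\xi_l=0$ on the circle $|\xi_i|^2=n$ by passing to the Gaussian integers, as in section \ref{NT background}. Write each representation $\xi_i$ as (a unit times) an ideal of the form \eqref{7}, so that, after dividing out the common factor $Z^{\alpha_2}\prod_v\mathcal Q_v^{\beta_v/2}$ which merely rotates the configuration, each $\xi_i$ corresponds to a choice of exponents $(\gamma_{k,i})_{0\le\gamma_{k,i}\le\alpha_k}$ for the split primes $p_k$, $k=1,\dots,r$. Concretely $\xi_i = u_i\prod_k \mathcal P_k^{\gamma_{k,i}}\overline{\mathcal P_k}^{\alpha_k-\gamma_{k,i}}$ for a unit $u_i$. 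The equation $\sum_i\xi_i=0$ is then an additive relation among the $l$ elements $\xi_1,\dots,\xi_l$ of the multiplicative group $\Gamma\subset\mathbb C^\star$ generated by the units and the primes $\mathcal P_k,\overline{\mathcal P_k}$; this group has rank at most $2r+1$ (or $2r$ after normalising), but more importantly all the $\xi_i$ have the same absolute value $\sqrt n$, so the relevant relations live in a group of bounded rank once we fix ratios.

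First I would separate the ``diagonal'' solutions: by definition these are obtained by pairing up the indices $\{1,\dots,l\}$ into $l/2$ pairs $\{a,b\}$ with $\xi_a=-\xi_b$. The number of pairings is $l!/(2^{l/2}(l/2)!)$, and for each pairing the number of ways to choose the common values is $N^{l/2}(1+o(1))$ (choose $\xi_a$ freely for one index in each pair, $\xi_b$ being forced); overlaps between different pairings are $O(N^{l/2-1})$, so the diagonal contributes exactly the main term $\frac{l!}{2^{l/2}(l/2)!}N^{l/2}(1+o(1))$. It then remains to bound the number of \emph{non-degenerate} solutions — those for which no sub-sum $\sum_{i\in I}\xi_i$ with $\emptyset\ne I\subsetneq\{1,\dots,l\}$ vanishes — by $O(N^{\gamma l})$ for some $\gamma<1/2$, after which one handles solutions with a vanishing proper sub-sum by splitting $\{1,\dots,l\}=I\sqcup I^c$ and inducting on $l$ (the base cases $l=2,4$ being the classical observations of Zygmund recalled above, and the inductive step combining a non-degenerate count on a block with the full count on its complement).

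For the non-degenerate solutions I would apply the Evertse--Schlickewei--Schmidt bound \cite{ESS} on the number of non-degenerate solutions of $x_1+\dots+x_l=0$ with $(x_1,\dots,x_l)$ in a subgroup of $(\mathbb C^\star)^l$ of rank $\rho$: the number of projective solutions is bounded by a function of $l$ and $\rho$ only, in particular it is $O_l(\exp(O(\rho)))$. After dividing the equation by $\xi_l$, say, the tuple $(\xi_1/\xi_l,\dots,\xi_{l-1}/\xi_l,1)$ lies in the group generated by the ratios of the $\xi_i$, which in turn lies in the rank-$O(r)$ group generated by $\mathcal P_k/\overline{\mathcal P_k}$ (the unit and the $\overline{\mathcal P_k}^{\alpha_k}$ parts cancel in ratios); so each non-degenerate relation, \emph{as a projective point}, has at most $O_l(\exp(O(r)))$ possibilities. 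To turn a projective solution into an honest count of tuples $(\xi_1,\dots,\xi_l)$, note that fixing the projective point fixes the ratios $\xi_i/\xi_l$, and then $\xi_l$ ranges over at most $N$ values, giving at most $N\cdot O_l(\exp(O(r)))$ non-degenerate tuples. Since $n\in S'$ has $N=4\prod_k(\alpha_k+1)$ and the hypothesis $\frac1r\sum_k\log(\alpha_k+1)\to\infty$ forces $\log N/r\to\infty$, i.e. $r=o(\log N)$, we get $\exp(O(r))=N^{o(1)}$, hence the non-degenerate count is $N^{1+o(1)}$, which is $O(N^{\gamma l})$ for any fixed $\gamma>1/l$ and in particular for some $\gamma<1/2$ once $l\ge 3$; for $l=2$ the claim is the trivial $\mathcal S(2,n)=N$.

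The main obstacle I anticipate is bookkeeping the rank correctly and making the dependence on $r$ explicit: one must check that the subgroup containing the normalised tuples really has rank $O(r)$ (not $O(rl)$ or worse) so that the ESS bound $\exp(O(r))$ beats every power of $N$ under the hypothesis $r=o(\log N)$, and one must carefully organise the induction on degenerate solutions so that the error term stays $O(N^{\gamma l})$ with a single $\gamma<1/2$ uniform over the (finitely many) ways of partitioning the indices — here one uses that a vanishing sub-sum over a block of size $l'$ contributes, by the inductive hypothesis, $O(N^{\gamma l'})$ or, if $l'$ is odd, \emph{nothing} by the mod-$2$ obstruction, while the complementary block of size $l-l'$ contributes $O(N^{(l-l')/2})$, and $\gamma l' + (l-l')/2 < l/2$ provided $\gamma<1/2$ and $l'\ge 1$.
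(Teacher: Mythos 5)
Your proposal is correct and takes essentially the same route the paper relies on: the paper gives no self-contained proof of Lemma \ref{heavy}, deducing it from Evertse--Schlickewei--Schmidt via \cite[Theorem 5]{BB} and \cite[Lemma 5]{BU}, and your argument --- diagonal pairings for the main term, ESS applied to tuples of ratios lying in the group generated by the $\mathcal{P}_k/\overline{\mathcal{P}_k}$ (rank $O_l(r)$, with the hypothesis giving $r=o(\log N)$ and hence a non-degenerate count of $N^{1+o(1)}$), and a splitting/induction over vanishing sub-sums for the remaining degenerate solutions --- is precisely that argument. The only point to state more carefully is that in the final bookkeeping the block of size $l'$ must carry the \emph{non-diagonal} part of the inductive count (the full count on a block is of order $N^{l'/2}$, not $O(N^{\gamma l'})$), which your decomposition does allow since if both restrictions were diagonal the whole solution would be diagonal and already absorbed in the main term.
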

\subsection{Lattice points and geometry of numbers}
 In this section we collect some facts which will be used thorough the rest of the article. By  Landau's Theorem, see for example \cite[Theorem 14.2]{FI}, there exists some explicit constant $c>0$ such that
\begin{align}
\#\{n\in S': n\leq X\}= c \frac{X}{\sqrt{\log X}}(1+o(1)) \label{landau}
\end{align}

Thanks to (a weak version of) the Erd\"{o}s-Kac Theorem, see for example \cite[Part III Chapter 3]{T}, we have 
\begin{lem}[Erd\"{o}s-Kac] 
	\label{ERKA}
	Let $\epsilon>0$, then,	for a density one subset of $n\in S'$, we have 
	\begin{align}
	\frac{1}{2}\log\log n (1 -\epsilon )\leq	\#\{p|n:p\equiv 1 \pmod 4\}\leq  \frac{1}{2} \log\log n (1 +\epsilon ). \nonumber
	\end{align}
	where the primes are counted without multiplicity. In particular, via \eqref{divisor bound}, we have 
		\begin{align}
	N(n)=N \asymp (\log n)^{\frac{\log 2}{2}\pm\epsilon}. \nonumber
	\end{align}
\end{lem}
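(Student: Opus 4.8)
The plan is to combine the Erd\H{o}s--Kac-type concentration of the number of prime factors of a typical $n \in S'$ with the divisor-bound formula \eqref{divisor bound} for $N(n)$. For the first assertion, recall that if $n \in S'$ then $n = \prod_k p_k^{\alpha_k}$ with all $p_k \equiv 1 \pmod 4$; the quantity $\omega(n) = \#\{p \mid n : p \equiv 1 \pmod 4\}$ is an additive function, and the Erd\H{o}s--Kac theorem (in the weak, large-deviations-free form quoted from \cite{T}, applied to the sequence of $n \in S'$ which by \eqref{landau} has a well-behaved counting function) gives that $\omega(n) = \tfrac12 \log\log n\,(1+o(1))$ for a density one subset of $n \in S'$. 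The factor $\tfrac12$ arises because only the primes $\equiv 1 \pmod 4$ contribute, and these have density $\tfrac12$ among all primes, so the normal order of $\omega(n)$ restricted to $S'$ is half of the usual $\log\log n$. Writing this as a two-sided bound with an arbitrary $\epsilon > 0$ absorbed into the $(1+o(1))$ yields the displayed inequality.

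For the second assertion, I would feed the bounds on $\omega(n)$ into \eqref{divisor bound}, which states $N(n) = 4\prod_k(\alpha_k+1)$. On the one hand, $\prod_k(\alpha_k+1) \geq 2^{\omega(n)}$ since each $\alpha_k \geq 1$, so on the density one set above $N(n) \geq 4 \cdot 2^{\tfrac12\log\log n\,(1-\epsilon)} = (\log n)^{\tfrac{\log 2}{2}(1-\epsilon)}(1+o(1))$, giving the lower bound. On the other hand, for the upper bound one cannot merely use $\omega(n)$ small, since a single prime could occur to a large power; instead one uses that the full $n \leq X$ forces $\prod_k p_k^{\alpha_k} \leq X$ with all $p_k \geq 5$, hence $\prod_k(\alpha_k+1) \leq \prod_k 2^{\alpha_k} \leq 2^{\sum_k \alpha_k} \leq 2^{\log n / \log 5}$ — but this crude bound is too weak, so the real point is to restrict further to a density one subset on which $n$ is also free of large prime powers. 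More precisely, the set of $n \leq X$ divisible by $p^2$ for some $p > (\log X)^{C}$ has density $o(1)$, and on the complement $\sum_k (\alpha_k - 1) \ll \log\log n$, so that $\prod_k(\alpha_k+1) \leq \prod_k 2^{\alpha_k} = 2^{\omega(n) + \sum_k(\alpha_k-1)} \leq 2^{\tfrac12\log\log n\,(1+\epsilon)}$ after enlarging $\epsilon$ slightly. Intersecting the two density one sets gives $N(n) \asymp (\log n)^{\tfrac{\log 2}{2} \pm \epsilon}$.

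The main obstacle, such as it is, is the upper bound: one must be careful that the bound $N(n) = 4\prod_k(\alpha_k+1)$ is controlled not just by the number of prime factors but by their multiplicities, and so a separate (easy) sieve argument is needed to discard the sparse set of $n$ with an anomalously large prime-power divisor. Everything else is a direct substitution. I would remark that $\epsilon$ here is allowed to depend on the choice of density one subset, which is harmless for all later applications since in Theorems \ref{theorem 1}, \ref{thm 2}, \ref{thm 3} the exponent $\epsilon$ is already taken arbitrarily small and the relevant set is intersected with finitely many other density one sets.
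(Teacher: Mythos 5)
Your overall route — an Erd\H{o}s--Kac-type statement for $\omega(n)$ along the thin sequence $S'$ with normal order $\tfrac12\log\log n$, followed by substitution into \eqref{divisor bound} using $2^{\omega(n)}\leq\prod_k(\alpha_k+1)\leq 2^{\Omega(n)}$ — is exactly the argument the paper intends (the paper itself offers no proof beyond the citation to Tenenbaum, Part III Chapter 3, where the analogue of Erd\H{o}s--Kac for integers composed of primes $\equiv 1 \pmod 4$ is derived; your appeal to \eqref{landau} for the counting function of $S'$ is in the same spirit). The lower bound $N(n)\geq 4\cdot 2^{\omega(n)}$ is fine.

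There is, however, a genuine flaw in your treatment of the upper bound. You claim that after discarding the $n\leq X$ divisible by $p^2$ for some $p>(\log X)^{C}$, one has $\sum_k(\alpha_k-1)\ll\log\log n$ on the complement. That deduction fails: the complement only forces repeated primes to be small, and a small prime may still occur to a power as large as $\log X/\log 5$, so on the set you have defined $\sum_k(\alpha_k-1)$ can be of order $\log X$, not $\log\log X$. Of course such $n$ are rare, but that is precisely what your argument does not establish, since they have not been excluded. The standard fix is either (i) to discard instead the $n$ whose squarefull part exceeds a slowly growing threshold $T$ (this set has density $O(T^{-1/2})$, and on its complement $\sum_k(\alpha_k-1)\ll\log T$), or (ii) to bound the mean of $\Omega(n)-\omega(n)$ over $n\in S'$, $n\leq X$, by $O(X/\sqrt{\log X})$ and compare with \eqref{landau} — this is exactly the paper's Lemma \ref{comparison} — which gives $\Omega(n)-\omega(n)\leq g(n)$ for any $g\to\infty$ on a density one subset of $S'$; or (iii) simply to note that the weak Erd\H{o}s--Kac statement applies to $\Omega$ as well as $\omega$. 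With any of these, $N(n)\leq 4\cdot 2^{\Omega(n)}\leq(\log n)^{\frac{\log 2}{2}(1+\epsilon)}$ on a density one set, and the rest of your write-up, including the remark that $\epsilon$ and the finitely many density one sets can be intersected harmlessly, is correct.
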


As another consequence of (a slightly stronger version of) the Erd\"{o}s-Kac Theorem, we can also control the size of the prime in the factorisation of a generic integer $n\in S'$, see for example  \cite[Part III Chapter 3, Theorem 8 and Theorem 9]{T} for a standard derivation of the following fact from the Erd\"{o}s-Kac Theorem.
\begin{lem}
		\label{size of p_r}
	Let $n\in S'$ and let $p_1<p_2<...<p_r$ be its prime factors. Then for a density one subset of $n\in S'$ we have 
	\begin{align}
	\sup_{\log\log\log n<k< r} \left|\frac{2^{-1}\log\log p_k- k}{\sqrt{k\log k}}\right|\leq 3/2 .\nonumber
	\end{align}
	In particular, by Lemma \ref{ERKA}, we have 
	\begin{align}
	p_r \geq \exp( (\log n)^{1/3}). \nonumber
	\end{align}
\end{lem}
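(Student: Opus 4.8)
The plan is to deduce both assertions from the Erd\H{o}s--Kac Theorem applied in the "renormalized" form that controls not just the total number of prime factors but the growth rate of the $k$-th smallest prime. Concretely, for $n\in S'$ write $n=\prod_{i=1}^r p_i^{\alpha_i}$ with $p_1<p_2<\dots<p_r$, all $p_i\equiv 1\pmod 4$. The standard strong form of Erd\H{o}s--Kac (see \cite[Part III Chapter 3, Theorems 8 and 9]{T}) states that, after normalizing by $\tfrac12\log\log\log p_k$ and its square root, the quantity $\#\{p\le p_k:\ p\mid n,\ p\equiv 1\pmod 4\}=k$ behaves like a random walk: for a density-one subset of $n$, one has $|2^{-1}\log\log p_k-k|\le C\sqrt{k\log k}$ uniformly in the stated range of $k$, by the law of the iterated logarithm type estimate that accompanies Erd\H{o}s--Kac. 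The constant $3/2$ in the statement is simply a convenient explicit choice of $C$ valid once $k>\log\log\log n$; I would cite \cite{T} for the precise quantitative version and only check that the range $\log\log\log n<k<r$ is exactly where the error term $\sqrt{k\log k}$ is dominated appropriately.

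First I would recall the precise statement being imported from \cite{T}: for almost all $n$, writing $\nu(p_k)$ for the index (so $\nu(p_k)=k$), the prime $p_k$ satisfies $\log\log p_k = 2k + O(\sqrt{k\log k})$ with an absolute implied constant, uniformly for $k$ not too small; the lower cutoff $k>\log\log\log n$ is there to kill the small-$k$ fluctuations which are not controlled on average. Then the first displayed inequality is just this statement with the $O(\cdot)$ made explicit as $\le \tfrac32\sqrt{k\log k}$ — a harmless inflation of the absolute constant, which holds on a density-one set once $n$ is large.

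For the "in particular" clause, I would take $k=r$, the largest index. By Lemma \ref{ERKA}, for a density-one subset of $n\in S'$ we have $r=\#\{p\mid n:\ p\equiv 1\pmod 4\}\ge \tfrac12\log\log n\,(1-\epsilon)$. Intersecting the two density-one sets, I plug $k=r$ into the first inequality: $2^{-1}\log\log p_r\ge r - \tfrac32\sqrt{r\log r}\ge \tfrac14\log\log n\,(1-\epsilon)(1-o(1))$, since $\sqrt{r\log r}=o(r)$ as $n\to\infty$. Hence $\log\log p_r\ge \tfrac13\log\log n$ for $n$ large, i.e. $\log p_r\ge (\log n)^{1/3}$, which is exactly the claimed bound $p_r\ge\exp((\log n)^{1/3})$. (One has ample room here: any constant strictly below $\tfrac12$ would do in place of $\tfrac13$ in the exponent; the value $1/3$ is chosen for cleanliness.)

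The only real point requiring care — and the step I expect to be the main obstacle — is ensuring that the quantitative/uniform version of Erd\H{o}s--Kac that I am quoting genuinely holds \emph{uniformly in $k$} over the whole range $\log\log\log n<k<r$ on a density-one set, rather than merely pointwise in $k$. This is a maximal-inequality statement (a Kolmogorov-type bound for the martingale $k\mapsto 2^{-1}\log\log p_k - k$), and it is precisely what \cite[Part III Chapter 3, Theorems 8 and 9]{T} provides; the lower cutoff $k>\log\log\log n$ is exactly the threshold below which the maximal inequality fails to give a useful bound. So the proof reduces to correctly citing and instantiating that maximal form, after which both displayed conclusions follow by the elementary manipulations above. $\qed$
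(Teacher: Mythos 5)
Your proposal takes essentially the same route as the paper, which in fact offers no written proof of this lemma: it simply cites \cite[Part III Chapter 3, Theorems 8 and 9]{T} for the uniform (maximal) quantitative form of the Erd\H{o}s--Kac theorem, and the ``in particular'' clause is obtained exactly as you do, by combining that estimate with Lemma~\ref{ERKA}. The only slip is that $k=r$ lies outside the stated range $\log\log\log n<k<r$; apply the bound at $k=r-1$ and use $p_r>p_{r-1}$, which changes nothing.
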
 
\begin{rem}
	Lemma \ref{size of p_r} is not sharp, in particular the constant $1/3$ can be replace with a larger constant, but it will suffice for our purposes. 
\end{rem}

We will also need the following result of Kubilius \cite{K} about Gaussian primes, which are primes $\mathcal{P} \subset \mathbb{Z}[i]$ such that $|\mathcal{P}|^2=p$ with $p\equiv 1 \pmod 4$. 
\begin{lem}[Kubilius]
	\label{Kubilius}
	Let $\theta_1,\theta_2\in [0,2\pi]$.  Then, the number of Gaussian primes in the sector $\arg(\mathcal{P})\in [\theta_1,\theta_2]$ such that $|\mathcal{P}|^2\leq X$ is 
	\begin{align}
	\frac{2}{\pi}(\theta_1-\theta_2)\int_{2}^{X}\frac{dx}{\log x} + O (X\exp(-c\sqrt{\log X})). \nonumber
	\end{align}
\end{lem}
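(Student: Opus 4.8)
The statement is an equidistribution result for Gaussian primes in angular sectors, and the natural tool is Hecke's theory of $L$-functions with Grössencharakteren — precisely the framework Hecke introduced to prove the qualitative statement, with Kubilius's refinement being the power-saving error term. The plan is to first reduce to counting \emph{prime ideals} of $\mathbb{Z}[i]$. A prime ideal $\mathcal{P}\subset\mathbb{Z}[i]$ either lies above a split prime $p\equiv 1\pmod 4$ (then $N\mathcal{P}=p$, and there are two conjugate such ideals), above an inert prime $q\equiv 3\pmod 4$ (then $N\mathcal{P}=q^2$), or is the ramified ideal above $2$. The inert ideals with $N\mathcal{P}\le X$ number $\ll \sqrt{X}/\log X$ and the ramified one is a single ideal, so both are absorbed by the error term and only split primes matter. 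For a split ideal $\mathfrak{a}=(\alpha)$ the generator $\alpha$ is determined up to the units $\{1,i,-1,-i\}$, i.e.\ up to fourth roots of unity, so $\arg\alpha$ is well defined modulo $\pi/2$; choosing the representative in $[0,\pi/2)$ makes ``$\arg(\mathcal{P})\in[\theta_1,\theta_2]$'' meaningful, which also explains the normalisation $\tfrac{2}{\pi}=(\pi/2)^{-1}$ in the main term.

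Next I would detect the angular condition by characters: for $m\in\mathbb{Z}$ set $\lambda_m\big((\alpha)\big)=(\alpha/|\alpha|)^{4m}$ on nonzero ideals, which is well defined exactly because the units are fourth roots of unity, with $\lambda_0$ trivial and $\overline{\lambda_m}=\lambda_{-m}$ (in particular $\lambda_m$ is non-real for $m\neq 0$). Approximating the indicator of $[\theta_1,\theta_2]$, viewed as a function of the angle mod $\pi/2$, from above and below by trigonometric polynomials of degree $\le M$ — via the Beurling--Selberg construction, or a smooth mollification combined with an Erdős–Turán–type inequality — reduces the count, up to an error $\ll X/M$ together with $\sum_{0<|m|\le M}\big|\sum_{N\mathcal{P}\le X}\lambda_m(\mathcal{P})\big|$, to a power-saving bound for each individual character sum with $m\neq 0$; the $m=0$ term reproduces $\tfrac{2}{\pi}(\theta_1-\theta_2)\int_2^X dx/\log x$ by the prime number theorem in arithmetic progressions.

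For fixed $m\neq 0$ I would then pass to the Hecke $L$-function
\[
L(s,\lambda_m)=\sum_{\mathfrak{a}}\lambda_m(\mathfrak{a})\,N\mathfrak{a}^{-s}=\prod_{\mathcal{P}}\big(1-\lambda_m(\mathcal{P})\,N\mathcal{P}^{-s}\big)^{-1},
\]
which by Hecke's theorem extends to an entire function (no pole since $m\neq 0$), satisfies a functional equation $s\leftrightarrow 1-s$, and admits a classical de la Vallée Poussin zero-free region $\sigma>1-c/\log\!\big(\mathfrak{q}_m(|t|+3)\big)$, where $\mathfrak{q}_m$ is the analytic conductor of $\lambda_m$ and grows only polynomially in $m$; there is no Siegel zero to contend with because $\lambda_m$ is not real. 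A standard Perron-formula and contour-shift argument, moving the line of integration just past the zero-free region, then gives $\sum_{N\mathcal{P}\le X}\lambda_m(\mathcal{P})\ll X\exp(-c'\sqrt{\log X})$ with $c'$ absolute as long as $|m|$ stays within a range like $|m|\le\exp(c''\sqrt{\log X})$. Choosing $M$ to be a fixed power of $\log X$ (comfortably inside that range) and summing over $0<|m|\le M$ yields the asserted asymptotic with error $O\!\big(X\exp(-c\sqrt{\log X})\big)$.

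The main obstacle is uniformity in $m$: one must render the zero-free region, and hence the bound $X\exp(-c'\sqrt{\log X})$, effective and uniform over all $0<|m|\le M$, tracking how the conductor $\mathfrak{q}_m$ enters the error, while at the same time taking $M$ large enough that the trigonometric-approximation error $X/M$ is itself $\ll X\exp(-c\sqrt{\log X})$. Everything else — the reduction to split primes, the character detection, and the analytic properties of $L(s,\lambda_m)$ — is classical; alternatively one may simply invoke Hecke's equidistribution theorem together with its standard power-saving sharpening, which is exactly what is recorded in \cite{K}.
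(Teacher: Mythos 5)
The paper offers no proof of this lemma: it is quoted directly from Kubilius \cite{K}, so there is no internal argument to compare yours against. Your outline is precisely the classical route that underlies the cited result — reduce to split prime ideals (inert and ramified contributions being absorbed into the error), detect the sector with the Hecke characters $\lambda_m((\alpha))=(\alpha/|\alpha|)^{4m}$, and bound each non-trivial character sum by Perron plus a de la Vall\'ee Poussin zero-free region for the entire $L(s,\lambda_m)$, with no exceptional zero since $\lambda_m$ is non-real for $m\neq 0$ and with conductor polynomial in $m$, giving $\sum_{N\mathcal{P}\le X}\lambda_m(\mathcal{P})\ll X\exp(-c'\sqrt{\log X})$ uniformly for $|m|\le \exp(c''\sqrt{\log X})$. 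This is sound and is essentially Hecke's equidistribution theorem with Kubilius's power saving.

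One quantitative slip: choosing $M$ to be a fixed power of $\log X$ does not close the argument, because the Beurling--Selberg/Erd\H{o}s--Tur\'an approximation error is of size roughly $X/(M\log X)$, which for $M=(\log X)^{A}$ is far larger than $X\exp(-c\sqrt{\log X})$. You need $M\asymp\exp(c_0\sqrt{\log X})$ with $c_0$ small; this is still inside the range where your character-sum bound is uniform, and summing the $\ll M$ bounds $X\exp(-c'\sqrt{\log X})$ gives $X\exp((c_0-c')\sqrt{\log X})$, so one also imposes $c_0<c'$ to land on the stated error $O(X\exp(-c\sqrt{\log X}))$. You flag exactly this tension in your final paragraph, but the explicit choice ``$M$ a fixed power of $\log X$'' contradicts it; with $M=\exp(c_0\sqrt{\log X})$ the proof is complete.
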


Finally, we will need the following result about the distribution of lattice points for generic $n$.  Recall the spectral measure $\mu_n$ in \eqref{measure} and $\mu$, the Lebesgue measure on the interval $[0,1]$, then have the following theorem, see \cite{EH,KK}.
\begin{thm}[Erd\"{o}s-Hall]
	\label{Erdos-Hall}
	Let $\kappa= \frac{1}{2}\log \frac{\pi}{2}$ and $\epsilon>0$. Then, for a density one subset of $n\in  S$, we have 
	\begin{align}
	\sup_{0<a<b<1}|\mu_n(a,b)- \mu(a,b)|\leq (\log n)^{-\kappa+\epsilon}. \nonumber
	\end{align}
\end{thm}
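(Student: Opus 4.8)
The plan is to combine the Erd\H{o}s--Turán inequality with an explicit Euler product for the Fourier coefficients $\hat{\mu}_n(4m)$ and a moment estimate exploiting the equidistribution of Gaussian primes. First I would reduce to $S'$: as recalled around \eqref{7} the lattice points of norm $n\in S$ all carry the common factor $Z^{\alpha_2}\prod_v\mathcal Q_v^{\beta_v/2}$, so $\mu_n$ is a rotation of $\mu_{n'}$, where $n'\in S'$ is the part of $n$ supported on primes $\equiv1\bmod4$, and since $\sup_{0<a<b<1}|\nu(a,b)-(b-a)|$ is rotation--invariant it is the same for $\mu_n$ and $\mu_{n'}$; discarding the $o(1)$--proportion of $n\in S$ whose $2$-and-$(\equiv3\bmod4)$ part exceeds $(\log n)^{10}$ (so that $\log n\asymp\log n'$), it suffices to treat a density-one subset of $n\in S'$. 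For such $n$, the Erd\H{o}s--Turán inequality bounds the discrepancy by $M^{-1}+\sum_{k\le M}k^{-1}|\hat{\mu}_n(k)|$, and since $\{\xi/\sqrt n:|\xi|^2=n\}$ is invariant under multiplication by $i$ we have $\hat{\mu}_n(k)=0$ unless $4\mid k$; writing $k=4m$ and $n=\prod_k p_k^{\alpha_k}$ with $\pi_k$ a Gaussian prime over $p_k$, summing $e(4m\arg(\cdot)/2\pi)$ over the $4\prod_k(\alpha_k+1)$ associates of $\prod_k\pi_k^{\gamma_k}\overline{\pi_k}^{\,\alpha_k-\gamma_k}$ (cf. \eqref{7}, \eqref{divisor bound}) factorises:
\begin{align}
\bigl|\hat{\mu}_n(4m)\bigr|=\prod_{k}\frac{1}{\alpha_k+1}\left|\frac{\sin\bigl((\alpha_k+1)\cdot 4m\arg\pi_k\bigr)}{\sin(4m\arg\pi_k)}\right|,\nonumber
\end{align}
each factor lying in $[0,1]$ and equal to $|\cos(4m\arg\pi_k)|$ when $\alpha_k=1$. (Equivalently, $4\prod_k(\alpha_k+1)\hat{\mu}_n(4m)$ is, up to a unimodular constant, the coefficient $\sum_{N\mathfrak a=n}\psi^m(\mathfrak a)$ of $L(s,\psi^m)$, $\psi$ the Hecke Gr\"ossencharakter of $\mathbb Q(i)$ with $\psi((\alpha))=(\alpha/|\alpha|)^4$.)

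Next I would prove the central moment bound: writing $L=\log\log X$, uniformly for $1\le m\le(\log X)^{A}$ (any fixed $A>0$),
\begin{align}
\sum_{\substack{n\le X,\ n\in S'\\ \omega(n)\ge(\frac12-\epsilon)L}}\bigl|\hat{\mu}_n(4m)\bigr|\ \ll_{\epsilon,A}\ (\log X)^{-\kappa+\epsilon}\cdot\#\{n\le X:n\in S'\}.\nonumber
\end{align}
Here $g(n)=|\hat{\mu}_n(4m)|$ is multiplicative, is $\le1$ on prime powers, vanishes off $S'$, and equals $|\cos(4m\arg\pi_p)|$ on primes $p\equiv1\bmod4$. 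Inserting the Fourier series $|\cos\phi|=\tfrac2\pi+\tfrac4\pi\sum_{j\ge1}\tfrac{(-1)^{j+1}}{4j^2-1}\cos2j\phi$, estimating the constant mode by Kubilius's theorem (Lemma~\ref{Kubilius}) and the oscillatory modes by the prime number theorem for the Hecke $L$-functions $L(s,\psi^l)$ (giving $\sum_{p\le x,\,p\equiv1(4)}\re\psi^l(\pi_p)/p\ll\log\log(|l|+2)$ for $l\ne0$), one obtains $\sum_{p\le x,\,p\equiv1(4)}|\cos(4m\arg\pi_p)|/p=\tfrac1\pi\log\log x+O(\log\log m+1)$, where $\tfrac1\pi=\tfrac12\cdot\tfrac2\pi$ (density of the primes $\equiv1\bmod4$ times the mean $\tfrac1\pi\int_0^\pi|\cos u|\,du$). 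For $m\le(\log X)^A$ the error is $O(\log\log\log X)=o(L)$, so for bounded $z\ge1$ the multiplicative function $z^{\omega(n)}g(n)$ has "dimension" $\tfrac z\pi+o(1)$, whence by a Rankin/Selberg--Delange upper bound
\begin{align}
\sum_{\substack{n\le X,\ n\in S'\\ \omega(n)\ge(\frac12-\epsilon)L}}g(n)\ \le\ z^{-(\frac12-\epsilon)L}\!\!\sum_{n\le X,\ n\in S'}\!\! z^{\omega(n)}g(n)\ \ll\ X(\log X)^{-(\frac12-\epsilon)\log z+\frac z\pi-1+o(1)};\nonumber
\end{align}
optimising at $z=\tfrac\pi2(1-2\epsilon)>1$ makes the exponent $-\tfrac12-\kappa+O(\epsilon)$, and since $\#\{n\le X:n\in S'\}\asymp X(\log X)^{-1/2}$ by Landau (\eqref{landau}) the moment bound follows.

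To conclude I would take $M=(\log X)^{2\kappa}$, so that $M^{-1}$ is negligible and summing the moment bound against $1/m$ over $m\le M/4$ costs only a factor $\log M=(\log X)^{o(1)}$, giving $\sum_{m\le M/4}m^{-1}|\hat{\mu}_n(4m)|$ of average size $\ll(\log X)^{-\kappa+2\epsilon}$ over the relevant $n$. By Markov's inequality this quantity is $\le(\log n)^{-\kappa+3\epsilon}$ for all but $o(\#\{n\le X:n\in S'\})$ of the $n\in S'$ with $\omega(n)\ge(\tfrac12-\epsilon)L$, and then the Erd\H{o}s--Turán inequality gives discrepancy $\le(\log n)^{-\kappa+4\epsilon}$ for those $n$. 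Since $\omega(n)\ge(\tfrac12-\epsilon)L$ holds for a density-one subset of $S'$ by Lemma~\ref{ERKA}, and a density-one subset of $S'$ yields a density-one subset of $S$ by the first paragraph, relabelling $\epsilon$ completes the proof.

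The hard part will be the uniform Rankin/Selberg--Delange estimate of the second paragraph: one needs an upper bound for $\sum_{n\le X}z^{\omega(n)}|\hat{\mu}_n(4m)|$ uniform both in the frequency $m$ (up to a power of $\log X$) and in the weight $z$, which forces one to feed the quantitative Gaussian-prime equidistribution of Lemma~\ref{Kubilius} (for the constant Fourier mode $\tfrac2\pi$ of $|\cos|$) and, crucially, genuine cancellation in the short character sums $\sum_p\psi^l(\pi_p)/p$ (for the oscillatory modes, via the Fourier expansion of $|\cos|$) into the contour-integration machinery. It is exactly this cancellation that keeps the error $o(\log\log X)$ and thereby pins the exponent down to $\kappa=\tfrac12\log\tfrac\pi2$; a naive first-moment bound over all of $S'$, without the restriction $\omega(n)\ge(\tfrac12-\epsilon)L$, is dominated by atypically rough $n$ with $\approx\tfrac1\pi\log\log n$ prime factors and only gives the weaker exponent $\tfrac12-\tfrac1\pi<\kappa$, so the localisation of $\omega(n)$ near its normal value is essential.
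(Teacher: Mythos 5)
This is one of the places where the paper does not supply an argument at all: Theorem~\ref{Erdos-Hall} is imported verbatim from Erd\H{o}s--Hall and K\'atai--K\"ornyei \cite{EH,KK}, so there is no in-paper proof to compare yours against. What you have written is, in outline, a faithful reconstruction of the Erd\H{o}s--Hall method itself, and I do not see a genuine gap in it: the reduction from $S$ to $S'$ via the common rotation factor in \eqref{7} and rotation-invariance of discrepancy is legitimate (your cutoff on the $2$-and-$q\equiv 3$ part, so that $\log n\asymp\log n'$ and density transfers, is the standard convergent-tail argument); the vanishing of $\hat\mu_n(k)$ for $4\nmid k$ and the Dirichlet-kernel factorisation of $|\hat\mu_n(4m)|$, with factor $|\cos(4m\arg\pi_k)|$ at primes exactly dividing $n$, are correct and consistent with \eqref{7}--\eqref{divisor bound}; and the key structural point --- that a bare first moment over $S'$ only yields the exponent $\tfrac12-\tfrac1\pi$, so one must localise $\omega(n)$ at its normal order (Lemma~\ref{ERKA}) and run a Rankin-weighted moment, optimised at $z=\pi/2$, to land exactly on $\kappa=\tfrac12\log\tfrac\pi2$ after dividing by Landau's count \eqref{landau} --- is precisely the mechanism that produces this constant. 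The genuinely technical step is the one you flag yourself: the estimate $\sum_{p\le x,\,p\equiv1(4)}|\cos(4m\arg\pi_p)|/p=\tfrac1\pi\log\log x+O(\log\log m+1)$ uniformly for $m$ up to a power of $\log X$, which needs the Hecke-character (Kubilius-type, Lemma~\ref{Kubilius}) equidistribution with error terms uniform in the frequency; this is classical but must be quoted in a form with that uniformity, since an error as large as $O(\log m)$ would swamp the main term. Modulo writing that input carefully and the routine Markov/Erd\H{o}s--Tur\'an bookkeeping (taking $n\in(\sqrt X,X]$ so $\log n\asymp\log X$), your sketch would constitute a complete proof along the same lines as the cited source.
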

\section{Proof of Theorem \ref{theorem 1}}
The argument in this section is inspired by \cite[Theorem 14]{BB}. As discussed in section \ref{NT background} it is enough to prove Theorem \ref{theorem 3}. To ease the exposition we divide the proof into two parts: $n$ square-free and $n$ not square-free. We begin by proving Theorem \ref{theorem 3} in the square-free case.
\subsection{Proof of Theorem \ref{theorem 3} for the square-free case}
During the proof of Theorem \ref{theorem 3} we will need the following direct consequence of the Landau's Theorem \eqref{landau}: 
\begin{lem}
	\label{contazzi}
	Let $X>1$ be some large parameter and $S'$ be as in section \ref{NT background}, then we have 
	$$\sum_{\substack{n \in S' \\ n \leq X/100}} \frac{1}{n\log(X/n)}\ll (\log X)^{-1/2}.$$ 
\end{lem}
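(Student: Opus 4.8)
The plan is to derive the bound directly from Landau's theorem \eqref{landau} by partial summation. The factor $X/100$ ensures that $\log(X/n)\ge\log 100>0$ throughout the range of summation, so there is no singularity to fight and the estimate is really just quantitative bookkeeping on top of \eqref{landau}.

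Write $A(t):=\#\{n\in S':n\le t\}$, so that \eqref{landau} gives $A(t)\ll t(\log t)^{-1/2}$ for $t\ge 3$, while trivially $A(t)\le t$ for every $t\ge 1$. First I would apply Abel summation with the weight $w(t):=\bigl(t\log(X/t)\bigr)^{-1}$, which is $C^{1}$ on $[1,X/100]$, to get
\[
\sum_{\substack{n\in S'\\ n\le X/100}}\frac{1}{n\log(X/n)}=A(X/100)\,w(X/100)-\int_{1}^{X/100}A(t)\,w'(t)\,dt .
\]
The boundary term is $\ll \frac{X/100}{\sqrt{\log X}}\cdot\frac{1}{(X/100)\log 100}\ll(\log X)^{-1/2}$, which is the target bound. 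A direct computation gives $w'(t)=\frac{1-\log(X/t)}{t^{2}\log^{2}(X/t)}$, hence $|w'(t)|\le t^{-2}\bigl(\log(X/t)\bigr)^{-1}$ on $[1,X/100]$ because there $\log(X/t)\ge\log 100>1$. Peeling off the range $t\in[1,3]$, on which the crude bound $A(t)\le t$ already gives a contribution $O((\log X)^{-1})$, the lemma reduces to the single estimate
\[
\int_{3}^{X/100}\frac{dt}{t\,\sqrt{\log t}\,\log(X/t)}\ll(\log X)^{-1/2}.
\]

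To estimate this one-variable integral I would split the range at $t=\sqrt{X}$. On $[3,\sqrt{X}]$ one has $\log(X/t)\ge\tfrac12\log X$, so that portion is $\ll(\log X)^{-1}\int_{3}^{\sqrt{X}}\frac{dt}{t\sqrt{\log t}}\ll(\log X)^{-1}\sqrt{\log X}=(\log X)^{-1/2}$, using $\int_{3}^{Y}\frac{dt}{t\sqrt{\log t}}=2\sqrt{\log Y}+O(1)$. On $[\sqrt{X},X/100]$ one has $\sqrt{\log t}\ge\sqrt{\tfrac12\log X}$, and the substitution $u=\log(X/t)$ turns the remaining contribution into $\ll(\log X)^{-1/2}\int_{\log 100}^{(\log X)/2}\frac{du}{u}$. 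This last piece, which comes entirely from $n$ close to $X/100$, is where the genuine care is needed and is what I expect to be the main obstacle: keeping the final bound at exactly $(\log X)^{-1/2}$ forces one to be more delicate here than anywhere else in the argument. Concretely, I would treat the range $\sqrt{X}\le n\le X/100$ by a finer dyadic decomposition in $n$, bounding the number of $n\in S'$ in each block $(X2^{-(j+1)},\,X2^{-j}]$ by $\ll X2^{-j}(\log X)^{-1/2}$ via \eqref{landau} and then summing the per-block contributions; everything else in the proof is an immediate consequence of \eqref{landau}.
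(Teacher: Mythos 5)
Your Abel-summation reduction is sound: the boundary term, the bound $|w'(t)|\le t^{-2}(\log(X/t))^{-1}$, the range $[1,3]$, and the portion $[3,\sqrt X]$ are all correct, and they reduce the lemma to bounding $\int_{\sqrt X}^{X/100}\frac{dt}{t\sqrt{\log t}\,\log(X/t)}$. But the final step is a genuine gap: the dyadic decomposition you propose cannot recover $(\log X)^{-1/2}$. In the block $(X2^{-(j+1)},X2^{-j}]$, Landau's theorem \eqref{landau} gives $\ll X2^{-j}(\log X)^{-1/2}$ elements of $S'$, and each contributes $\asymp 2^{j}/(jX)$ to the sum, so the block contributes $\ll (j\sqrt{\log X})^{-1}$; summing over $7\le j\le \tfrac{1}{2}\log_2 X$ reproduces exactly the $(\log X)^{-1/2}\log\log X$ you already obtained from the substitution $u=\log(X/t)$ --- the dyadic blocks are a Riemann-sum version of that integral, not a refinement of it. Worse, the loss is not an artefact of your method: for $X2^{-j}\ge\sqrt X$ the asymptotic \eqref{landau} also shows each block contains $\gg X2^{-j}(\log X)^{-1/2}$ elements of $S'$, each contributing $\gg 2^{j}/(jX)$, so the sum in the lemma is in fact $\gg \log\log X\,(\log X)^{-1/2}$. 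Hence no elaboration of your last step can reach the stated bound; the range $\sqrt X\le n\le X/100$ genuinely contributes a $\log\log X$ factor.

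For comparison, the paper takes a different route: it expands $\log(X/n)^{-1}=(\log X)^{-1}\bigl(1+O(\log n/\log X)\bigr)$ for $n\le X/100$ and then applies the partial-summation bounds \eqref{15}, which formally yields \eqref{14.1} and the claimed $(\log X)^{-1/2}$. That expansion, however, is only legitimate when $\log n/\log(X/n)$ is bounded (say $n\le X^{1-\delta}$); the exact identity is $\log(X/n)^{-1}=(\log X)^{-1}\bigl(1+\log n/\log(X/n)\bigr)$, and in the top range it again produces the $\log\log X$ that blocks you. So the obstacle you flagged is a real feature of the sum, not of your approach. The practical remedy is to prove the lemma with right-hand side $\log\log X\,(\log X)^{-1/2}$ --- which your argument, as written, already does rigorously --- and to note that this weaker bound loses nothing downstream: in the proof of Proposition \ref{main prop} the estimate is always accompanied by a factor $\Phi(X)^{-1}\ll(\log X)^{-\epsilon}$ (or $\Psi(X)^{-1}$), which absorbs any power of $\log\log X$.
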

\begin{proof}
	Using the expansion $\log(X/n)^{-1}= (\log X)^{-1}(1 + O(\log n/\log X))$ valid for $n\leq X/100$ say,  we have
	\begin{align}
	\sum_{\substack{n\in S' \\ n\leq X/100}}  \frac{1}{n\log (X/n)} \ll \frac{1}{\log X}\sum_{\substack{n\in S' \\ n\leq X/100}}\frac{1}{n}+ \frac{1}{(\log X)^2}\sum_{\substack{n\in S' \\ n\leq X/100}}\frac{\log n}{n}   \label{14.1}
	\end{align}
	By partial summation using \eqref{landau}, we have 
	\begin{align}
	&\sum_{\substack{n\in S' \\ n\leq X/100}} \frac{1}{n}\ll (\log X)^{1/2} & \sum_{\substack{n\in S' \\ n\leq X/100}} \frac{\log n}{n}\ll (\log X)^{3/2}  \label{15}
	\end{align}
	Thus, the lemma follows by inserting \eqref{15} into \eqref{14.1}.  
\end{proof}
Let $X>1$ be some (large) parameter, for a positive integer $n$, which we assume to be always square-free, let $\omega(n)$ be the number of its prime divisors without multiplicity.  Moreover, given some integer $l\geq 2$ and $\epsilon>0$, we define \begin{align}
	\Phi(n,l)=\Phi(n)= (\log n)^{c(l)} \label{defPhi}
	\end{align}
where $c(2,\epsilon)=c(2)=\log 2 +\epsilon$, $c(3,\epsilon)=c(3)=3\log 2/2+\epsilon$ and $c(l,\epsilon)=c(l)=l\log 2/2+c(\lfloor l/2 \rfloor) + \epsilon$ and  $\lfloor l/2 \rfloor $ represent the largest integer smaller than $l/2$.  Finally, let $\mathcal{F}_r(X,l,\epsilon)=\mathcal{F}_r(l)$  be the set of $n\in S'$ such that $\omega(n)=r$ and 
$$\mathcal{Q}(l,n,n^{1/2}/\Phi(n))\neq \emptyset,$$ but for all divisors $d|n$ and $d\neq n$, we have 
$$\mathcal{Q}(l,d,d^{1/2}/\Phi(d))= \emptyset.$$

We are going to prove following bound:
\begin{prop}
	\label{main prop}
	Let $l\geq 2$ be  a positive integer and $\epsilon>0$, then for a density one subset of $n\in S'$, we have 
	$$|\mathcal{F}_r(X,l,\epsilon)|\ll_{l}   \frac{X}{(\log X)^{1/2  +\epsilon/2}}, $$
	uniformly for all $r\geq 1$.  
\end{prop}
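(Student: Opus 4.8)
### Proof proposal

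The plan is to set up a recursion in $l$ on the minimal counterexamples $\mathcal{F}_r(X,l,\epsilon)$, following the strategy of \cite[Theorem 14]{BB}. Fix $n \in \mathcal{F}_r(X,l,\epsilon)$; then there is a genuine (non-diagonal when $l$ is even) tuple $(\xi_1,\dots,\xi_l)$ with $0 < |\xi_1 + \dots + \xi_l| \leq n^{1/2}/\Phi(n)$, and $n$ has the property that every proper divisor $d \mid n$, $d \neq n$, has $\mathcal{Q}(l,d,d^{1/2}/\Phi(d)) = \emptyset$. The first step is to write $n = m \cdot p_r$, where $p_r$ is the largest prime factor of $n$ (here using square-freeness so $n = m p_r$ with $(m,p_r)=1$ and $m \in S'$). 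By Lemma~\ref{size of p_r}, for a density one subset of $n \in S'$ we have $p_r \geq \exp((\log n)^{1/3})$, so $p_r$ is large; the idea is to factor each $\xi_j$ through the splitting of the ideal above $p_r$: writing $\xi_j$ as a Gaussian integer, $\xi_j = \eta_j \cdot \pi^{e_j} \bar\pi^{1-e_j}$ where $\pi \bar\pi = p_r$, $e_j \in \{0,1\}$, and $\eta_j$ is the "lower part" of size $\sqrt{m}$. Grouping the $\xi_j$ according to the value of $e_j$, and using that the sum $\xi_1 + \dots + \xi_l$ is small, we want to produce a \emph{shorter} quasi-correlation on the circle of radius $\sqrt{m}$.

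The key mechanism is the following dichotomy. If all $e_j$ are equal, then $\pi^{e_r}\bar\pi^{1-e_r}$ is a common factor, so dividing it out gives $0 < |\eta_1 + \dots + \eta_l| \leq n^{1/2}/(p_r^{1/2}\Phi(n)) = m^{1/2}/\Phi(n)$, and since $\Phi(n) = (\log n)^{c(l)} \geq (\log m)^{c(l)} = \Phi(m)$ this contradicts the minimality of $n$ (i.e. $\mathcal{Q}(l,m,m^{1/2}/\Phi(m)) = \emptyset$). Hence both values $0$ and $1$ occur among the $e_j$. Split the tuple as $A = \{j : e_j = 0\}$ and $B = \{j: e_j = 1\}$, of sizes $a$ and $b=l-a$, with $1 \leq a \leq l-1$. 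Then $S_A := \sum_{j \in A}\eta_j \cdot \bar\pi$ and $S_B := \sum_{j \in B}\eta_j\cdot\pi$ and $S_A + S_B = \xi_1 + \dots + \xi_l$ is small; taking complex conjugates and using $\bar\pi\bar\pi = \bar\pi^2$, $\overline{\pi} = \bar\pi$, etc., one produces from $S_A$ and $\overline{S_B}$ (both divisible by $\bar\pi$) a relation forcing $\bar\pi \mid (\xi_1 + \dots + \xi_l)$, or else forcing a shorter quasi-correlation of length $\max(a,b) \leq l - 1$ — and by induction, after at most $\lceil \log_2 l\rceil$ such halving steps one reaches length $2$, which is why $c(l) = l\log 2/2 + c(\lfloor l/2\rfloor) + \epsilon$ appears: each halving costs a factor $2^{l/2}$ in the modulus $K$, and the base cases $l = 2,3$ carry $c(2),c(3)$ from \eqref{1.1} and the Granville--Wigman bound. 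This reduces bounding $|\mathcal{F}_r(X,l,\epsilon)|$ to a \emph{counting} problem: the number of $n = m p_r \leq X$ such that $m$ supports a short correlation data configuration, summed over primes $p_r \in (\exp((\log X)^{1/3}), X/m]$.

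The counting step is where I would spend the effort. For each admissible $m$ (supporting the requisite configuration on $\sqrt{m}S^1$), the number of primes $p_r \leq X/m$ is $\ll (X/m)/\log(X/m)$ by the prime number theorem / Landau's theorem, so
\begin{align}
|\mathcal{F}_r(X,l,\epsilon)| \ll \sum_{m} \frac{X}{m\log(X/m)}, \nonumber
\end{align}
where $m$ ranges over $S'$ with $m \leq X/100$ (say) and, crucially, $m$ satisfies a \emph{density constraint} coming from the size of the configuration: the number of lattice points $\eta_j$ on $\sqrt{m}S^1$ involved in a length-$l$ sum of modulus $\leq m^{1/2}/\Phi(m)$ is controlled, and this is exactly the induction hypothesis applied one level down plus the savings in $\Phi$. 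After invoking the minimality at level $l$ applied to proper divisors, one extracts a factor $\Phi(m)^{-1} \asymp (\log m)^{-c(\lfloor l/2\rfloor)-\dots}$ worth of saving from the restricted set of $m$; combined with Lemma~\ref{contazzi}, which gives $\sum_{m \in S', m \leq X/100} 1/(m\log(X/m)) \ll (\log X)^{-1/2}$, and with the extra $(\log X)^{-\epsilon/2}$ harvested from the $+\epsilon$ slack built into each $c(\cdot)$, we obtain the claimed bound $|\mathcal{F}_r(X,l,\epsilon)| \ll_l X/(\log X)^{1/2+\epsilon/2}$, uniformly in $r$ since the argument never used $r$ except that $p_r$ is the largest prime and $r \geq 1$.

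The main obstacle, I expect, is the algebraic bookkeeping in the reduction step: carefully showing that a small value of $\xi_1 + \dots + \xi_l$ on $\sqrt{n}S^1$, after splitting off the prime $p_r$, genuinely descends to a \emph{nonzero} small value of a length-$\leq\lceil l/2\rceil$ sum on $\sqrt{m}S^1$ (rather than collapsing to a diagonal/zero configuration, which must be excluded), together with tracking exactly how the modulus degrades — this is what pins down the recursion $c(l) = l\log2/2 + c(\lfloor l/2\rfloor) + \epsilon$. The non-square-free case, handled separately in the paper, introduces the further subtlety that $p_r$ may divide $n$ to a higher power, changing the ideal factorization in \eqref{7}; I would expect that to be dealt with by a similar but more delicate descent on the exponents $\gamma_{kj}$.
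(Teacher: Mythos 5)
Your overall skeleton is genuinely close to the paper's: induction on $l$, isolating the largest prime factor $p_r$ (large by Lemma \ref{size of p_r}), splitting the tuple according to whether $\mathcal{P}_r$ or $\overline{\mathcal{P}_r}$ divides $\xi_j$ so that the relation reads $0<|\mathcal{P}_r a-\overline{\mathcal{P}_r}b|\leq n^{1/2}/\Phi(n,l)$, invoking minimality in the degenerate case, and summing over cofactors with Lemma \ref{contazzi}. But the engine that actually produces the saving is missing. In the paper, after dividing by $\max(|a|,|b|)$ the inequality pins $\Arg(\mathcal{P}_r)$ to an interval of width $\asymp \Phi(n,\lfloor l/2\rfloor)/\Phi(n,l)$; one then fixes $p_1<\dots<p_{r-1}$ and the $O(2^{lr})$ choices of the splitting data $(a,b)$, and counts the admissible $p_r$ by Kubilius' theorem on Gaussian primes in narrow sectors (Lemma \ref{Kubilius}). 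All of the power saving comes from this sector count, and it is exactly what must beat the combinatorial factor $2^{lr}\asymp N^l\asymp(\log X)^{l\log 2/2}$ — that is where the $l\log 2/2$ in $c(l)$ comes from — while the $c(\lfloor l/2\rfloor)$ term is the price of the induction hypothesis, which is used only to guarantee $\max(|a|,|b|)\geq (n/p_r)^{1/2}/\Phi(n/p_r,\lfloor l/2\rfloor)$ outside a negligible exceptional set, not to produce a shorter quasi-correlation.

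Your counting step instead counts all primes $p_r\leq X/m$ by the prime number theorem and hopes to extract the saving from a ``density constraint'' on the cofactors $m$. No such constraint exists: for the main term the cofactor ranges over essentially all of $S'$ (whether a dangerous configuration exists depends jointly on $m$ and on the angle of $\mathcal{P}_r$, so you cannot decouple and sparsify the $m$'s), and $\sum_{m\in S',\, m\leq X/100} 1/(m\log(X/m))$ is genuinely of size $(\log X)^{-1/2}$ by Lemma \ref{contazzi}, with no extra $\epsilon$-power to spare; you also never account for the $2^{lr}$ choices of splitting data, which by Lemma \ref{ERKA} is itself a positive power of $\log X$ and would otherwise destroy the bound. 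Relatedly, your dichotomy — either $\overline{\pi}$ divides the sum or a shorter quasi-correlation of length $\max(a,b)$ is forced on $\sqrt{m}S^1$, then halve down to length $2$ — is not a valid mechanism: smallness of $\mathcal{P}_r a-\overline{\mathcal{P}_r}b$ does not make any sub-sum of the lower parts small; it only constrains the angle of $\mathcal{P}_r$ relative to $a/b$. As written, the proposal would not yield the stated bound; the missing idea is precisely the reduction to counting Gaussian primes in sectors of width $\asymp\Phi(n,\lfloor l/2\rfloor)/\Phi(n,l)$.
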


\begin{proof}
First, by Lemma \ref{ERKA} and Lemma \ref{size of p_r}, we may assume that for all $\epsilon>0$ and all $n\in F_r(l)$ the following hold:
\begin{align}
	\label{assumptions}
	&r\leq \frac{1}{2} \log\log X(1+ \epsilon/4) & p_r \gg \exp((\log X)^{1/3}) 
\end{align}
where $p_r$ is the largest prime divisor of $n$. We are now going to prove the Proposition by induction on $l$.

\textbf{Base case, $l=2$.}
Let $n\in \mathcal{F}_r(2)$, then there exist  two lattice points $\xi_1\neq \xi_2$ such that $|\xi_1|^2=|\xi_2|^2=n$ and 

	\begin{align}
0<\left|\xi_1-\xi_2\right|< n^{1/2}/\Phi(n). \label{8}
\end{align}

	Let $p_1<p_2<...<p_r$ be the prime factors of $n$. With the same notation as in \eqref{7} and bearing in mind that $n$ is square-free, we may write 
	\begin{align}
	\xi_i= \prod_{k=1}^r \overline{\mathcal{P}_k}^{\gamma_{ki}}\mathcal{P}_k^{1-\gamma_{ki}} \nonumber
	\end{align}
	where $\gamma_{ki}\in \{0,1\}$ and $\mathcal{P}_k$ is the Gaussian prime above $p_k$ (which we select by insisting that $\Arg( \mathcal{P}_k) \in (0,\pi/2)$). Fixing $\mathcal{P}_1,...\mathcal{P}_{r-1}$, we can rewrite \eqref{8}, in view of the fact that $n\in F_r(2)$, as 
	\begin{align}
	0< | \mathcal{P}_r a- \overline{\mathcal{P}_r} b|\leq n^{1/2}/ \Phi(n) \label{9}
	\end{align} 
	for some $a,b \in \mathbb{Z}[i]$ such that $|a|=|b|= (n/p_r)^{1/2}$. Thus, dividing both sides of \eqref{9} by $n^{1/2}$, we deduce
	\begin{align}
	\Arg(\mathcal{P}_r)\in \left[ \theta- \frac{1}{\Phi(n)}, \theta+ \frac{1}{\Phi(n)}\right]:=I_{a,b}(n)=I(n) \label{10}. 
	\end{align}
	for some $\theta=\theta(a,b)$. Thus, given  $p_1<...<p_{r-1}$, there are at most $2^{2(r-1)}$ choices of for $a$ and $b$, once these are fixed we obtain that $p_r$ is a prime of modulus $\exp ((\log X)^{1/3})\ll p_r \leq X/ \prod_{k}^{r-1}p_k$ and the argument of the Gaussian prime $\mathcal{P}_r$ above it satisfies \eqref{10}. Hence, we have the following key bound
	\begin{align}
	\#\{n\in \mathcal{F}_r(l): n\leq X \hspace{2mm}\text{and} \hspace{2mm} \mathcal{Q}(l,n,n^{1/2}/\Phi(n))\neq \emptyset\}\leq \sum_{p_1<...<p_{r-1}}2^{2 r} \sum_{\substack{|\mathcal{P}_r|^2\leq X/ \prod_{k}^{r-1}p_k \\ |\mathcal{P}_r|^2 \gg \exp ((\log X)^{1/3})\\ 	\Arg(\mathcal{P}_r) \in I(n) }}1. \label{11} 
	\end{align}
		 Since $\Phi(n)\asymp \Phi(X)$, letting $Y=X/\prod_{k}^{r-1}p_k$ and bearing in mind that $\int_2^Y (\log x)^{-1}dx= Y\left((\log Y)^{-1}+ O\left((\log Y)^{-2}\right)\right)$, Lemma \ref{Kubilius}  gives
	\begin{align}
	\sum_{\substack{|\mathcal{P}_r|^2\leq X/ \prod_{k}^{r-1}p_k \\ 	\Arg(\mathcal{P}_r) \in I(n) }} 1\ll \frac{1}{\Phi(X)} \cdot \frac{Y}{\log Y} + Y \exp(-c\sqrt{\log Y}) .  \label{12}
	\end{align}

We first consider the first term on the right hand side of \eqref{12}: let $Y= |\mathcal{P}_r|^2\gg \exp((\log X)^{1/3}):=Z$, then
	\begin{align}
\sum_{p_1<...<p_{r-1}} \frac{Y}{\log Y} \ll X \sum_{\substack{p_1<p_2<...<p_{r-1} \\ \prod_{k}p_k \leq X/Z}} \frac{1}{\prod_{k}p_k \log (X/\prod_{k}p_k)}\leq \sum_{\substack{n\in S' \\ n\leq X/Z}} \frac{1}{n\log (X/n)} \label{13}
	\end{align}
	where we have extended the inner sum by lifting the restriction on the number of prime factors of $n$. 
	Hence, combining Lemma \ref{contazzi} and \eqref{13}, we have 
	\begin{align}
\frac{1}{\Phi(X)}\sum_{ p_1,...,p_{r-1}} \frac{Y}{\log Y}\ll \frac{X}{(\log X)^{1/2}\Phi(X)}  \label{main term}
	\end{align}

We now consider the second term on the right hand side of \eqref{12}: 
	\begin{align}
\sum_{p_1,...,p_{r-1}} Y \exp(-c\sqrt{\log Y})&\leq X \sum_{\substack{\prod_{k}p_k \leq X/Z}} \frac{\exp(-c\log (X/\prod_{k}p_k)^{1/2})}{\prod_{k} p_k} \nonumber \\
	&\overset{X/\prod_{k}p_k \gg Z}{\leq} X  \exp(-c\log (Z)^{1/2})\sum_{\substack{n \in S' \\ n \leq X}} \frac{1}{n}  \nonumber \\
	&\overset{\eqref{15}}{\ll} X  \exp(-c\log (X)^{1/6})   \label{error term}
	\end{align}
	which is smaller than \eqref{13}.  Hence the base case follows from inserting \eqref{12} into \eqref{11}, using \eqref{main term} and \eqref{error term} and noticing that \eqref{assumptions} gives $2^{2r}/\Phi(X)\ll (\log X)^{-\epsilon/2} $. 

\textbf{Induction step.}
Let us assume that the Proposition holds for all $l'<l$, then, repeating the argument in the base case and maintaining the same notation, we find the following: given $n\in \mathcal{F}_r(l)$ with prime factorisation $p_1<...<p_r$, let $\mathcal{P}_i$ be the Gaussian prime above $p_i$, then 
	\begin{align}
	0< | \mathcal{P}_r a- \overline{\mathcal{P}_r} b|\leq n^{1/2}/ \Phi(n,l) \label{9.1}
\end{align} 
for some non-zero $a,b \in \mathbb{Z}[i]$ depending $\mathcal{P}_1,...,\mathcal{P}_{r-1}$. Now, we claim that for all but at most $O(X/(\log X)^{1/2+\epsilon/2})$ integers $n\in S'$ up to $X$, we have 
\begin{align}
	|a| \geq \left(\frac{n}{p_r}\right)^{1/2} \cdot \frac{1}{\Phi(n/p_r,\lfloor l/2\rfloor)} \hspace{5mm}\text{or} \hspace{5mm} 	|b| \geq \left(\frac{n}{p_r}\right)^{1/2} \cdot \frac{1}{\Phi(n/p_r,\lfloor l/2\rfloor)} \label{9.3}
\end{align}
Indeed, since either $a$ or $b$ is the sum of at most $\lfloor l/2 \rfloor$ lattice points on the circle of radius $(n/p_r)^{1/2}$,  the induction hypothesis, with $2\epsilon$ instead of $\epsilon$,  gives that the number of exceptions is at most 
\begin{align}
	\sum_{p_r\leq X/2^r} \mathcal{F}_{r-1}\left( \frac{X}{p_r}, \lfloor l/2\rfloor\right)&\ll X \sum_{ p\leq X/2} \frac{1}{p(\log X/p)^{1/2+\epsilon}} \nonumber \\
	&\ll_{\epsilon} \frac{X}{(\log X)^{1/2+\epsilon}}\left( \sum_{p\leq X} \frac{1}{p} + \frac{1}{\log X} \sum_{p\leq X}\frac{\log p}{p}\right) \nonumber \\
	&\ll \frac{X \log \log X}{(\log X)^{1/2+\epsilon}} \ll  \frac{X}{(\log X)^{1/2+\epsilon/2}} \nonumber
	\end{align}
where, in the second but last line, we have used the expansion $(\log(X/p))^{1/2+\epsilon}= \log X^{1/2+\epsilon}(1+O_{\epsilon}(\log p/\log X))$. Thus, we have proved the claim and we may assume that \eqref{9.3} holds. 

Without loss of generality we might assume that $|a|\geq |b|$, so that, inserting \eqref{9.3} into \eqref{9.1}, we obtain 
	\begin{align}
	0< \left| \mathcal{P}_r - \overline{\mathcal{P}_r} \frac{a}{b}\right|\leq p_r^{1/2} \frac{\Phi (n/p_r,\lfloor l/2 \rfloor)}{ \Phi(n,l)}\leq   p_r^{1/2} \frac{\Phi (n,\lfloor l/2 \rfloor)}{ \Phi(n,l)}=:  p_r^{1/2}\Psi(n).  \nonumber
\end{align} 
Following step by step the computations in the base case with $\Phi$ substituted by $\Psi$, bearing in mind the number of exceptions in claim \eqref{9.3}, we deduce that 
$$ \mathcal{F}_r(l)\ll 2^{l r} \frac{X}{(\log X)^{1/2}}  \cdot \frac{1}{\Psi(X)} + O\left( \frac{X}{(\log X)^{1/2+\epsilon/2}}\right) $$ 
which, in light of \eqref{defPhi} and \eqref{assumptions},  concludes the induction. 
\end{proof}
 We are finally ready to prove Theorem \ref{theorem 3}:
 \begin{proof}[Proof of Theorem \ref{theorem 3} for the square-free case]
 	Let $l\geq 2$ and $X>1$ be given, if $n\in S'$ less then $X$ is such that $\mathcal{Q}(l,n,n^{1/2}/\Phi(n))\neq \emptyset$ then $n\in F_r(l)$ for some $r\geq 1$. Moreover, by Lemma \ref{ERKA}, we may assume that $r\leq \log\log X$. Hence, summing the bound in Proposition \ref{main prop} over the all possible values of $r$, we obtain the Theorem.  
 \end{proof}
\subsection{Proof of Theorem \ref{theorem 1} for the non square-free case}
We are now going to prove Theorem \ref{theorem 3} in the non-square free case. We need the following standard lemma: 
\begin{lem}
	\label{comparison}
	Let $n\in S'$ and let $\omega(n)$ be the number of prime factors of $n$ without multiplicity and $\Omega(n)$ be the number of prime factors of $n$ with multiplicity, then 
	\begin{align}
	\sum_{\substack{n\leq X \\ n\in S'}} |\Omega(n)-\omega(n)|\ll \frac{X}{\sqrt{\log X}} \nonumber
	\end{align}
\end{lem}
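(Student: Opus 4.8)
The plan is to reduce the problem to a standard mean-value estimate for $\Omega(n) - \omega(n)$ over \emph{all} integers $n \le X$, which is classical, and then insert the density restriction $n \in S'$ via Landau's theorem \eqref{landau}. First I would observe that $\Omega(n) - \omega(n) = \sum_{p^k \| n,\, k \ge 2} (k-1) = \sum_{p^2 \mid n} v_p(n)$-type quantity; more usefully, $\Omega(n) - \omega(n) = \sum_{p} \bigl( v_p(n) - \mathbf{1}_{p \mid n} \bigr) = \sum_{p^2 \mid n} \lfloor \log_p n \rfloor_{\ge 1}$, so that crudely
\begin{align}
\Omega(n) - \omega(n) \le \sum_{\substack{p^j \mid n \\ j \ge 2}} 1. \nonumber
\end{align}
Since $\Omega(n) - \omega(n) \ge 0$ always, the absolute values are harmless, and it suffices to bound $\sum_{n \le X,\, n \in S'} \sum_{p^j \mid n,\, j \ge 2} 1$.

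Next I would swap the order of summation: the inner count is
\begin{align}
\sum_{\substack{n \le X \\ n \in S'}} \sum_{\substack{p^j \mid n \\ j \ge 2}} 1 = \sum_{\substack{p \\ j \ge 2}} \#\{ n \le X : n \in S',\ p^j \mid n \}. \nonumber
\end{align}
For the contribution to be nonzero we need $p \equiv 1 \pmod 4$ (as $n \in S'$ has only such prime factors) and $p^j \le X$. Writing $n = p^j m$ with $m \in S'$ and $m \le X/p^j$, and bounding $\#\{m \le X/p^j : m \in S'\} \ll (X/p^j)/\sqrt{\log(2X/p^j)} \ll (X/p^j)/\sqrt{\log(X/p^j)+1}$ via \eqref{landau} (using the trivial bound $\#\{m \le Y : m \in S'\} \le Y$ when $p^j$ is close to $X$), the total is
\begin{align}
\ll \sum_{p \equiv 1 (4)} \sum_{j \ge 2} \frac{X}{p^j} \cdot \frac{1}{\sqrt{\log X}} \ll \frac{X}{\sqrt{\log X}} \sum_{p} \frac{1}{p(p-1)} \ll \frac{X}{\sqrt{\log X}}, \nonumber
\end{align}
since $\sum_{j \ge 2} p^{-j} = \frac{1}{p(p-1)}$ and $\sum_p \frac{1}{p(p-1)}$ converges. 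A small amount of care is needed to split off the range where $p^j$ is within, say, a factor $100$ of $X$: there the factor $1/\sqrt{\log(X/p^j)}$ is no longer $\ll 1/\sqrt{\log X}$, but in that regime $X/p^j$ is bounded, $p^j \asymp X$ forces $p \gg X^{1/j}$, and $\sum_{j \ge 2} \sum_{p \gg X^{1/j}} (X/p^j) \ll \sum_{j \ge 2} X^{1/j} \ll X^{1/2}$ which is absorbed into the error term; I would handle this dyadically.

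The only mild obstacle is bookkeeping the boundary range $p^j$ near $X$ cleanly so that one genuinely gets $X/\sqrt{\log X}$ and not, say, $X/\sqrt{\log X} \cdot \log\log X$; this is dispatched by the elementary observation that for fixed $j \ge 2$ the primes with $p^j \le X$ satisfy $\sum_p p^{-j} \le \sum_{p \le X^{1/2}} p^{-2} + (\text{tail})$ is absolutely convergent with a uniform constant, so the $j$-sum contributes a geometric series in the exponent and the whole thing telescopes to $O(X/\sqrt{\log X})$. Alternatively, one can simply cite the classical identity $\sum_{n \le X}(\Omega(n) - \omega(n)) = X \sum_p \frac{1}{p(p-1)} + O(\sqrt{X})$ and note that restricting to $S'$ only shrinks the sum while Landau's theorem shows the restricted sum is still of order $X/\sqrt{\log X}$ after re-running the same computation with the weighted count; either route gives the claimed bound.
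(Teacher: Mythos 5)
Your proposal is correct and follows essentially the same route as the paper: write $\Omega(n)-\omega(n)$ as a count over prime powers $p^j\mid n$ with $j\ge 2$, swap the order of summation, apply Landau's theorem \eqref{landau} to the count of $n\le X$ divisible by $p^j$, and use the convergence of $\sum_p p^{-2}$ to conclude the bound $X/\sqrt{\log X}$. Your extra bookkeeping for the range where $p^j$ is comparable to $X$ is a harmless refinement of a point the paper's one-line computation glosses over.
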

\begin{proof}
Using Landau's Theorem \eqref{landau} and following similar calculations as in the proof  of Proposition \ref{main prop}, we obtain
	\begin{align}
	\sum_{\substack{n\leq X \\ n\in S'}} |\Omega(n)-\omega(n)|\leq X\sum_{\substack{p \hspace{1mm}\text{prime} \\ p \equiv 1 \pmod 4}}\sum_{i\geq 2} \frac{1}{p^i (\log(X/p^i))^{1/2}} \ll \frac{X}{\sqrt{\log X}} \sum_p \frac{1}{p^2}\ll \frac{X}{\sqrt{\log X}} \nonumber
	\end{align}
\end{proof}
We are now ready to begin the proof of Theorem \ref{theorem 3} in the non-square free case. 
\begin{proof}[Proof of Theorem \ref{theorem 3} for $n$ not square-free] 
	First, let us define  $\tilde{\mathcal{F}}_r(l)$ to be $\mathcal{F}_r(l)$ but  $\omega(n)$ is substituted by $\Omega(n)$. Observe that, by Lemma \ref{comparison} and with the same notation, we have, for a density of subset of $n\in S'$, $\Omega(n)-\omega(n)\leq \log\log\log n$. Thus, we may assume that $r\leq 2\log \log n$. Thus, as we have seen in the proof of Theorem \ref{theorem 3} for the square-free case, it is enough to show that 
	\begin{align}
		\tilde{\mathcal{F}}_r(l) \ll \frac{X}{\log (X)^{1/2+\epsilon/2}}. \label{claim square-free}
		\end{align}
	
	To prove \eqref{claim square-free},  we may assume, by again Lemma \ref{comparison}, that at most $O(\log\log\log n)$ of the prime divisors of $n$ have multiplicity. Therefore, given $n\in S'$ and letting $p_1<...<p_r$ be its prime factors, by Corollary \ref{size of p_r} we may assume that there exists some $s\geq \log\log n/10$ such that 
	\begin{align}
	p_s \gg \exp( (\log X)^{1/3}) \nonumber 
	\end{align} 
	and moreover $(p_s)^2$ does not divide $n$. Fixing $s$, the proof now proceeds step by step as the proof of Proposition \ref{main prop} in the square-free case. Finally, summing the bound in Proposition \ref{main prop} over the $\log\log n$ choices  for $s$ gives \eqref{claim square-free}, up to changing the values of $\epsilon$. 
\end{proof}
\section{Random model for lattice points, proof of Theorem \ref{thm random}}
\label{Random model}
As discussed in section \ref{NT background}, we may assume that a generic integer $n\in S'$ has $\omega(n)\asymp \log\log n$ prime factors, most of which are not repeated by Lemma \ref{comparison}, and the distribution of the angle of Gaussian prime is uniform in $[0,2\pi)$. Thus, we may model representations of a generic integer $n\in S'$ as 
\begin{align}
\xi_i= \exp\left( 2\pi i \sum_{k=1}^{\omega(n)}\eta_{ik}\theta_k\right) \label{model}
\end{align}
where $\theta_k$'s are i.i.d random variables uniformly distributed on $[0,1)$ and $\eta_{ik}\in \{-1,1\}$ are deterministic, see also \cite[Remark 3.3]{GW}. This gives $2^{\omega(n)}\asymp N$ representations of $n$. Given $l\geq 2$, we define the random variables 
\begin{align}
X_{\underline{i}}= \xi_{i_1}+...+\xi_{i_l}\nonumber
\end{align}
for $1\leq i_j\leq N$. Before proving Theorem \ref{thm random}, we need two preliminary results. 
\subsection{Distribution and independence of the $\xi_i$}
In this section we show that the random $\xi_i$ as in \eqref{model} have the same distribution and are \textquotedblleft generically \textquotedblright independent.
\begin{lem}
	\label{distribution}
	Let $\xi_i$ be as in \eqref{model} and let $r\in \mathbb{Z}$ be some integer, then 
	\begin{align}
	\mathbb{E}[ \xi_i^r] = \begin{cases}
	1 & r=0 \\
	0 & r\neq 0 
	\end{cases} .\nonumber
	\end{align}
\end{lem}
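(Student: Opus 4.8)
The plan is to compute $\mathbb{E}[\xi_i^r]$ directly from the explicit representation \eqref{model}. Writing $\xi_i = \exp\left(2\pi i \sum_{k=1}^{\omega(n)}\eta_{ik}\theta_k\right)$, we immediately get
\begin{align}
\xi_i^r = \exp\left(2\pi i r\sum_{k=1}^{\omega(n)}\eta_{ik}\theta_k\right) = \prod_{k=1}^{\omega(n)} e(r\eta_{ik}\theta_k), \nonumber
\end{align}
so that, by independence of the $\theta_k$, we have $\mathbb{E}[\xi_i^r] = \prod_{k=1}^{\omega(n)}\mathbb{E}[e(r\eta_{ik}\theta_k)]$.

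The next step is to evaluate each factor. Since $\theta_k$ is uniform on $[0,1)$, for any nonzero integer $m$ we have $\mathbb{E}[e(m\theta_k)] = \int_0^1 e(m t)\,dt = 0$, while for $m=0$ the integral equals $1$. If $r=0$, then every factor is $\mathbb{E}[e(0)] = 1$, giving $\mathbb{E}[\xi_i^0]=1$. If $r\neq 0$, then since $\eta_{ik}\in\{-1,1\}$ we have $r\eta_{ik}\neq 0$ for every $k$, so every factor vanishes; as $\omega(n)\geq 1$, the product is $0$. This establishes the claimed formula.

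There is essentially no obstacle here: the only mild point to be careful about is that $\omega(n)\geq 1$ (equivalently that there is at least one Gaussian prime in play), which is automatic for the integers $n$ under consideration, so that the product over $k$ is nonempty and genuinely forces the vanishing when $r\neq 0$. I would state the computation in one short displayed line per case and conclude.
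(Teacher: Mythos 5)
Your proof is correct and is essentially identical to the paper's: both factor $\mathbb{E}[\xi_i^r]$ over the independent $\theta_k$ and use $\int_0^1 e(r\eta_{ik}\theta_k)\,d\theta_k = 0$ for $r\neq 0$ (since $\eta_{ik}=\pm 1$). Your remark that $\omega(n)\geq 1$ is needed is a fair, if minor, point of extra care beyond the paper's one-line argument.
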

\begin{proof}
	Since the $\theta_k$ are independent, we have 
	\begin{align}
	\mathbb{E}[\xi_i^r]= \prod_{k}\int_0^1 e\left(r \eta_{jk}\theta_k\right) d\theta_k \nonumber
	\end{align}
	which gives the lemma as $\int  e\left(r \eta_{jk}\theta_k\right) d\theta_k=0$ unless $r=0$. 
\end{proof}
\begin{lem}
	\label{independence}
	Let $l \geq 2$ and let $\xi_1,...\xi_l$ be as in \eqref{model}. Then $\xi_1,...\xi_l$ are independent for all but $o_{\omega(n)\rightarrow \infty}(2^{\omega(n)l})$ choices of $l$-tuples $(\xi_1,...,\xi_l)$. 
\end{lem}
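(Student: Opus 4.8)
The plan is to show that a generic $l$-tuple $(\xi_1,\dots,\xi_l)$ of the form \eqref{model} is built from Gaussian primes whose angles $\theta_k$ involve, for each index $i$, a ``private'' prime not shared with the other indices; once this combinatorial configuration holds, independence follows immediately from the independence of the $\theta_k$. Concretely, recall that each $\xi_i=\exp(2\pi i\sum_{k=1}^{\omega(n)}\eta_{ik}\theta_k)$, so $\xi_i$ depends only on the set of coordinates $k$ with $\eta_{ik}\neq 0$ — but in the square-free model every $\eta_{ik}\in\{-1,1\}$, so each $\xi_i$ formally depends on all $\theta_k$. The correct notion of ``private prime'' is therefore at the level of the \emph{differences} in sign patterns: I would instead encode $\xi_i$ by its exponent vector $\eta_i=(\eta_{i1},\dots,\eta_{i\omega(n)})\in\{-1,1\}^{\omega(n)}$ and observe that $(\xi_1,\dots,\xi_l)$ are independent as soon as the vectors $\eta_1,\dots,\eta_l$ are linearly independent over $\mathbb{F}_2$ after the identification $\{-1,1\}\cong\mathbb{F}_2$; more precisely, writing $\varepsilon_{ik}=(1-\eta_{ik})/2\in\{0,1\}$, the tuple is independent whenever no nontrivial $\mathbb{F}_2$-combination $\sum_{i\in T}\varepsilon_i$ is the zero (or all-ones) vector, because then the characters $\prod_i\xi_i^{r_i}$ have nonzero frequency in some $\theta_k$ for every nonzero integer vector $(r_1,\dots,r_l)$, and by Lemma \ref{distribution} the relevant mixed moments factor. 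The first step is to make this reduction precise: independence of $\xi_1,\dots,\xi_l$ is equivalent to $\mathbb{E}[\prod_i\xi_i^{r_i}]=\prod_i\mathbb{E}[\xi_i^{r_i}]$ for all $(r_1,\dots,r_l)\in\mathbb{Z}^l$, and by Lemma \ref{distribution} the right side vanishes unless all $r_i=0$, so it suffices that $\sum_i r_i\eta_{ik}\neq 0$ for \emph{some} $k$ whenever $(r_1,\dots,r_l)\neq 0$.

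Second, I would count the bad tuples. A tuple is bad only if there is a nonzero $(r_1,\dots,r_l)\in\mathbb{Z}^l$ with $\sum_i r_i\eta_{ik}=0$ for \emph{all} $k\in\{1,\dots,\omega(n)\}$. Reducing modulo $2$, this forces the $\mathbb{F}_2$-vectors $\bar\eta_1,\dots,\bar\eta_l$ (images of $\eta_i$) to be $\mathbb{F}_2$-linearly dependent — or, in the original language, the $\varepsilon_i$-vectors together with the all-ones vector span a subspace of dimension $\le l$ rather than $l+1$. Since each $\eta_i$ ranges over $\{-1,1\}^{\omega(n)}$ uniformly and independently in the model (as $\xi_i$ is a uniformly random representation, i.e. a uniformly random choice of the splitting $\mathcal{P}_k$ vs. $\overline{\mathcal{P}_k}$ at each $k$), the probability that $\bar\eta_1,\dots,\bar\eta_l$ are linearly dependent over $\mathbb{F}_2$ in dimension $\omega(n)$ is at most $\sum_{j=1}^{l} 2^{j-1}/2^{\omega(n)} \le 2^{l}/2^{\omega(n)} = o(1)$ as $\omega(n)\to\infty$. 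Multiplying by the total number $2^{\omega(n)l}$ of tuples gives the bound $o(2^{\omega(n)l})$ on the number of bad tuples, which is exactly the claimed statement. (One has to be slightly careful that linear dependence over $\mathbb{F}_2$ is genuinely necessary and not merely necessary-over-$\mathbb{Z}$; but any integral relation $\sum r_i\eta_{ik}=0$ holding for all $k$ with not all $r_i$ even can be reduced to a nontrivial $\mathbb{F}_2$-relation, and if all $r_i$ are even we divide through by $2$ and repeat, so a minimal nontrivial integral relation yields a nontrivial $\mathbb{F}_2$-relation.)

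The main obstacle I anticipate is purely bookkeeping: translating ``independence of the $\xi_i$ as random variables on the circle'' cleanly into the Fourier/character condition $\mathbb{E}[\prod\xi_i^{r_i}]=\prod\mathbb{E}[\xi_i^{r_i}]$ and checking that this moment condition for all integer vectors $(r_1,\dots,r_l)$ really does imply independence of the $\xi_i$ — this is standard (the characters $z\mapsto z^r$ separate points and the tuple takes values in a compact abelian group, so equality of all mixed moments with the product of marginals forces the joint law to be the product law), but should be stated with a line of justification. The only other subtlety is confirming that the model indeed makes the $\eta_i$ independent and uniform over $\{-1,1\}^{\omega(n)}$; this follows from the description in \eqref{7} and the surrounding discussion in section \ref{NT background}, where each representation corresponds to an independent choice of $\gamma_{k}\in\{0,1\}$ (equivalently $\eta_{ik}\in\{-1,1\}$) at each split prime. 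With these two points nailed down, the counting in the second step is immediate and the lemma follows.
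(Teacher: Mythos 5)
Your proposal is correct, and its first half coincides with the paper's own argument: both reduce independence of $(\xi_1,\dots,\xi_l)$ to factorization of the mixed moments $\mathbb{E}\bigl[\prod_i \xi_i^{r_i}\bigr]$, which by Lemma \ref{distribution} amounts to showing that no nonzero integer vector $(r_1,\dots,r_l)$ lies in the kernel of the sign matrix $\{\eta_{ik}\}$. Where you genuinely diverge is in counting the degenerate sign patterns. The paper treats the $\eta_{ik}$ as i.i.d.\ random signs and proves, by an explicit two-rows-at-a-time row-reduction with probabilistic estimates, that the $l\times\omega(n)$ matrix has full rank $l$ with probability $1-o(1)$; you instead pass to $\mathbb{F}_2$: a minimal integral relation, after dividing out powers of $2$ and writing $\eta_{ik}=1-2\varepsilon_{ik}$, forces a nonempty set $T$ with $\sum_{i\in T}\varepsilon_i\in\{0,\mathbf{1}\}$, and a union bound then shows that at most $O\bigl(2^{l}\,2^{-\omega(n)}\bigr)$ of the $2^{\omega(n)l}$ tuples can be bad. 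This is cleaner than the paper's rank computation and yields an explicit exponentially small proportion of exceptional tuples rather than a bare $o(1)$. Two small corrections: reducing the $\eta_i$ themselves modulo $2$ is vacuous, since every $\bar\eta_i$ is the all-ones vector, so the operative condition is exactly the one you state in terms of the $\varepsilon_i$ together with $\mathbf{1}$ (dependence in $\mathbb{F}_2^{\omega(n)}/\langle\mathbf{1}\rangle$); accordingly the union bound should run over spans containing $\mathbf{1}$, of size at most $2^{j}$, giving $\le 2^{l+1}2^{-\omega(n)}$ — an immaterial factor of $2$. With that rephrasing your argument is complete and proves the lemma.
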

\begin{proof}
	Since $|\xi_i|\leq 1$ surely, the joint distribution of $(\xi_1,...,\xi_l)$ is fully determinated by its moments. Thus, it is enough to prove that given $l$ integers $m_1,...m_l$, we have 
	\begin{align}
	\mathbb{E}\left[ \prod_{i=1}^{l}\xi_i^{m_i}\right]= \begin{cases}
	1 &m_1=m_2=...=m_l=0 \\
	0 & \text{otherwise}.
	\end{cases}
	\end{align}
	Observe that 
	\begin{align}
	\mathbb{E}\left[\prod_{i=1}^{l} \xi_i^{m_i}\right]= \mathbb{E}\left[ \exp\left(2\pi i \sum_k (\sum_{i=1}^l m_i\eta_{ik})\phi_k \right)\right],
	\end{align}
	integrating, we obtain that $m_1 \eta_{1k}+...+ m_l\eta_{lk}=0$ for all $k$. Therefore $(m_1,...,m_l)$ is in the kernel of the matrix $\{\eta_{ik}\}$ for $1\leq i \leq l$ and $1\leq k \leq \omega(n)$. Hence, it is enough to prove the following claim:
	\begin{claim}
		Consider the $l\times\omega(n)$ random matrix with entry $\eta_{ik}=1$ with probability $1/2$ and $\eta_{ik}=-1$ with probability $1/2$. Then with probability greater than $1-o_{\omega(n)\rightarrow \infty}(1)$, the matrix $\{\eta_{ik}\}$ for $1\leq i\leq l$ and $ 1\leq k \leq \omega(n)$ has rank $l$. 
	\end{claim} 
	\begin{proof}
		Consider the first two rows $\{\eta_{1k}\}$ and $\eta_{2k}$, outside a set $\Omega_1$ of probability at most $2 \cdot 2^{-\omega(n)}$ there exists $k_1$ and $k_2$ such that $\eta_{1k_1}=\eta_{2k_1}$ and $\eta_{1k_2}\neq \eta_{2k_2}$. Therefore, applying row and columns operations, we reduce the matrix as 
		\begin{align}
		&\begin{bmatrix}
		1& 1 &... \\
		-1 & 1 &... \\
		... \\
		... 
		\end{bmatrix} \hspace{20mm}\rightarrow 	&\begin{bmatrix}
		1& 1 &... \\
		0 & 2 &... \\
		0& 0 &... \\
		0 &0 &...
		\end{bmatrix}. \nonumber
		\end{align}
		Consider now rows $\{\eta_{3k}\}$ and $\{\eta_{4k}\}$. By the above, we have $\eta_{3k}=\eta_{4k}=0$ for $k=1,2$, and for $k\geq 3$ the entries are $\{-2,0,2\}$ with probability $1/4$, $1/2$ and $1/4$ respectively. Therefore, outside a set $\Omega_2$ of probability at most $2\cdot 2^{-\omega(n)+2}$, there exists some $k_3,k_4\geq 3$ such that $\eta_{3k_3}= \eta_{4k_3}\neq 0$ and $\eta_{3k_4}\neq \eta_{4k_4}$.
		Therefore, applying row and columns operations, we have one of the following matrices 
		\begin{align}
		&\begin{bmatrix}
		1& 1 &... \\
		0 & 2 &... \\
		0& 0 & 2 & 2 &... \\
		0 &0 & 0 & 2&... \\
		... \\ 
		\end{bmatrix} \hspace{5mm}\text{or}&\begin{bmatrix}
		1& 1 &... \\
		0 & 2 &... \\
		0& 0 & 2 & 2 &... \\
		0 &0 & 2 & -2&... \\
		... \\
		\end{bmatrix} \hspace{10mm}\rightarrow \hspace{10mm}	&\begin{bmatrix}
		1& 1 &... \\
		0 & 2 &... \\
		0& 0 & 2 & 2 &... \\
		0 &0 & 0 & 4&... \\
		... \\
		\end{bmatrix}. \nonumber
		\end{align} 
		Now consider rows $\{\eta_{5k}\}$ and $\{\eta_{6k}\}$. By the above, we have $\eta_{5k}=\eta_{6k}=0$ for $k\leq 4$, and for $k\geq 5$ the entries are $\{-4,-2,0,2,4\}$ with probability $1/8$, $2/9$,$1/3$, $2/9$ and $1/8$ respectively. Since $l$ is fixed, we can repeat the above argument to find subsets $\Omega_1,...\Omega_{\lceil l/2\rceil}$ such that $\mathbb{P}(\Omega_i)=o(1)$ and outside $\cup_i \Omega_i$, we have $l$ linearly independent columns, as required.   
	\end{proof}
Having proved the claim, the Lemma follows. 
\end{proof}
\subsection{Proof of Theorem \ref{thm random}}
We are finally ready to prove Theorem \ref{thm random}. 
\begin{proof}[Proof of Theorem \ref{thm random}]
	Since $|\xi_i|\leq 1$ surely, we also have $|\re(\xi_i)|= |2^{-1}(\xi_i+\overline{\xi_i})|\leq 1$ and $|\im(\xi_i)|=|2^{-1}(\xi_i-\overline{\xi_i})|\leq 1$. Therefore, by Levi's continuity Theorem \cite[Theorem 26.2]{B}, the distribution of $\xi_i$ is fully determinated by its integer moments. Then Lemma \ref{distribution} implies that each $\xi_i$ have the same distribution and in particular they are uniformly distributed on the unit circle $S^1\subset \mathbb{R}^2$. Given $l\geq 2$ suppose that $\xi_{i_1},...,\xi_{i_l}$ are i.i.d. random variables and identify $S^1$ with the unit interval $[-1/2,1/2)$, then the random variable 
	\begin{align}
	X_{\underline{i}}= \xi_{i_1}+...+\xi_{i_l} \nonumber
	\end{align}
	has Irwin–Hall distribution. In particular, the density function of $X_{\underline{i}}$ is piece-wise analytic and depends only on $l$. Therefore, given any $\alpha>0$, by Taylor's expansion, we  have 
	\begin{align}
	\mathbb{P}(|X_i|\leq \alpha)\asymp_l \alpha (1+O(\alpha)). \label{1.2}
	\end{align}
 By Lemma \ref{independence} we have that $\xi_1,...\xi_l$ are independent for all but $o_{\omega(n)\rightarrow \infty}(2^{\omega(n)l})$ choices of $\xi_1,...\xi_l$. Thus, bearing in mind \eqref{1.2}, we obtain 
	\begin{align}
	\mathbb{E}[\#\{X_i: |X_i|\leq \alpha\}]\asymp_l 2^{\omega(n)l}(\alpha + O(\alpha^2))(1+o_{\omega(n)\rightarrow \infty}(1)) \nonumber
	\end{align}
	which concludes the proof. 
\end{proof}
\section{Proof of Theorem \ref{thm 2}}
The proof of Theorem \ref{thm 2} essentially follows the proof  of the main Theorem in \cite{BMW} and,  for the sake of completeness, we summarise here the main steps.  The main difference is that we explicitly construct sequences of $n\in S$ for which we can control the distribution of lattice points on  $\sqrt{n}S^1$. This is the content of the next section. 
\subsection{Limit points and spectral correlations}
In this section, we prove the following proposition: 
\begin{prop}
	\label{limit point}
	Let $w\in [0,1]$ and $l\geq 2$ be an integer. Then, there exists a sub-sequence of integers $n\in S'$ such that $N\rightarrow \infty$ as $n\rightarrow \infty$, $\hat\mu_n(4)\rightarrow w$,  \eqref{3} hold and $ \overline{\mathcal{Q}}(l,n, c(n)n^{1/2})= \emptyset$ for any function $c(n)\rightarrow 0$ arbitrarily slowly. 
\end{prop}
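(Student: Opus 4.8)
The plan is to produce the sequence by hand, as a prime power with a slowly growing exponent (lightly decorated for a sparse set of parameters $w$), using Kubilius's equidistribution of Gaussian primes to dial in the single free angle, Lemma \ref{heavy} to obtain \eqref{3} for free, and the $S$-unit machinery behind Lemma \ref{heavy} to rule out quasi-correlations. \emph{Construction.} Fix $w\in[0,1]$ and choose $c=c(w)\in[0,\pi]$ with $\sin c/c=w$ (so $c=0$ for $w=1$ and $c=\pi$ for $w=0$; this is possible since $\sin c/c$ decreases from $1$ to $0$ on $[0,\pi]$). Let $K_j\to\infty$. By Lemma \ref{Kubilius} there is, for all large $j$, a prime $p_j\equiv 1\pmod 4$ with a Gaussian prime $\mathcal P_j\mid p_j$ whose argument $\phi_j=\arg\mathcal P_j$ satisfies $|(2K_j+1)\cdot 4\phi_j-c|\le 1/K_j$ (the target arc for $\phi_j$ is short but nonempty, so Kubilius produces such $\mathcal P_j$ once $p_j$ is allowed to be polynomially large in $K_j$). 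Put $n_j=p_j^{2K_j}\in S'$. For the countably many ``resonant'' values of $w$ isolated in the last step, one multiplies $n_j$ in addition by a bounded number of auxiliary primes $\equiv 1\pmod 4$ whose Gaussian angles are chosen, again by Lemma \ref{Kubilius}, converging to a fixed tuple of badly approximable, rationally independent reals whose fourth cosines have product $1$; this disturbs none of the asymptotics and only serves to destroy the coincidences analysed below.

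\textbf{$N\to\infty$, $\hat\mu_n(4)\to w$, and \eqref{3}.} Since $n_j=p_j^{2K_j}$ we have $N(n_j)=4(2K_j+1)\to\infty$; the normalised lattice points on $\sqrt{n_j}\,S^1$ are $i^s e^{i(2m-2K_j)\phi_j}$ with $0\le m\le 2K_j$, $0\le s\le 3$, so a geometric sum gives
\[
\hat\mu_{n_j}(4)=\frac{1}{2K_j+1}\cdot\frac{\sin\!\big((2K_j+1)4\phi_j\big)}{\sin 4\phi_j}\ \longrightarrow\ \frac{\sin c}{c}=w .
\]
Because $n_j$ is a single prime raised to the power $2K_j$, the hypothesis $\sum_i\log(\alpha_i+1)/r=\log(2K_j+1)\to\infty$ of Lemma \ref{heavy} holds, so \eqref{3} follows with error $O(N^{\gamma l})$, $\gamma<1/2$. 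The bounded decoration changes neither conclusion.

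\textbf{No nondegenerate quasi-correlations.} Given lattice points $\xi_a=i^{s_a}\mathcal P_j^{m_a}\overline{\mathcal P_j}^{2K_j-m_a}$ for $1\le a\le l$, set $M=\min_a m_a$, $M'=2K_j-\max_a m_a$, $d=2K_j-M-M'$; pulling out $\mathcal P_j^{M}\overline{\mathcal P_j}^{M'}$ yields the exact identity
\[
\frac{|\xi_1+\cdots+\xi_l|}{\sqrt{n_j}}=\Big|\sum_{a=1}^{l}i^{s_a}e^{i\nu_a\phi_j}\Big|,\qquad \nu_a\in\{-d,-d+2,\dots,d\},\ \ \min_a\nu_a=-d,\ \ \max_a\nu_a=d .
\]
So a length-$l$ quasi-correlation of size $\le c(n_j)\sqrt{n_j}$ is exactly a small but nonzero sum of $l$ unit vectors of the shape $\zeta e^{i\nu\phi_j}$ with $\zeta^4=1$; squaring, it is a sum of at most $\binom{l+1}{2}$ elements of the rank-one group $\mu_4\cdot\langle \mathcal P_j/\overline{\mathcal P_j}\rangle\subset\mathbb Q(i)^\times$. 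By the theorem of Evertse--Schlickewei--Schmidt (the same input used for Lemma \ref{heavy}) there are only $O_l(1)$ nondegenerate \emph{vanishing} such sums, and by compactness every small nonzero one lies $o(1)$-close to a degenerate vanishing configuration of unit vectors, i.e.\ to a union of antipodal pairs and — only when $d$ is as large as $\asymp K_j$ — equilateral triples. Any configuration other than a union of antipodal pairs forces a Diophantine coincidence of the form $\|(\nu_a-\nu_b)\phi_j-\tfrac{r}{6}\pi\|=o(1)$ with $|\nu_a-\nu_b|\le 4K_j$ (respectively, the analogue in the decorating angles), and the choice of $\mathcal P_j$ (resp.\ of the decorating primes) is made precisely to avoid these. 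Hence every tuple with $0<|\xi_1+\cdots+\xi_l|\le c(n_j)\sqrt{n_j}$ is $o(1)$-close to a union of antipodal pairs — that is, close to the diagonal — and therefore does not lie in $\overline{\mathcal Q}(l,n_j,c(n_j)\sqrt{n_j})$, which is thus empty for every $c(n)\to 0$.

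\textbf{Main obstacle.} The delicate point is this last step: one must rule out \emph{all} genuinely nondegenerate small $S$-unit relations of length $\le l$ for a single (necessarily large) prime $p_j$, which forces a combination of the Evertse--Schlickewei--Schmidt finiteness, a compactness argument on configurations of $l$ unit vectors, and the freedom — supplied by Kubilius after discarding a null set of resonant angles — in choosing $\arg\mathcal P_j$, all while keeping $(2K_j+1)\arg\mathcal P_j\to c/4$ so as not to disturb the limit $\hat\mu_n(4)\to w$. It is exactly the tension between this angle constraint and the genericity needed for the quasi-correlation estimate that dictates the two-scale (prime power $\times$ bounded decoration) shape of the construction.
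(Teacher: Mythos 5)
Your construction parallels the paper's in its first half (the paper takes $n=p^m q$ rather than a pure prime power; the Fej\'er-type computation of $\hat\mu_n(4)$ and the appeal to Lemma \ref{heavy} for \eqref{3} are essentially the same), but the final step — emptiness of $\mathcal{Q}(l,n,c(n)n^{1/2})$ — contains a genuine gap, and in fact the claim is false for the sequence as you have built it. First, the logical error: $\mathcal{Q}$ excludes only tuples whose sum is \emph{exactly} zero, so showing that every small nonzero sum is ``$o(1)$-close to a union of antipodal pairs'' does not remove it from $\mathcal{Q}$; a tuple near the diagonal with $0<|\xi_1+\cdots+\xi_l|\le c(n)\sqrt n$ is precisely an element of $\mathcal{Q}$. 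Second, such tuples really occur in your construction: since you force $(2K_j+1)\cdot 4\phi_j\to c$, the normalised lattice points have arguments in arithmetic progression with gap $2\phi_j\asymp 1/K_j$, so the antipodal pair $\xi_1=\mathcal P_j^{m}\overline{\mathcal P_j}^{2K_j-m}$, $\xi_2=-\mathcal P_j^{m+1}\overline{\mathcal P_j}^{2K_j-m-1}$ gives $0<|\xi_1+\xi_2|=2\sqrt{n_j}\,\sin\phi_j\asymp \sqrt{n_j}/K_j$. You fixed $K_j\to\infty$ before $c(\cdot)$ was given, so any $c(n)$ with $c(n_j)\gg 1/K_j$ (e.g. $c(n_j)=K_j^{-1/2}$) makes $\mathcal{Q}(2,n_j,c(n_j)\sqrt{n_j})\neq\emptyset$; no amount of Diophantine genericity of $\phi_j$ inside the mandated arc of width $\asymp K_j^{-2}$ can avoid these near-diagonal pairs. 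Third, the Evertse--Schlickewei--Schmidt theorem counts exact solutions of $S$-unit equations and, combined with a soft compactness argument, yields no quantitative lower bound on nonzero sums, which is what the statement actually requires.

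What is missing is the mechanism the paper uses to resolve exactly the tension you flag at the end: for each \emph{fixed} exponent one must prove an explicit positive lower bound $\delta(m,l)>0$ for all nonzero sums, and only then let the exponent grow slowly, \emph{after} $c(\cdot)$ is given, so that $\delta>c(n)$ along the sequence. The paper achieves the lower bound by taking $n=p^mq$ with a second prime $q$: each candidate sum is $\sqrt n\,F_k(\theta_p)$ with $F_k$ a trigonometric polynomial of degree $\le m$ whose coefficients involve $\theta_q$, the angle $\theta_q$ is chosen (via Kubilius) so that some coefficient — hence $\max_{[0,1]}|F_k|$ — is bounded below, and Remez' inequality (Lemma \ref{NT}) transfers this to a lower bound on the interval of $\theta_p$ where $\hat\mu_n(4)$ is near $w$; finally $\theta_p$ is pinned down by Kubilius and $m$ is taken to grow slowly enough that the resulting $\delta_2$ beats $3c(n)$. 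Your single-prime-power construction lacks both the auxiliary degree of freedom ($q$) needed to keep the coefficients away from zero and the $m$-after-$c$ tuning, and without them the emptiness claim cannot hold.
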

In order to prove Proposition \ref{limit point}, we need a two preliminary results. The first is Lemma \ref{heavy} in section \ref{NT background}. The second is a standard tool to control the size of exponential sums, see \cite{N}.
\begin{lem}[Remez' inequality]
	\label{NT}
	Let $F(t)=\sum_{i=1}^{J}a_{\eta}e(\eta_i \cdot t)$ where $t\in \mathbb{R}$, $J\in \mathbb{N}$, $a_{\eta}\in \mathbb{C}$ and suppose that $\eta_i \in \mathbb{R}$ are distinct.  Then, for any interval $B\subset \mathbb{R}$ and any sub-interval $I\subset B$, we have 
	\begin{align}
	\underset{ I}{\sup} |F|>\left(C\frac{|I|}{|B|}\right)^{J-1} 	\underset{B}{\sup} |F|. \nonumber
	\end{align}
	for some explicit $C>0$ independent of $F$.	 
\end{lem}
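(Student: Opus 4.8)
The statement is the Turán--Nazarov (Remez-type) inequality for exponential polynomials, and the plan is to obtain it from Lagrange interpolation together with an estimate on generalized Vandermonde determinants; alternatively one may simply quote it from \cite{N}. After an affine change of variable we may assume $B=[0,1]$ and $I\subset[0,1]$ with $|I|=\delta$, so that the claim becomes $\sup_{[0,1]}|F|\le (C\delta)^{-(J-1)}\sup_I|F|$ for an absolute $C>0$, where $F(t)=\sum_{j=1}^J a_j e(\eta_j t)$ with the $\eta_j$ distinct.

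First I would fix $J$ nodes $t_1<\dots<t_J$ inside $I$ and use that $F$ lies in the $J$-dimensional span $V=\mathrm{span}\{e(\eta_1 t),\dots,e(\eta_J t)\}$: for every $t^\ast\in[0,1]$ the $(J+1)\times(J+1)$ determinant with first column $(F(t^\ast),F(t_1),\dots,F(t_J))^{\mathsf T}$ and remaining columns $(e(\eta_j t^\ast),e(\eta_j t_1),\dots,e(\eta_j t_J))^{\mathsf T}$ vanishes, the first column being $\sum_j a_j$ times the others. Expanding along the first column gives $F(t^\ast)=\sum_{m=1}^J \pm (D_m/D_0)F(t_m)$, where $D_0=\det[e(\eta_j t_m)]_{m,j}$ and $D_m$ is the same determinant with the $t_m$-row replaced by the $t^\ast$-row; hence $\sup_{[0,1]}|F|\le (\sup_I|F|)\sum_m|D_m/D_0|$, and everything reduces to proving $\sum_m |D_m/D_0|\ll (C\delta)^{-(J-1)}$.

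To estimate the determinants I would take the nodes equally spaced with step $h\le \delta/(J-1)$, so that $D_0$ collapses to a true Vandermonde in $w_j:=e(\eta_j h)$, i.e. $|D_0|=\prod_{j<k}|w_k-w_j|$, while each $D_m$, expanded along the $t^\ast$-row and simplified via the classical formula for a generalized Vandermonde determinant whose exponent multiset is a staircase with one entry replaced, becomes a sum over $j$ of $w_j^{s^\ast}$ (where $s^\ast=(t^\ast-t_1)/h$, so $|w_j^{s^\ast}|=1$ and $|s^\ast|\le \delta^{-1}$) times an elementary symmetric polynomial in the $w_l$ times a sub-Vandermonde. Combining numerator and denominator, the bound comes down to controlling $\sum_j \prod_{l\ne j}|w_j-w_l|^{-1}$ together with the dependence on $s^\ast$, and it is here that the power $J-1$ and the factor $\delta^{-(J-1)}$ emerge. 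The main obstacle is uniformity in $J$ and in the frequencies: when several $\eta_j$ are clustered, the $w_j$ nearly coincide, $D_0$ is tiny, and the crude cofactor bounds discard the compensating smallness of the numerator; to handle this one either chooses $h$ and the nodes more carefully or passes to a confluent limit that replaces a cluster of exponentials by a derivative system of the same dimension. Because this inequality is standard, the economical choice for the paper is to cite \cite{N} and absorb its explicit constant into $C$, the interpolation argument above serving to explain why the correct exponent is $J-1$ rather than $J$.
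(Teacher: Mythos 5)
The paper offers no proof of this lemma at all: it is quoted verbatim as a known result of Nazarov, with the citation \cite{N} doing all the work. So your bottom line --- ``the economical choice for the paper is to cite \cite{N}'' --- coincides exactly with what the paper does, and if one treats the citation as the proof, your proposal is acceptable on those grounds alone.

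Taken as an actual proof sketch, however, your interpolation argument has a genuine gap, and you half-admit it yourself. Writing $F(t^\ast)=\sum_m \pm (D_m/D_0)F(t_m)$ and bounding $\sum_m|D_m/D_0|$ with equally spaced nodes $t_m=t_1+(m-1)h$ produces a Vandermonde $|D_0|=\prod_{j<k}\left|e(\eta_j h)-e(\eta_k h)\right|$ whose size depends on the frequencies, not only on $J$ and $|I|/|B|$; it can even vanish identically when some $\eta_j-\eta_k$ is an integer multiple of $h^{-1}$, so the scheme does not merely lose constants --- it can break down outright. The whole content of the Turán--Nazarov inequality is precisely that the constant is \emph{uniform} in the $\eta_j$, and this is the part your sketch leaves unresolved: the suggested remedies (``choose $h$ and the nodes more carefully'', ``pass to a confluent limit'') handle exact coincidences of frequencies but not the quantitative near-degenerate regime, which is where Nazarov's actual argument (Tur\'an-type lemmas for exponential sums combined with distribution-function estimates, not Lagrange interpolation with crude cofactor bounds) is needed. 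In short: citing \cite{N} is fine and is what the paper does; the interpolation route, as described, does not constitute an alternative proof.
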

We are now ready to prove Proposition \ref{limit point}. 
\begin{proof}[Proof of Proposition \ref{limit point}]
	We pick integers of the form $n=p^m \cdot q$ for some primes $p,q\equiv 1 \pmod 4$ and $m\geq 1$ to be chosen later. Let $\theta_p,\theta_q$ be the angle of the Gaussian prime lying above $p$ and $q$ respectively. Observe that, by Lemma \ref{heavy} and \eqref{divisor bound},  if  $m\rightarrow \infty$ as $n\rightarrow \infty$, then  both \eqref{3} and $N\rightarrow \infty$ are satisfied. So we assume that $m$ is a sufficiently slow growing function of $n$ to be specified later. The rest of the proof relies on two claims. 
	\begin{claim}
		\label{claim 1}
		Let $\epsilon>0$. Then, there exist $\delta_1=\delta_1(\varepsilon)>0$ and some interval $I=I(\epsilon,m) \subset [0,2\pi)$ such that for all $\theta_q\in(0,\delta_1)$ and $\theta_p\in I$, we have
		\begin{align}
		|\hat{\mu}_n(4)-w| \leq \epsilon. \label{16}
		\end{align}
	\end{claim}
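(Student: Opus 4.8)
\textbf{Proof proposal for Claim \ref{claim 1}.}

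The plan is to compute $\hat\mu_n(4)$ explicitly in terms of the angles $\theta_p,\theta_q$ and then choose the intervals so that this expression can be tuned to land within $\epsilon$ of any target $w\in[0,1]$. First I would use the ideal factorisation \eqref{7} for $n=p^m\cdot q$: the lattice points on $\sqrt n S^1$ are indexed by the exponents $0\le\gamma\le m$ on the split prime $\mathcal{P}$ above $p$, together with the fixed common factor coming from $q$ (which, as noted after \eqref{divisor bound}, only rotates the configuration and does not affect $|\hat\mu_n(4)|$, only its argument). Writing each unit vector as $e(4\arg\xi)$, one finds
\begin{align}
\hat\mu_n(4)=\frac{1}{N}\sum_{|\xi|^2=n}e(4\arg\xi)= e(4\theta_q)\cdot\frac{1}{m+1}\sum_{\gamma=0}^{m}e\bigl(4(m-2\gamma)\theta_p\bigr)+o(1),\nonumber
\end{align}
where the $o(1)$ accounts for the symmetrisation constants in \eqref{divisor bound} and is uniform. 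The inner average is a Dirichlet-type kernel: it equals $\frac{1}{m+1}\cdot\frac{\sin\bigl(4(m+1)\theta_p\bigr)}{\sin(4\theta_p)}$ up to a unimodular factor, hence its modulus is a continuous function of $\theta_p$ ranging over all of $[0,1]$ as $\theta_p$ varies (it is $1$ at $\theta_p=0$ and drops continuously, in fact taking every value in $[0,1]$ on any neighbourhood of $0$ once $m$ is large).

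Next I would exploit the two free parameters separately. Since $m$ is a slowly growing function of $n$ that I get to specify, for $m$ large the map $\theta_p\mapsto\bigl|\tfrac{1}{m+1}\sum_\gamma e(4(m-2\gamma)\theta_p)\bigr|$ is continuous, equals $1$ at $\theta_p=0$, and attains every value in $[0,1]$; so by the intermediate value theorem there is a closed interval $I=I(\epsilon,m)\subset[0,2\pi)$ of positive length on which this modulus lies within $\epsilon/2$ of $w$. Then, having fixed the modulus, I use $\theta_q$ to kill the phase: $\hat\mu_n(4)$ is $e(4\theta_q)$ times a complex number of modulus close to $w$, and as $\theta_q$ ranges over $(0,\delta_1)$ the argument $4\theta_q$ sweeps an interval, so I can choose $\delta_1=\delta_1(\epsilon)$ and restrict $\theta_q\in(0,\delta_1)$ so that the resulting phase rotates $\hat\mu_n(4)$ to within $\epsilon/2$ of the real value $w$. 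Combining the two estimates via the triangle inequality gives $|\hat\mu_n(4)-w|\le\epsilon$ as required. (The case $w=0$ needs a word: there one picks $\theta_p$ at a zero of the kernel, i.e. $\theta_p=j/(4(m+1))$ for suitable $j$, and takes $I$ a small interval around it; the phase adjustment via $\theta_q$ is then unnecessary.)

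The main obstacle I anticipate is bookkeeping the precise constant and the lower-order terms: the symmetrisation factor $4\prod(\alpha_k+1)$ in \eqref{divisor bound} means the "unit vectors" $e(4\arg\xi)$ come with multiplicities (each ideal and its conjugate, and the swap $x+iy\mapsto y+ix$), so the clean Dirichlet-kernel identity above holds only up to an explicitly computable finite correction that I must check is $o(1)$ as $m\to\infty$; and I must ensure the interval $I$ can be taken with length bounded below independently of $n$ (it can, since it depends only on $\epsilon$ and $m$, and $m$ is fixed before $n$ is sent to infinity), because the later application of Remez' inequality (Lemma \ref{NT}) and Kubilius' theorem (Lemma \ref{Kubilius}) on the sector $\theta_p\in I$, $\theta_q\in(0,\delta_1)$ needs $|I|\delta_1>0$ to produce a positive-density subsequence of admissible $n$. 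Everything else is continuity and the intermediate value theorem.
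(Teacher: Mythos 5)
Your skeleton — a Dirichlet-type kernel in $\theta_p$, a factor coming from $q$ that is close to $1$ when $\theta_q$ is small, then continuity and the intermediate value theorem in $\theta_p$ — is the same as the paper's, but your central displayed formula is wrong because you have misread the role of $q$. In the construction $n=p^m\cdot q$ the prime $q\equiv 1\pmod 4$ \emph{splits}; the remark after \eqref{divisor bound} that you invoke concerns the common factor $Z^{\alpha_2}\prod_v\mathcal{Q}_v^{\beta_v/2}$ built from $2$ and the inert primes $q\equiv 3\pmod 4$, and only such common factors act by a global rotation. Here the representations \eqref{7} involve a choice of $\mathcal{Q}$ or $\overline{\mathcal{Q}}$, so $N=8(m+1)$ rather than $4(m+1)$, and the contribution of $q$ to $\hat{\mu}_n(4)$ is not a unimodular phase $e(4\theta_q)$ but the real factor obtained by averaging the $\mathcal{Q}$ and $\overline{\mathcal{Q}}$ blocks; in the paper's de-symmetrised notation, $\nu_n=\nu_{p^m}\star\nu_q$ and $\hat{\nu}_n(1)=\hat{\nu}_{p^m}(1)\,\hat{\nu}_q(1)$ with $\hat{\nu}_q(1)=\sin(2\theta_q)/(2\sin\theta_q)=\cos\theta_q$. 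Indeed $\hat{\mu}_n(4)$ must be real, since the lattice-point set is conjugation invariant (this is why it appears as a real number in \eqref{variance f_n}); your formula generically has nonzero imaginary part, so it cannot hold, and the step ``use $\theta_q$ to kill the phase'' has nothing to act on. What smallness of $\theta_q$ actually buys — and all that is needed, uniformly for $\theta_q\in(0,\delta_1)$ — is that the real factor $\hat{\nu}_q(1)$ is within $\epsilon/2$ of $1$; that is exactly how the paper argues.

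Two smaller points. First, you should choose $I$ where the kernel value $\sin((m+1)\theta)/((m+1)\sin\theta)$ itself, not its modulus, is within $\epsilon/2$ of $w$: if only the modulus is near $w$ the kernel may be near $-w$, and with the correct (real, near $1$) $q$-factor there is no phase available to repair the sign. Since the kernel equals $1$ at $\theta=0$ and decreases continuously to $0$ at its first zero, the intermediate value theorem gives such an interval for every $w\in[0,1]$ and every $m\ge 1$; no largeness of $m$ is needed for this step. Second, the unit symmetries $\xi\mapsto i^k\xi$ and $\xi\mapsto-\xi$ contribute no error at all at the fourth frequency, because $e(4(\theta+k/4))=e(4\theta)$; so there is no $o(1)$ correction of the kind you describe — the only correction to your formula is the conjugate block coming from $\overline{\mathcal{Q}}$, and it is not $o(1)$: it is precisely what makes $\hat{\mu}_n(4)$ real.
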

	\begin{proof}[Proof of Claim \ref{claim 1}]
		Define the de-symmetrize probability measure on $\mathbb{S}^1$ to be $d\nu_n(\theta)= d\mu_n (\theta/4)$. Then $\hat{\mu}_n(4)= \hat{\nu}_n(1)$ and, by convolution properties of the Fourier transform, we also have 
		\begin{align}
		\nu_n = \nu_{p^m} \star  \nu_{q}. \label{17}
		\end{align} 
		A direct computation shows that 
		\begin{align}
		&\hat{\nu}_{p^m}(1)= \frac{1}{m+1}\sum_{j=0}^m \cos ((m-2j)\theta_p) = \frac{\sin((m+1)\theta_p)}{(m+1)\sin \theta_p} &\hat{\nu}_{q}(1)= \frac{\sin(2\theta_q)}{2\sin\theta_q} \nonumber
		\end{align}
		Thus, using  \eqref{17} and properties of the Fourier Transform, we deduce
		\begin{align}
		\hat{\nu}_n(1)= \hat{\nu}_{p^m}(1)\cdot \hat{\nu}_{q}(1)=\frac{\sin((m+1)\theta_p)}{(m+1)\sin \theta_p}  \frac{\sin(2\theta_q)}{2\sin(\theta_q)}.\label{18}
		\end{align} 
		Observe that, since the function $\sin(2 x)/2\sin(x)$ tends to $1$ as $x\rightarrow 0$ and it is decreasing in a small neighbourhood to the right of $x=0$, we can find some small  $\delta_1=\delta_1(\epsilon)>0$  such that for all $x\in (0, \delta_1)$, we have  $|\sin(2x)/2\sin(x)- 1| \leq\epsilon/2$.   With this choice of $\delta_1$, bearing in mind that $|\sin(m\theta)/(m\sin \theta)|\leq 1$, equation \eqref{18} becomes
		\begin{align}
		\left|\hat{\nu}_n(1) - \frac{\sin((m+1)\theta_p)}{(m+1)\sin\theta_p}\right|\leq \epsilon/2  \label{19}
		\end{align}  
		The claim follows by \eqref{19} and the continuity of  $\sin((m+1)\theta)/((m+1)\sin \theta)$.
	\end{proof}
	Before stating the next claim, we introduce some notation. Observe that are only finitely many (depending on $m$ and $l$) sums $0\neq \sum_{i}^l \xi_i$, let us label them as $S_1$, $S_2$... . By \eqref{7},  we can write 
	\begin{align}
	\xi_i= \sqrt{n}e( a_i\theta_p \pm \theta_q + b_i\pi/2) \label{20}
	\end{align}
	for some $|a_i|\leq m$ and $b_i\in\{0,1,2,3\}$. Thus, collecting terms with equal $a_i$, we can write
	\begin{align}
	S_k= \sqrt{n}\sum_{j=-m}^m c_{jk} e(j \theta_p ):= \sqrt{n}\cdot F_k(\theta_p) \label{21}
	\end{align}
	for some $c_{ik}\in \mathbb{C}$ with $|c_{ik}|\leq 2l$.
	\begin{claim}
		\label{claim 2}
		Let $F_k(\theta)$ be as \eqref{21}. With the notation of Claim \ref{claim 1}, there exists some $q\asymp n^{1/m+1}$ with $\theta_q\in (0,\delta_1)$ and some $\delta_2= \delta_2(\epsilon,m)>0$ such that $\min_k \max_{I} |F_k(\theta)|\geq \delta_2$
	\end{claim}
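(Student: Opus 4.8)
The plan is to choose the Gaussian prime $q$ (equivalently, its argument $\theta_q$) so that, once $\theta_q$ has been substituted, none of the finitely many nontrivial sums $F_k$ of \eqref{21} degenerates, and then to apply Remez' inequality (Lemma \ref{NT}) to pass from a lower bound for $\sup|F_k|$ over a full period to a lower bound over the short interval $I=I(\epsilon,m)$ from Claim \ref{claim 1}.

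First I would record the dependence of the coefficients of $F_k$ on $\theta_q$. By \eqref{20}, grouping the summands of $n^{-1/2}S_k$ with a common value $a_i=j$ and separating those carrying $+\theta_q$ from those carrying $-\theta_q$, the coefficient $c_{jk}$ of \eqref{21} is $c_{jk}=c_{jk}(\theta_q)=A_{jk}e(\theta_q)+B_{jk}e(-\theta_q)$, where $A_{jk},B_{jk}\in\mathbb Z[i]$ are each sums of at most $l$ fourth roots of unity, so $|A_{jk}|,|B_{jk}|\le l$. Since $S_k$ was singled out as \emph{not} identically zero in $\theta_p$, for each $k$ there is an index $j(k)$ with $(A_{j(k)k},B_{j(k)k})\neq(0,0)$. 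The map $\theta_q\mapsto c_{j(k)k}(\theta_q)$ is a degree-one trigonometric polynomial, and from
\begin{align}
|c_{j(k)k}(\theta_q)|^2=|A_{j(k)k}|^2+|B_{j(k)k}|^2+2\re\!\left(A_{j(k)k}\overline{B_{j(k)k}}\,e(2\theta_q)\right) \nonumber
\end{align}
it vanishes for at most two $\theta_q\in[0,1)$, and never when one of $A_{j(k)k},B_{j(k)k}$ is $0$ (a nonzero Gaussian integer has modulus $\ge1$). Let $E$ be the union over $k$ of these exceptional values; as the number of admissible sums $S_k$ is $O_l(m^{O_l(1)})$, the set $E$ is finite of the same size.

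Next I would pin down $\theta_q$. The open set $(0,\delta_1)\setminus E$ is a finite union of intervals; take the longest one and pass to its middle third $J\subset(0,\delta_1)$, so $\overline J$ is a compact subset of $(0,\delta_1)\setminus E$ with $|J|\gg_{\epsilon,l}m^{-O_l(1)}$. On $\overline J$ each continuous strictly positive function $|c_{j(k)k}(\cdot)|$ attains a positive minimum, so $\delta_2':=\min_k\min_{\theta_q\in J}|c_{j(k)k}(\theta_q)|>0$ depends only on $\epsilon,m,l$, and by Parseval $\sup_{\theta_p}|F_k(\theta_p)|\ge\left(\sum_j|c_{jk}(\theta_q)|^2\right)^{1/2}\ge\delta_2'$ for all $k$ and all $\theta_q\in J$. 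Now Lemma \ref{Kubilius}, applied to the sector $J$ and the dyadic range $|\mathcal Q|^2\in(\tfrac12T,T]$ with $T\asymp n^{1/(m+1)}$, produces $\gg|J|\,T/\log T+O(T\exp(-c\sqrt{\log T}))$ Gaussian primes, a quantity that is positive once $m$ grows slowly enough in $n$ (for instance $m\le(\log n)^{1/3}$ makes $\log T\gg(\log n)^{2/3}$ dominate both $\log|J|^{-1}=O(\log m)$ and the error). Fix such a $\mathcal Q$ and set $q=|\mathcal Q|^2\asymp n^{1/(m+1)}$, $\theta_q=\Arg\mathcal Q\in J\subset(0,\delta_1)$.

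Finally, with this $\theta_q$ fixed, I would apply Lemma \ref{NT} to $F_k$ with $B$ a period of $F_k$ in $\theta_p$ (chosen to contain $I$), the subinterval $I=I(\epsilon,m)$ of Claim \ref{claim 1}, and the at most $2m+1$ distinct frequencies of $F_k$, obtaining
\begin{align}
\max_I|F_k|>(C|I|/|B|)^{2m}\sup_B|F_k|\ge\big(C|I|/|B|\big)^{2m}\delta_2'=:\delta_2 \nonumber
\end{align}
for every $k$, where $\delta_2=\delta_2(\epsilon,m)>0$ because $|I|>0$ depends only on $\epsilon$ and $m$. This yields $\min_k\max_I|F_k|\ge\delta_2$, as claimed. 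The hard part is the interplay of the last two paragraphs: one must ensure that a generic $\theta_q$ keeps every admissible $F_k$ bounded away from $0$ — precisely where non-triviality of the $S_k$ is used — while the remaining freedom for $\theta_q$, an interval whose length may decay with $m$, still contains a Gaussian prime of the prescribed size; both are reconciled by letting $m\to\infty$ sufficiently slowly, the one parameter left free in the construction.
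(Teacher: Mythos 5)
Your argument is correct and follows essentially the same route as the paper's proof: isolate for each $k$ a coefficient $c_{j(k)k}(\theta_q)=A e(\theta_q)+B e(-\theta_q)$ with $(A,B)\neq(0,0)$, bound it below after excluding the finitely many degenerate values of $\theta_q$, use the $L^2$ (Parseval) lower bound for $\sup_{[0,1]}|F_k|$, produce the prime $q\asymp n^{1/(m+1)}$ via Lemma \ref{Kubilius} with $m$ growing slowly, and finish with Remez' inequality (Lemma \ref{NT}) to localize to $I$. The only (harmless) difference is that you place $\theta_q$ inside a whole subinterval $J$ avoiding the exceptional set, while the paper fixes a single good angle $\theta_0$ and approximates it by a prime angle to within $O((\log n)^{-100})$, invoking continuity of $F_k$.
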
	
	\begin{proof}
		First, we prove that there exists some $q\asymp n^{1/m+1}$ with $\theta_q\in (0,\delta_1)$ such that  $\max_{[0,1]}|F_k(\theta)|\geq \delta_3$ for some $\delta_3=\delta_3(\epsilon,m,l)$ independent of $k$. Bounding the infinity norm by the $L^2$-norm, we have
		\begin{align}
		\max_{[0,1]}|F_k(\theta)|^2\geq \int |F_k(\theta)|^2 d\theta= \sum_i |c_{ik}|^2.  \label{22}
		\end{align}
		Since $F_k(\theta)$ is not identically zero, there exists some $j_1=j_1(k)$ such that $|c_{j_1k}|=|c_{j_1}|>0$.  By \eqref{20}, we have 
		\begin{align}
		c_{j_1}= \alpha e(\theta_q) + \beta e( -\theta_q) \nonumber
		\end{align}
		for some $\alpha=\alpha(k),\beta=\beta(k) \in \mathbb{Z}[i]$ with $|\alpha|\leq l$ and $|\beta|\leq l$. If $|\alpha|\neq |\beta|$, then $|c_{j_1}|\geq ||\alpha|-|\beta||\geq 1$. If $|\alpha|=|\beta|$, then, writing $\alpha=re(\psi_1)$ and $\beta=re(\psi_2)$, we have  
	$$ |c_{j_1}|= r|e(2\theta_q +\psi_1-\psi_2)+1|=\sqrt{2}r|1+\cos(2\pi(2\theta_q +\psi_1-\psi_2))|. $$
	Since there are at most $O(l^2)$ choices for $\psi_1$ and  at most $O(l^2)$ for $\psi_2$, there exists some $\theta_0=\theta_0(l) \in (0,\delta_1)$ such that 
$$|c_{j_1}| \geq \delta_3$$
for some $\delta_3=\delta_3(\varepsilon,m,l)>0$ independent of $k$. Via Lemma \ref{Kubilius}, we choose $q \asymp n^{1/m+1}$ such that $\theta_q= \theta_0 +O(\log n^{-100})$. Thus, bearing in mind that the continuity of $F_k$ depends only on $m$ and $l$,  together with \eqref{22}, we have $\max_{[0,1]}|F_k(\theta)|\geq \delta_4$, for some $\delta_4=\delta_4(\epsilon,m,l)$. Therefore, applying Lemma \ref{NT} with $B=[0,1]$ and $\Omega= I$, we deduce that
		\begin{align}
		\max_{I} |F_k(\theta)|\geq C^{-m}|I|^{-m}\delta_4:=\delta_2 \nonumber	
		\end{align} 
		uniformly for every $k$ and for some absolute constant $C>0$, as required. 
	\end{proof}
	
	To conclude the proof, we need to choose a sequence of primes $p,q\equiv 1 \pmod 4$ and a function $m=m(n)$ such that $\hat{\mu_n}(4)\rightarrow \omega$ and $\mathcal{Q}(l,n,c(n)n^{1/2})=\emptyset$. Let $\epsilon>0$ and $I$ and $q$ be given by Claim \ref{claim 1} and Claim \ref{claim 2} respectively. Via Claim \ref{claim 2}, let $\theta_1\in I$ be such that $\min_k |F_k(\theta_1)|>\delta_2$. Then, via Lemma \ref{Kubilius}, choose $p\asymp n^{1/(m+1)}$ such that $\theta_p= \theta_1 + O(\log n^{-100})$. Finally, bearing in mind that the continuity of $F_k$ depends only on $m$ and $l$,  choose $m$ slow enough such that $\min_k|F_k(\theta_p)|\geq \delta_2/2$ and $\delta_2/2> 3 c(n)$. With this choices of $p,q$ and $m$, we have 
	\begin{align}
	&|\hat{\mu_n}(4)- \omega|\leq \epsilon &  \min_k|S_k|> c(n) \nonumber
	\end{align}
	as required. 
\end{proof}

\subsection{Small $3$ spectral quasi-correlations}
\label{small spectral} 
In this section, we construct a sequence of $n\in S'$ such that $\mathcal{Q}(l,n,\exp(-c\sqrt{\log n}))\neq \emptyset$, for some absolute constant $c>0$. We pick integers of the form $n=p_1\cdot p_2 \cdot p_3$, where $p_i\equiv 1 \pmod 4$. Let $\theta_{p_i}$ be the angle of the Gaussian prime above $p_i$, then, by Lemma \ref{Kubilius}, we can choose $p_i\asymp n^{1/3}$ so that
\begin{align}
\theta_1= 0 + O\left( \exp(-c_1\sqrt{\log n})\right) \nonumber \\
\theta_2= \frac{\pi}{3} + O\left( \exp(-c_1\sqrt{\log n})\right) \nonumber \\
\theta_3= -\frac{\pi}{3}+ O\left( \exp(-c_1\sqrt{\log n})\right) \nonumber
\end{align}
for some $c_1>0$. Thus, we have the representations 
\begin{align}
&\xi_1= \exp(i(\theta_{p_1}+\theta_{p_2}+\theta_{p_3}))= 1 +  O( \exp (-c\sqrt{\log n})) \nonumber\\
&\xi_2= \exp(i(\theta_{p_1}+\theta_{p_2}-\theta_{p_3}))=\exp(i 2\pi/3) +  O( \exp (-c\sqrt{\log n})) \nonumber\\
&\xi_3= \exp(i(-\theta_{p_1}+\theta_{p_2}-\theta_{p_3}))=\exp(-i 2\pi/3)  +  O( \exp (-c\sqrt{\log n})).\nonumber
\end{align}
Hence,
\begin{align}
|\xi_1+ \xi_2+\xi_3|n^{-1/2}\ll  \exp (-c\sqrt{\log n}). \nonumber
\end{align}
\subsection{Kac-Rice premises}
We have the following formula for the variance of $\mathcal{L}(f_n,s)$, see \cite[Lemma 3.1, Lemma 3.4, Lemma 3.5 and and page 16]{BMW}. 
\begin{lem}
	\label{Kac-Rice}
	Let $s>0$ and write $r=r_n(\cdot)$ as in \eqref{covariance}. Then, we have 
	\begin{align}
	\Var(\mathcal{L}(f_n,s))= \frac{n}{2}\int_{B(s)\times B(s)}\left(L_2(x-y)+ \epsilon(x-y)\right)dxdy + O\left( \int_{B(s)\times B(s)} r(x-y)^6dxdy\right) \nonumber
	\end{align}
	where 
	\begin{align}
	&8 L_2(x)= r^2 + \Tr(X)+ \frac{\Tr(Y^2)}{4} + \frac{3}{4}r^4 - \frac{\Tr(XY^2)}{8} - \frac{\Tr(X^2)}{16}+ \frac{\Tr(Y^4)}{128} \nonumber \\
	&+\frac{\Tr(Y^2)^2}{256} - \frac{\Tr(X)\Tr(Y^2)}{16}+ \frac{r^2 \Tr(X)}{2}+ \frac{r^2 \Tr(Y^2)}{8}  \nonumber \\
	&| \epsilon(x)|\ll r^6 + \Tr(X^3)+ \Tr(Y^6) \nonumber
	\end{align}
	and 
	\begin{align}
	&X=- \frac{2}{n(1-r^2)}(\nabla r)^t\nabla r &Y=-\frac{2}{n}\left(H + \frac{r}{1-r^2}(\nabla r)^t\nabla r\right)\nonumber
	\end{align}
	where $H$ is the Hessian of $r$, that is $H_{ij}= \frac{\partial^2}{\partial x_i \partial x_j} r$. 
\end{lem}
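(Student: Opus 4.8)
The plan is to obtain the formula from the Kac--Rice metatheorem for the second moment of the nodal length, restricted to the ball $B(s)$, and then subtract the square of the first moment. Since $f_n$ is a centred stationary Gaussian field, passing from $B(s)$ to the full torus changes only the domain of integration and not the local computation, so the analysis is essentially the one already carried out in \cite{KKW,BMW}. First I would check the hypotheses: $f_n$ is smooth, $r_n(0)=1$, $\nabla r_n(0)=0$, the gradient covariance $\mathbb{E}[\partial_i f_n\,\partial_j f_n]$ is a nonzero multiple of $\delta_{ij}$ of order $n$, and for a.e.\ $(x,y)$ with $x,y\in B(s)$ the pair $(f_n(x),f_n(y))$ is nondegenerate since $|r_n(x-y)|<1$ off a null set, while $(f_n(x),\nabla f_n(x))$ is nondegenerate. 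Kac--Rice then gives
\begin{align}
\mathbb{E}[\mathcal{L}(f_n,s)^2]=\int_{B(s)\times B(s)}K_2(x-y)\,dx\,dy,
\end{align}
where $K_2(z)$ is the Gaussian density of $(f_n(0),f_n(z))$ at $(0,0)$ times the conditional expectation $\mathbb{E}\bigl[\,|\nabla f_n(0)|\,|\nabla f_n(z)|\ \big|\ f_n(0)=f_n(z)=0\,\bigr]$, and the first moment $\mathbb{E}[\mathcal{L}(f_n,s)]$ is the standard constant times $\sqrt{n}\,\pi s^2$.

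Next I would perform the Gaussian conditioning. Writing the six-dimensional centred Gaussian vector $(f_n(0),f_n(z),\nabla f_n(0),\nabla f_n(z))$ and conditioning the last four coordinates on the vanishing of the first two produces a centred four-dimensional Gaussian whose covariance, once the scalar gradient variance is divided out, is an explicit rational expression in $r$, $\nabla r$ and the Hessian $H$ of $r$. Organising it in $2\times2$ block form, the diagonal blocks are $\mathrm{I}_2$ plus the matrix $Y=-\frac{2}{n}\bigl(H+\frac{r}{1-r^2}(\nabla r)^t\nabla r\bigr)$ — the correction term being precisely the Schur-complement contribution of the conditioning — and the off-diagonal block is built from $X=-\frac{2}{n(1-r^2)}(\nabla r)^t\nabla r$; this is the matrix data in the statement. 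One checks that the entries of $X$, $Y$ and $\nabla r$ are all $O(r)$ pointwise, hence uniformly small for $|z|$ bounded away from $0$ and small on average in $z$.

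The core of the argument is then the ``deterministic'' evaluation of $\mathbb{E}[\,|V_1|\,|V_2|\,]$ for a correlated Gaussian pair $(V_1,V_2)\in\mathbb{R}^2\times\mathbb{R}^2$ with the covariance just described. Using a Fourier-type integral representation of the Euclidean norm (as in \cite{KKW}) and expanding in the small entries of $X$ and $Y$, one collects all contributions of total degree at most four into $L_2$, bounds the degree-$\ge6$ tail by $r^6+\Tr(X^3)+\Tr(Y^6)$ (odd-degree terms vanishing by parity), incorporates the $\frac{1}{\sqrt{1-r^2}}$ density factor and the scalar gradient variance $\asymp n$, and subtracts $(\mathbb{E}[\mathcal{L}(f_n,s)])^2$; this converts the second moment into the variance and yields, after pulling out the overall $\frac{n}{2}$, exactly the stated expansion $8L_2=r^2+\Tr(X)+\frac14\Tr(Y^2)+\frac34r^4-\dots$, the leftover cross-terms between the expansion and the squared mean being $O\bigl(\int_{B(s)\times B(s)}r^6\bigr)$. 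The main obstacle is the bookkeeping in this last step: arranging the multivariate Taylor expansion so that the degree cutoff falls exactly where the statement places it and so that the remainder is controlled uniformly in $z$ by $r^6+\Tr(X^3)+\Tr(Y^6)$. Since this computation is carried out in full in \cite[Lemmas 3.1, 3.4 and 3.5]{BMW}, in practice I would quote it rather than reproduce it.
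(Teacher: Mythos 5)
Your proposal takes essentially the same route as the paper, which gives no independent proof of this lemma and simply imports it from \cite[Lemmas 3.1, 3.4, 3.5 and p.~16]{BMW}; the Kac--Rice/conditioning/Taylor-expansion outline you sketch is precisely the argument carried out there (and in \cite{KKW}). One small slip in your sketch, harmless since you defer the computation to \cite{BMW}: the roles of $X$ and $Y$ are swapped — $X$, which contains no Hessian, is the conditioning correction to the \emph{diagonal} blocks of the conditional gradient covariance, while $Y$, which contains $H$, is the normalized \emph{off-diagonal} block.
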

To evaluate the integrals in Lemma \ref{Kac-Rice}, we need the following lemma. 
\begin{lem}
	\label{moments}
	Let $A= \frac{29}{6} \log 2$, $\epsilon>0$, $n\in S$ and $s>n^{-1/2}(\log n)^{A+\epsilon/2}$. Suppose that  $\mathcal{Q}(l,n, n^{1/2}/(\log n)^{\frac{9}{2}\log 2+\epsilon})=\emptyset$ for $l=2,4,6$, and $N= (\log n)^{\frac{\log 2 }{2}\pm \epsilon/4}$,  then 
	\begin{align}
	& \int_{B(s)\times B(s)} r(x-y)^2 dxdy= \frac{(\pi s^2)^2}{N}\left(1+o\left( \frac{1}{N^2}\right)\right) \nonumber \\
	& \int_{B(s)\times B(s)} r(x-y)^4 dxdy= \frac{3(\pi s^2)^2}{N^2}\left(1+o\left( \frac{1}{N^2}\right)\right)\nonumber \\
	& \int_{B(s)\times B(s)} r(x-y)^6 dxdy= o \left(\frac{s^4}{N^2}\right) . \nonumber
	\end{align}	 
\end{lem}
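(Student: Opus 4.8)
The plan is to open up each double integral over the lattice points on $\sqrt nS^1$ and separate the diagonal contribution (the spectral correlations $\mathcal S(l,n)$) from the off-diagonal one (the spectral quasi-correlations). Writing $r_n(z)=N^{-1}\sum_{|\xi|^2=n}e(\langle\xi,z\rangle)$, expanding the $l$-th power and integrating term by term with $\int_{B(s)}e(\langle\eta,x\rangle)\,dx=\pi s^2$ when $\eta=0$ and $\int_{B(s)}e(\langle\eta,x\rangle)\,dx=s|\eta|^{-1}J_1(2\pi s|\eta|)$ otherwise, one gets
\[
\int_{B(s)\times B(s)}r_n(x-y)^l\,dx\,dy=\frac{(\pi s^2)^2\,\#\mathcal S(l,n)}{N^l}+\frac{s^2}{N^l}\sum_{0<|\eta|}\frac{J_1(2\pi s|\eta|)^2}{|\eta|^2},
\]
the sum running over $l$-tuples $(\xi_1,\dots,\xi_l)$, $|\xi_i|^2=n$, with $\eta:=\xi_1+\dots+\xi_l\neq0$. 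For the diagonal term I would substitute the known counts $\#\mathcal S(2,n)=N$, $\#\mathcal S(4,n)=3N^2+O(N)$ (Zygmund, \cite{Z}) and $\#\mathcal S(6,n)\ll N^{7/2}$ (Bombieri--Bourgain, \eqref{2}); together with $N\asymp(\log n)^{(\log 2)/2\pm\epsilon/4}$ this produces the stated leading terms $(\pi s^2)^2/N$ and $3(\pi s^2)^2/N^2$, and shows that for $l=6$ the diagonal term is already $\ll(\pi s^2)^2N^{-5/2}=o(s^4/N^2)$.

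The substance of the proof is the off-diagonal sum. Since $\mathcal Q(l,n,K)=\emptyset$ for $K=n^{1/2}/(\log n)^{(9/2)\log 2+\epsilon}$, every nonzero $\eta$ has $|\eta|>K$, and because $s>n^{-1/2}(\log n)^{A+\epsilon/2}$ with $A>\tfrac92\log 2$ we get $2\pi s|\eta|\gg(\log n)^{c}$ for a fixed $c=c(\epsilon)>0$ (provided $\epsilon$ is small), in particular $2\pi s|\eta|\to\infty$. Hence we sit in the large-argument regime $J_1(T)^2\ll T^{-1}$, so $\frac{J_1(2\pi s|\eta|)^2}{|\eta|^2}\ll\frac1{s|\eta|^3}$ and the off-diagonal contribution is $\ll\frac s{N^l}\sum_{0<|\eta|}|\eta|^{-3}$. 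For $l=2,4$ I would bound $\sum_{0<|\eta|}|\eta|^{-3}$ trivially by $N^l K_l^{-3}$, now using the sharper thresholds $K_l=n^{1/2}/(\log n)^{c(l)}$ with $c(2)=\log 2$, $c(4)=3\log 2$ that Theorem \ref{theorem 1} provides; inserting the sizes of $N$ and $s$ and the value of $A$ then makes this error $o(s^4/N^3)$, resp. $o(s^4/N^4)$, so the diagonal supplies the main term. For $l=6$ the trivial bound is a hair too weak, so I would instead split $\sum_{0<|\eta|}|\eta|^{-3}$ into dyadic shells $2^jK_6<|\eta|\le2^{j+1}K_6$ and count the $6$-tuples with $|\xi_1+\dots+\xi_6|\le t$ by freezing five of the summands: the sixth is then confined to an arc of $\sqrt nS^1$ of angular length $\ll t/\sqrt n$, which by the quantitative equidistribution of Erd\"{o}s--Hall (Theorem \ref{Erdos-Hall}) carries $\ll N\bigl(t/\sqrt n+(\log n)^{-\kappa+\epsilon}\bigr)$ lattice points, $\kappa=\tfrac12\log\tfrac\pi2$. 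Summing the shells gains an extra factor $(\log n)^{-\kappa+O(\epsilon)}$, just enough to push the off-diagonal down to $o(s^4/N^2)$; assembling the three cases gives the lemma.

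The only genuinely delicate point is this $l=6$ estimate: the constant $A=\tfrac{29}6\log 2$ is calibrated precisely so that the off-diagonal error --- governed by the quasi-correlation exponent $c(6)=\tfrac92\log 2$ coming out of Theorem \ref{theorem 1}, by $N\asymp(\log n)^{(\log 2)/2}$, and by the lower bound on $s$ --- comes out just barely $o(s^4/N^2)$. The trivial lattice-point count loses a factor $(\log n)^{O(\epsilon)}$, so one must either strip the $\epsilon$ off the quasi-correlation exponent (legitimate, by intersecting the density-one sets of Theorem \ref{theorem 1} over $\epsilon\to0$) or else feed in the equidistribution estimate above; modulo this bookkeeping of logarithmic powers, everything else is a routine expansion together with Bessel-function asymptotics.
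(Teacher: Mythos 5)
Your proposal follows the same skeleton as the paper's proof: expand $\int_{B(s)\times B(s)}r^l$ into the diagonal part, evaluated via $\#\mathcal S(2,n)=N$, Zygmund's $\#\mathcal S(4,n)=3N^2+O(N)$ and the Bombieri--Bourgain bound $\#\mathcal S(6,n)\ll N^{7/2}$, plus an off-diagonal Bessel sum which is killed by the decay $J_1(T)/T\ll T^{-3/2}$ once the quasi-correlation hypothesis forces $s|\xi_1+\dots+\xi_l|$ to be a positive power of $\log n$. Where you diverge is in the treatment of the off-diagonal term: the paper simply applies the trivial bound (at most $N^l$ tuples, each contributing $\ll (s|\eta|)^{-3}$) with the single threshold $n^{1/2}/(\log n)^{\frac92\log 2+\epsilon}$ for all three values of $l$, and concludes $o(N^{-2})$ with rather loose $\epsilon$-bookkeeping; it does not invoke the sharper per-$l$ exponents $c(2),c(4)$ of Theorem \ref{theorem 1}, nor any dyadic decomposition or equidistribution input. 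Your refinements are mathematically sound (I checked that the arc-length count $\ll t/\sqrt n$ and the shell summation do close the $(\log n)^{O(\epsilon)}$ gap you identify, and your $l=2,4$ computation with $c(2)=\log 2$, $c(4)=3\log 2$ does give the stated stronger error terms), but note two caveats. First, they prove a variant of the lemma with strengthened hypotheses: neither the emptiness of $\mathcal Q(2,n,\cdot)$ and $\mathcal Q(4,n,\cdot)$ at the larger thresholds nor the Erd\H{o}s--Hall discrepancy bound of Theorem \ref{Erdos-Hall} is among the assumptions of the lemma as stated (they are of course available on the density-one sequence to which the lemma is ultimately applied, which is why the paper can afford to be cruder here). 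Second, your alternative fix of ``intersecting the density-one sets of Theorem \ref{theorem 1} over $\epsilon\to0$'' is not automatic --- a countable intersection of density-one sets need not have density one --- so if you go that way you need a diagonal-type selection, or, more simply, you should just re-thread the $\epsilon$'s in the hypotheses (apply Theorem \ref{theorem 1} with a smaller $\epsilon$ than the one appearing in the lower bound for $s$), which is the bookkeeping the paper's own argument implicitly relies on.
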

\begin{proof}
	Let $l$ be either $2$,$4$ or $6$. A direct computation gives
	\begin{align}
	\int_{B(s)\times B(s)} r(x-y)^l dxdy&= \frac{1}{N^l}\int_{B(s)\times B(s)}  \sum_{\xi_1,..,\xi_l} e(\langle \xi_1+...+\xi_l, x-y\rangle) dxdy \nonumber \\
	&= \frac{(\pi s^2)^2}{N^l}\sum_{\xi_1+..+\xi_l=0}1 + O\left( \frac{1}{N^l} \sum_{|\xi_1+..+\xi_l|>0} \left|\int_{B(s)} e(\langle \xi_1+...+\xi_l, x\rangle)dx \right|^2\right) \label{23}
	\end{align}
	The first term on the right hand side of \eqref{23} is equal to $(\pi s^2)^2/N$ if $l=2$, $3(\pi s^2)^2/N^2$ if $l=4$ and if $l=6$ we use \eqref{2} to see that it is bounded by $O(s^4/N^{5/2})$. Thus, we are left with bounding the second term on the right of \eqref{23}. Carrying out the integral gives
	\begin{align}
	\frac{1}{N^l}\sum_{|\xi_1+..+\xi_l|>0} \left|\int_{B(s)} e(\langle \xi_1+...+\xi_l, x\rangle)dx \right|^2 = 	\frac{(\pi s^2)^2}{N^l}\sum_{|\xi_1+..+\xi_l|>0} \left| \frac{J_1(s|\xi_1+...+\xi_l|)}{s|\xi_1+...+\xi_l|}\right|^2  \label{24} 
	\end{align}
	By assumption $\mathcal{Q}(l,n, n^{1/2}/(\log n)^{9\log 2/2+\epsilon})=\emptyset$, thus  $s|\xi_1+...+\xi_l|\geq (\log n)^{\frac{1}{3}\log2 -\epsilon/2}$, bearing in mind that $J_1(T)/T\ll T^{-3/2}$ and $N= (\log n)^{\frac{\log 2 }{2}\pm \epsilon/4}$, we obtain 
	\begin{align}
	\text{RHS}\eqref{24}\ll \frac{1}{(\log n)^{\log2+3\epsilon/2}}=o(N^{-2}) \nonumber
	\end{align}
	as required. 
\end{proof}
Using Lemma \ref{Kac-Rice} and following similar calculations to Lemma \cite[Lemma 3.4]{BMW}, we obtain the following lemma. 
\begin{lem}
	\label{calculations}
	Under the assumptions of Lemma \ref{moments}, we have 
	\begin{align*}
	&	\int_{B(s)\times B(s)} \Tr X(x-y)dxdy= (\pi s^2)^2 \left( \frac{-2}{N}- \frac{2}{N^2} + o\left( \frac{1}{N^2}\right)\right) \nonumber \\
	&\int_{B(s)\times B(s)} \Tr Y(x-y)^2dxdy= (\pi s^2)^2 \left( \frac{4}{N}- \frac{4}{N^2} + o\left( \frac{1}{N^2}\right)\right) \nonumber \\
	&\int_{B(s)\times B(s)} \Tr X(x-y)Y(x-y)^2dxdy= (\pi s^2)^2 \left( - \frac{4}{N^2} +o\left( \frac{1}{N^2}\right)\right) \nonumber \\
	&\int_{B(s)\times B(s)} \Tr X(x-y)^2dxdy= (\pi s^2)^2 \left(  \frac{8}{N^2} +o\left( \frac{1}{N^2}\right)\right) \nonumber \\
	\end{align*}
	\begin{align*}
	&\int_{B(s)\times B(s)} \Tr Y(x-y)^4dxdy= (\pi s^2)^2 \left(  \frac{2(11+ \hat{\mu}(4)^2)}{N^2} + o\left( \frac{1}{N^2}\right)\right) \nonumber \\
	&\int_{B(s)\times B(s)} (\Tr Y(x-y)^2)^2dxdy= (\pi s^2)^2 \left(  \frac{4(7+ \hat{\mu}(4)^2)}{N^2} + o\left( \frac{1}{N^2}\right)\right) \nonumber \\
	&\int_{B(s)\times B(s)} \Tr X(x-y) \Tr Y(x-y)^2dxdy= (\pi s^2)^2 \left( - \frac{8}{N^2} + o\left( \frac{1}{N^2}\right)\right) \nonumber \\
	&\int_{B(s)\times B(s)} r(x-y)^2 \Tr X(x-y)dxdy= (\pi s^2)^2 \left( - \frac{2}{N^2} + o\left( \frac{1}{N^2}\right)\right) \nonumber \\
	&\int_{B(s)\times B(s)} r(x-y)^2 \Tr Y(x-y)^2dxdy= (\pi s^2)^2 \left(  \frac{8}{N^2} +o\left( \frac{1}{N^2}\right)\right) \nonumber \\
	&\int_{B(s)\times B(s)} \Tr Y(x-y)^3dxd= o\left( \frac{s^4}{N^2}\right) \nonumber \\
	&\int_{B(s)\times B(s)} \Tr Y(x-y)^6dxd= \ o\left( \frac{s^4}{N^2}\right) \nonumber .
	\end{align*}
\end{lem}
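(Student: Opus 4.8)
The plan is to carry out the variance computation of \cite[Lemma~3.4]{BMW} (itself modelled on \cite{KKW}), recording at each step where Lemma~\ref{moments} and its standing hypotheses are used. Write $r=r_n(x-y)$ as in \eqref{covariance}, so that $r$, its gradient $\nabla r$ and its Hessian $H$ are, up to the rescaling built into Lemma~\ref{Kac-Rice}, the Fourier sums $\frac1N\sum_{|\xi|^{2}=n}e(\langle\xi,\cdot\rangle)$ with weights $1$, $\xi$ and $\xi\xi^{t}$; put $\hat\xi=\xi/\sqrt n$, so $|\hat\xi|=1$. Since $X=-\frac{2}{n(1-r^{2})}(\nabla r)^{t}\nabla r$ is rank one and $Y$ is a symmetric $2\times2$ matrix, Cayley--Hamilton reduces each of the eleven integrands to a polynomial in $r$, $\partial_i r$, $\partial_i\partial_j r$ divided by a power of $1-r^{2}$. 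I would then expand every factor $(1-r^{2})^{-1}$ as $1+r^{2}+r^{4}+\frac{r^{6}}{1-r^{2}}$, keeping each remainder $\frac{r^{6}}{1-r^{2}}$ \emph{together with} the derivative numerator it multiplies: the resulting combinations, e.g. $\frac{|\nabla r|^{2}}{n(1-r^{2})}$ or $\frac{\nabla r\,H\,(\nabla r)^{t}}{n^{2}(1-r^{2})}$, encode the conditional second--order data of a unit--variance Gaussian field and hence are $\ll1$ uniformly; each remainder contribution is therefore $\ll|r(x-y)|^{6}$ pointwise, and integrates to $o(s^{4}/N^{2})$ by the third estimate in Lemma~\ref{moments}.

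Each retained monomial has the form $\frac{c}{N^{m}}\sum_{\xi_1,\dots,\xi_m}Q(\hat\xi_1,\dots,\hat\xi_m)\,e(\langle\xi_1+\dots+\xi_m,x-y\rangle)$ with $c$ absolute, $m$ bounded, and $Q$ a polynomial of degree $\le6$ with bounded coefficients. Integrating over $B(s)\times B(s)$ I would use $\int_{B(s)\times B(s)}e(\langle\eta,x-y\rangle)\,dx\,dy=|\widehat{1_{B(s)}}(\eta)|^{2}$, equal to $(\pi s^{2})^{2}$ when $\eta=0$ and $\ll s^{4}(s|\eta|)^{-3}$ otherwise (via $J_1(T)/T\ll T^{-3/2}$). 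The diagonal part $\xi_1+\dots+\xi_m=0$ yields the main term: here one feeds in $\#\mathcal S(2,n)=N$, $\#\mathcal S(4,n)=3N^{2}+O(N)$, $\#\mathcal S(6,n)\ll N^{7/2}$ and $\mathcal S(2k+1,n)=\emptyset$ from Section~\ref{Spectral correlations}, and evaluates the weighted sums $\sum Q$ over the essentially pairing--type solutions using $\sum_{|\xi|^{2}=n}\hat\xi=0$, $\frac{1}{N}\sum_{|\xi|^{2}=n}\hat\xi_i\hat\xi_j=\frac{1}{2}\delta_{ij}+o(1)$, and the fourth mixed moments $\frac{1}{N}\sum_{|\xi|^{2}=n}\hat\xi_{i_1}\hat\xi_{i_2}\hat\xi_{i_3}\hat\xi_{i_4}$. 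By the symmetry $\xi\mapsto i\xi$ of $\mathbb{Z}[i]$ one has $\hat\mu_n(1)=\hat\mu_n(2)=\hat\mu_n(3)=0$, and by Theorem~\ref{Erdos-Hall} the remaining moments agree with their values against the uniform measure up to $o(1)$; the single arithmetic survivor is $\hat\mu_n(4)$, entering (from the fully nested pairing $\langle\hat\xi_1,\hat\xi_2\rangle^{4}$, i.e. the average of $\cos^{4}$) as the $\hat\mu(4)^{2}$ in the $\Tr Y^{4}$ and $(\Tr Y^{2})^{2}$ lines. For the odd--order trace $\Tr Y^{3}$ (and the odd cross--terms inside $\Tr Y^{6}$) the diagonal set is empty, so those integrals have no main term at all. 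Collecting and multiplying by $(\pi s^{2})^{2}$ produces the eleven stated asymptotics.

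For the off--diagonal part $\eta=\xi_1+\dots+\xi_m\neq0$ I argue as in the proof of Lemma~\ref{moments}: for a monomial on $m\le6$ lattice points the hypothesis $\mathcal Q(l,n,n^{1/2}/(\log n)^{\frac{9}{2}\log2+\epsilon})=\emptyset$ for $l=2,4,6$ forces $|\eta|>n^{1/2}/(\log n)^{\frac{9}{2}\log2+\epsilon}$, hence $s|\eta|\ge(\log n)^{\frac{1}{3}\log2-\epsilon/2}$, and with $J_1(T)/T\ll T^{-3/2}$ and $N\asymp(\log n)^{\frac{\log2}{2}\pm\epsilon/4}$ the contribution is $o((\pi s^{2})^{2}/N^{2})$; the longer monomials produced by the geometric expansions carry extra powers of $r$ and have too short a diagonal to contribute a main term, and one disposes of their off--diagonal part either pointwise against $O(\int_{B(s)\times B(s)}r(x-y)^{6}\,dx\,dy)=o(s^{4}/N^{2})$ or, when that is not enough, by splitting $\eta$ into blocks and invoking Theorem~\ref{theorem 1} for the relevant larger values of $l$, exactly as in \cite[Lemma~3.4]{BMW}.

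Conceptually no single step is hard; the real obstacle is the bookkeeping. One must expand all eleven integrands to absolute precision $o(1/N^{2})$, keep track of the combinatorics of the pairing (diagonal) solutions --- in particular isolate the genuine $\hat\mu_n(4)$--dependence and verify that the numerous spurious $O(1/N)$ and $O(1/N^{2})$ contributions cancel into the stated closed forms --- and check that every error term is truly $o(s^{4}/N^{2})$ using only the emptiness of low--length spectral quasi--correlations packaged in Lemma~\ref{moments}. Since this is precisely a variance computation of the type carried out in \cite{KKW} and \cite{BMW}, I would present the scheme above and refer to those papers for the remaining arithmetic.
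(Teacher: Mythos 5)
Your proposal matches the paper's treatment, which offers no independent proof of this lemma: it simply invokes Lemma \ref{Kac-Rice} and defers to the computations of \cite[Lemma 3.4]{BMW}, i.e.\ exactly the expansion of the traces in $r$ and its derivatives, the diagonal/off-diagonal split with the correlation counts $\#\mathcal{S}(2,n)=N$, $\#\mathcal{S}(4,n)=3N^2+O(N)$, $\#\mathcal{S}(6,n)\ll N^{7/2}$, and the error control supplied by Lemma \ref{moments} that you outline. One small correction: the appeal to Theorem \ref{Erdos-Hall} is neither available under the hypotheses of Lemma \ref{moments} (and fails along the sparse sequences of Proposition \ref{limit point} with $\hat{\mu}_n(4)\to w\neq 0$, to which this lemma is applied) nor needed, since the low-order moments of $\mu_n$ occurring here are exact trigonometric expressions in $\hat{\mu}_n(4)$ by the symmetries $\xi\mapsto -\xi$, $\xi\mapsto i\xi$, $\xi\mapsto\bar{\xi}$, which is precisely how the $\hat{\mu}_n(4)$-dependence survives in the stated asymptotics.
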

\subsection{Concluding the proof of Theorem \ref{thm 2}}
We are finally ready to prove Theorem \ref{thm 2}.
\begin{proof}[Proof of Theorem \ref{thm 2}]
	By Proposition \ref{limit point} and Theorem \ref{theorem 1}, there exists a density one   subsequence of integers $n\in S$ such that both $(1)$ and Lemma \ref{calculations} hold. For such sequence we can evaluate the variance using Lemma \ref{Kac-Rice} (and following identical calculation to the proof of \cite[Theorem 1.1]{BMW}), so $(2)$ follows. To prove $(3)$ we again resort to calculations in \cite{BMW}: for all $s>0$ we have 
	\begin{align}
	\Cov(\mathcal{L}(f_n,s), \mathcal{L}(f_n))= (\pi \cdot s^2)^2 \Var(\mathcal{L}(f_n)). \label{25}
	\end{align}
	Hence, combining \eqref{25}, part $(2)$ and \eqref{variance f_n} we obtain  $(3)$. 
\end{proof}
\section{Proof of Theorem \ref{thm 3}}
To prove Theorem \ref{thm 3}, we use the stability of the nodal set under small perturbations, as in \cite{NS}. To prove Theorem \ref{thm 3} will need a series of results. 
\subsection{Stability of the nodal set}
The following deterministic lemma, inspired by \cite[Lemma 4.7]{BW}, will be our main tool in studying small perturbations of the nodal set of $f_n$. 
\begin{lem}
	\label{stability}
	Let $h, \vartheta: B(1)\rightarrow \mathbb{R}$ be two  smooth functions and assume the following:
	\begin{enumerate}
		\item For some $\beta>0$ we have 
		\begin{align}
		\underset{y\in B(1)}{\min}\max\{|h|,|\nabla h|\}> \beta. \nonumber
		\end{align}
		\item For some $M>0$ we have 
		\begin{align}
		||h||_{C^2},||h+ \vartheta||_{C^2}<M. \nonumber
		\end{align}
		\item For some $\tau>0$ we have 
		\begin{align}
		||\vartheta||_{C^2}<\tau. \nonumber
		\end{align}
	\end{enumerate}
	Then, provided that $\tau\leq \beta^2 \cdot(16M)^{-1}$, we have 
	\begin{align}
	\mathcal{L}(h+\vartheta) =  \mathcal{L}(h) \left(1+ O\left(\tau \frac{M^3}{\beta^4 }\right)\right). \nonumber
	\end{align}
\end{lem}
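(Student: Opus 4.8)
The plan is to show that near the nodal set of $h$, the perturbed function $h+\vartheta$ has a nodal set which is a small graph-perturbation of $\mathcal{Z}(h)$, so that its length is controlled, while away from $\mathcal{Z}(h)$ the perturbed function cannot vanish at all. First I would use hypothesis (1) to cover $B(1)$ by two regions: the set $U$ where $|h|>\beta$ (so $h$ is bounded away from zero) and the set $V$ where $|\nabla h|>\beta$ (so $h$ has a non-degenerate gradient). On $U$, since $|\vartheta|\le \|\vartheta\|_{C^2}<\tau$ and we are assuming $\tau$ is small compared to $\beta$ (which follows from $\tau\le \beta^2/(16M)$ once one notes $\beta\le M$), we get $|h+\vartheta|\ge \beta-\tau>0$, so $\mathcal{Z}(h+\vartheta)\cap U=\emptyset$; the same argument gives $\mathcal{Z}(h)\cap U=\emptyset$. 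Hence all the nodal length, for both functions, is concentrated in $V$, the region where $|\nabla h|>\beta$.

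On $V$ I would invoke the implicit function theorem quantitatively. Where $|\nabla h|>\beta$ and $\|h\|_{C^2}<M$, the nodal set $\mathcal{Z}(h)$ is locally a $C^1$ graph over the direction orthogonal to $\nabla h$, with slopes and curvature controlled by $M/\beta$ and $M/\beta$-type quantities; the key point is that in a tube of width comparable to $\beta/M$ around each nodal point the gradient stays of size $\gtrsim \beta$ (using $\|h\|_{C^2}<M$ to control the variation of $\nabla h$). Since $\|\vartheta\|_{C^1}<\tau\le \beta^2/(16M)\le \beta/16$, the gradient of $h+\vartheta$ is also of size $\gtrsim \beta$ on the same tube, and $|h+\vartheta - h|<\tau$ forces $\mathcal{Z}(h+\vartheta)$ to lie within normal distance $O(\tau/\beta)$ of $\mathcal{Z}(h)$, and conversely. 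Parametrising both nodal curves as graphs $x_\perp \mapsto u(x_\parallel)$ and $x_\perp\mapsto \tilde u(x_\parallel)$ over the same base interval, the length element is $\sqrt{1+|u'|^2}$, and one estimates $|u'-\tilde u'|$ in terms of the $C^1$-smallness of $\vartheta$ relative to $\beta$ and the $C^2$-bound $M$: differentiating the defining relations $h(x_\parallel,u)=0$, $(h+\vartheta)(x_\parallel,\tilde u)=0$ and subtracting gives $|u-\tilde u|\ll \tau/\beta$ and then $|u'-\tilde u'|\ll \tau M/\beta^2 + \tau M^2/\beta^3$, which after collecting powers is $\ll \tau M^2/\beta^3$. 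Comparing the two length integrals, $\bigl|\sqrt{1+|u'|^2}-\sqrt{1+|\tilde u'|^2}\bigr|\le |u'-\tilde u'|$, and bounding the base length and the derivative $|u'|\ll M/\beta$, the total discrepancy is $\ll \mathcal{L}(h)\cdot \tau M^3/\beta^4$, which is the claimed error. The condition $\tau\le \beta^2/(16M)$ is exactly what is needed to keep all the perturbed quantities (gradient lower bound, graph parametrisation) valid with the same constants.

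The main obstacle I anticipate is the bookkeeping of the quantitative implicit function theorem: one must choose the right scale (a tube of radius $c\beta/M$ around $\mathcal{Z}(h)$) on which the graph representation is simultaneously valid for $h$ and $h+\vartheta$ with uniform constants, cover $V$ by finitely many such tubes without double-counting length, and make sure the number of tubes and the local length contributions are controlled purely by $\mathcal{L}(h)$ and dimensionless ratios of $M,\beta$. A secondary subtlety is that $\mathcal{Z}(h)$ could have components entirely inside $V$ that are small closed loops; these are handled the same way since on each such loop $|\nabla h|>\beta$ forces it to be a single smooth curve, and the perturbed loop is a nearby smooth curve of comparable length. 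Everything else — the sign argument on $U$, the chain-rule estimates for $u'-\tilde u'$, and the final comparison of arc-length integrals — is routine once the scale is fixed, so I would spend the bulk of the write-up making the tube decomposition precise.
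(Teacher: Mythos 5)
Your proposal is correct and follows essentially the same route as the paper: both arguments are quantitative stability estimates using the gradient lower bound $|\nabla h|>\beta$ near the zero set, a pointwise correspondence between $\mathcal{Z}(h)$ and $\mathcal{Z}(h+\vartheta)$ at distance $O(\tau/\beta)$ valid under $\tau\leq\beta^2(16M)^{-1}$, and a comparison of length elements giving the relative error $O(\tau M^3/\beta^4)$. The only cosmetic difference is that the paper displaces $\gamma_h$ along its normal via the map $z\mapsto z+r(z)N(z)$ and bounds $\frac{d}{dt}r(C(t))$, whereas you write both curves as graphs over a common base and bound $|u'-\tilde u'|$; these are the same computation in different coordinates.
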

\begin{proof}
	Let $\gamma_h$ be a connected component of $h^{-1}(0)$, fix some $z_0\in \gamma_h$, and let $N(z_0)=\nabla h/ |\nabla h| $ be a unit normal vector of $\gamma_h$ at $z_0$. By assumption (1), $|\nabla h(z_0)|>\beta$, thus, bearing in mind that all second derivatives of $h$ are bounded, we can find some $0<r_0=r_0(\beta,M)\leq \beta/4 M$ such that
	\begin{align}
	N(z_0) \cdot \nabla h(z)>\beta/2 \label{31}
	\end{align}
	for all $z$ in a $r_0$-neighbourhood of $z_0$. Now, consider the function
	\begin{align}
	\zeta(r)=h(z_0+rN(z_0)) + \vartheta(z_0+rN(z_0)).\nonumber
	\end{align}
	Bearing in mind assumption $(3)$ and using \eqref{31}, we have 
	\begin{align}
	\zeta'(r)>\beta/2-\tau>\beta/4  \label{derivative growing}
	\end{align}
	for $\tau< \beta/4$ and uniformly for all $|r|< r_0$. Now, suppose that $0<\zeta(0)= \vartheta(z_0)\leq \tau$ and that $4\tau/\beta\leq r_0$ then, bearing in mind \eqref{derivative growing}, the mean value theorem implies that there exist a unique (negative) $r=r(z_0)$ with $|r|\leq 4\tau/\beta \leq r_0$, that is $\tau\leq \beta^2 \cdot (16 M)^{-1}$,  such that $\zeta(r(z_0))=0$. Arguing similarly in the case $\zeta(0)<0$, and taking $r=0$ if $\zeta(0)=0$, we find that the map 
	\begin{align}
	z\rightarrow z+r(z)N(z) \label{32}
	\end{align}
	is an injection of $\gamma_h$ into $\gamma_{h+\vartheta}$ (where $\gamma_{h+\vartheta}$ is a connected component of $(h+\vartheta)^{-1}(0)$), provided $\tau\leq  \beta^2 \cdot (16M)^{-1}$.
	
	\begin{claim}
		\label{main claim}
		Via the implicit function theorem, parametrize $\gamma_h$ in some neighbourhood $U$ around $z_0$ as $C(t)=(t,q(t))$ for some smooth function $q:U\rightarrow \mathbb{R}$, then 
		\begin{align}
		\frac{d}{dt}r(C(t))= \nabla r C' \ll \tau M^3/\beta^4 \nonumber
		\end{align}
	\end{claim} 
	\begin{proof}  By (\ref{32}),  we have the following system:
		\begin{align}	
		&(h+\vartheta)(C(t)+ r(C(t))N(C(t))) =0 &h(C(t))=0	\nonumber
		\end{align}
		Taking the derivative with respect to $t$, we obtain 
		\begin{align}
		&\nabla (h+\vartheta)(C+ r N) \cdot [C'+ ( \nabla r  C')N+ r(N' C')]=0 \label{40} \\ 
		&\nabla h(C)C'=0 \nonumber  
		\end{align}
		By the implicit function theorem $|C'|\leq M/\beta$, thus, bearing in mind assumption (2) and the definition of $N$, $|N'C'|\leq M^2/\beta^2$. Moreover, by construction $r\leq 4\tau/\beta$, $|N|\leq 1$, and by assumption (2) $|\nabla (h+ \vartheta)|\leq M$, therefore we can re-write \eqref{40} as
		\begin{align}
		&\nabla (h+\vartheta)(C+ r N) \cdot  (\nabla r C')N= - \nabla(h+\vartheta)(C+ r N) \cdot C' + O\left( \tau \frac{M^3}{\beta^3}\right) \label{30}  \\
		&\nabla h(C)C'=0. \nonumber
		\end{align}
		Note that, by assumptions $(2)$ and $(3)$, for $r<1$ say, we have
		\begin{align}
		\nabla( h + \vartheta)(C+ rN) = \nabla h(C) + O\left(\frac{M}{\beta} \cdot \tau\right). \label{C2 bound}  
		\end{align}
		Thus, using the expansion \eqref{C2 bound} on the right hand side of \eqref{30} and subtracting the second equation from the first, we have 
		\begin{align}
		\nabla (h+\vartheta)(C+ r N) \cdot  (\nabla r C') \ll \tau  \frac{M^3}{\beta^3}. \label{41}
		\end{align}
		As $\nabla (h+\vartheta)\geq \beta/2$, the claim follows from \eqref{41}. 
	\end{proof} Finally, fix some $z_0\in \gamma_h$ and let $U$ be as in Claim \ref{main claim}, moreover let $V$ be the image of $U$ under the map  \eqref{32} so that $V$ is parametrised by $C(t)+ r(C(t))\cdot N(C(t))=: \tilde C(t)$. Now, Claim \ref{main claim} implies that $|C'(t)|= |\tilde{C}'(t)| + O \left(\tau M^3 \beta^{-4}\right)$, thus  
	\begin{align}
	\mathcal{L}(h\lvert_U)&=	\int_U |C'(t)|dt = \int_V |\tilde{C}'(t)|dt +  O\left(\tau \frac{M^3}{\beta^4 }\right)\nonumber \\
	&=\mathcal{L}((h+ \vartheta)\lvert_V) +O\left(\tau \frac{M^3}{\beta^4 }\right). \label{1/2}
	\end{align}
	Summing \eqref{1/2} over the zero set of $h$ we obtain the required result. 
\end{proof}
\subsection{Quantifying $M$}
By \cite[Lemma 3.12]{MV} and \cite[Corollary 2.2]{BM}, we have the following result:
\begin{lem}
	\label{lemma2}
	Let $f_n$ be as in \eqref{function}, $R>1$ $k\geq 0$, and let $F_n(y)= f_n(Ry/\sqrt{n})$ for $y\in B(1)$. Then we have 
	\begin{align}
		\mathbb{P}\left( ||F_n||_{C^k(B(1))}\gg R^{k+1}\log R\right)\leq e^{-C(\log R)^2}  \nonumber
	\end{align}
	for some $C>0$.
\end{lem}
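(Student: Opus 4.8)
The plan is to treat $F_n$ as a band-limited Gaussian field and apply concentration of measure; there is a great deal of slack, since $R^{k+1}\log R$ lies a whole power of $R$ above the typical size of $\|F_n\|_{C^k(B(1))}$. First I would rescale \eqref{function}: setting $\hat\xi=\xi/\sqrt n$, which is a unit vector because $|\xi|^2=n$, one has $F_n(y)=N^{-1/2}\sum_{|\xi|^2=n}a_\xi\, e(R\langle\hat\xi,y\rangle)$, so that $F_n$ is a centred, real-valued Gaussian field on $B(1)$ all of whose frequencies have modulus $R$. For every multi-index $\alpha$ with $|\alpha|\le k$, the field $\partial^\alpha F_n$ is obtained from $F_n$ by multiplying the $\xi$-th coefficient by $(2\pi i R\hat\xi)^\alpha$, a complex number of modulus at most $(2\pi R)^{|\alpha|}$; hence $\partial^\alpha F_n$ is again centred Gaussian and
\begin{align}
\sigma^2:=\sup_{y\in B(1)}\Var\big(\partial^\alpha F_n(y)\big)\le (2\pi R)^{2|\alpha|}\le (2\pi R)^{2k}.\nonumber
\end{align}

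The core step is to control $\mathbb{E}\big[\sup_{B(1)}|\partial^\alpha F_n|\big]$. I would exploit that $\partial^\alpha F_n$ is band-limited at frequency $R$: using $|e(R\langle\hat\xi,y\rangle)-e(R\langle\hat\xi,y'\rangle)|\le\min\{2,2\pi R|y-y'|\}$ one gets $\mathbb{E}\big[|\partial^\alpha F_n(y)-\partial^\alpha F_n(y')|^2\big]\ll_k R^{2k}\min\{1,R^2|y-y'|^2\}$, so the $\varepsilon$-covering number of $B(1)$ for the canonical metric of $\partial^\alpha F_n$ is $\ll_k (R^{k+1}/\varepsilon)^2$, and Dudley's entropy bound gives $\mathbb{E}\big[\sup_{B(1)}|\partial^\alpha F_n|\big]\ll_k R^k\sqrt{\log R}$. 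This is precisely the content of the quoted \cite[Lemma 3.12]{MV} and \cite[Corollary 2.2]{BM}, which one may alternatively just cite; this entropy estimate is the one place where a little care is needed, although, given the slack, even a crude bound of the shape $\ll_k R^{k+1}$ would be enough.

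Finally I would apply the Borell--TIS inequality to the a.s.\ bounded centred Gaussian process $\{\pm\partial^\alpha F_n(y):y\in B(1)\}$, whose supremum is $\sup_{B(1)}|\partial^\alpha F_n|$: for all $u>0$,
\begin{align}
\mathbb{P}\Big(\sup_{B(1)}|\partial^\alpha F_n| > \mathbb{E}\big[\sup_{B(1)}|\partial^\alpha F_n|\big]+u\Big)\le \exp\Big(-\tfrac{u^2}{2\sigma^2}\Big)\le \exp\Big(-\tfrac{u^2}{2(2\pi R)^{2k}}\Big).\nonumber
\end{align}
For $R$ large the expectation is at most $\tfrac12 R^{k+1}\log R$, so choosing $u\asymp R^{k+1}\log R$ makes the right-hand side $\le \exp(-c_k R^2(\log R)^2)\le \exp(-C(\log R)^2)$, using $R>1$. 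Since $\|F_n\|_{C^k(B(1))}\le C_k\sum_{|\alpha|\le k}\sup_{B(1)}|\partial^\alpha F_n|$ is a sum of $O_k(1)$ such terms, a union bound over $\alpha$ finishes the proof (for $R$ in a bounded range the conclusion is immediate after enlarging the implied constant, and in any case only the regime $R\to\infty$ is needed in the applications). The only genuine obstacle is the entropy bound of the second paragraph; the concentration step and the bookkeeping are entirely routine.
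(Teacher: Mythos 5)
Your argument is correct, and it is worth noting that the paper itself gives no proof of this lemma: it is simply quoted from \cite[Lemma 3.12]{MV} and \cite[Corollary 2.2]{BM}. Your Dudley-plus-Borell--TIS argument is a sound self-contained substitute, and it follows the same standard route those references take: the variance bound $\sigma^2\ll_k R^{2k}$ is right (the constraint $\overline{a_\xi}=a_{-\xi}$ only affects absolute constants, since $\mathbb{E}[a_\xi^2]=0$ kills the cross terms when pairing $\xi$ with $-\xi$), the canonical-metric estimate $\mathbb{E}|\partial^\alpha F_n(y)-\partial^\alpha F_n(y')|^2\ll_k R^{2k}\min\{1,R^2|y-y'|^2\}$ gives entropy numbers $\ll_k(R^{k+1}/\varepsilon)^2$ and hence $\mathbb{E}\sup_{B(1)}|\partial^\alpha F_n|\ll_k R^k\sqrt{\log R}$ (after adding $\mathbb{E}|\partial^\alpha F_n(y_0)|\ll R^k$ at a base point, a step you should make explicit), and with $u\asymp R^{k+1}\log R$ the concentration inequality yields $\exp(-c_kR^2(\log R)^2)$, which is stronger than the stated $e^{-C(\log R)^2}$; the union bound over the $O_k(1)$ multi-indices is harmless. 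The only imprecision is your closing remark about bounded $R$: enlarging the implied constant does not rescue the regime $R\to1^+$, where the threshold $R^{k+1}\log R$ tends to $0$ while $e^{-C(\log R)^2}\to1$, so the statement with uniform constants degenerates there; but this is an artefact of how the lemma is phrased, and since in the paper (and in your proof's applications) only $R\to\infty$ is relevant, it does not affect anything.
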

\subsection{Quantifying $\beta$}
\begin{lem}
	\label{lemma 1}
	Let $f_n$ be as in \eqref{function}, $R>1$  and $F_n(y)= f_n(Ry/\sqrt{n})$ for $y\in B(1)$. Suppose that $n\in S$ satisfies the conclusion of Theorem \ref{Erdos-Hall}, then  
	\begin{align}
	\mathbb{P}\left(\min_{y\in B(1)}\max\{|F_n(y)|,|\nabla F_n(y)|\}\leq R^{-4}(\log R)^{-3}\right)\leq (\log R)^{-1} \nonumber	\end{align}
\end{lem}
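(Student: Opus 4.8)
The plan is a net–plus–anti-concentration argument. The only randomness is in the Gaussian coefficients $\{a_\xi\}$, while $n$ is fixed and satisfies the conclusion of Theorem \ref{Erdos-Hall}. I would discretise $B(1)$ on a scale fine enough that a point with $\max\{|F_n|,|\nabla F_n|\}$ small forces a net point with the same property (using the $C^2$ control from Lemma \ref{lemma2}), bound the probability at a single point by a Gaussian small–ball estimate, and sum.

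\textbf{Step 1: Gaussian data at one point.} Fix $y\in B(1)$. By \eqref{function} the vector $(F_n(y),\partial_1F_n(y),\partial_2F_n(y))$ is centred Gaussian in $\mathbb{R}^3$. Since the covariance $r_n$ of \eqref{covariance} is even (lattice points occur in pairs $\pm\xi$), $\nabla r_n(0)=0$, so $F_n(y)$ and $\nabla F_n(y)$ are uncorrelated, hence independent; moreover $\Var(F_n(y))=r_n(0)=1$ and a direct computation from \eqref{function}--\eqref{measure} gives $\Cov(\nabla F_n(y))=4\pi^{2}R^{2}M_n$, where $M_n=\int_{S^1}\lambda\lambda^{t}\,d\mu_n(\lambda)$. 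The crucial point is that $\det M_n$ is bounded below: the entries $\lambda\mapsto\lambda_i\lambda_j$ are smooth, hence of bounded variation on the circle, so Theorem \ref{Erdos-Hall} (via Koksma's inequality) gives $M_n=\tfrac12 I_2+O\big((\log n)^{-\kappa+\epsilon}\big)$ and therefore $\det M_n\geq\tfrac18$ for $n$ large. Hence the Gaussian density of $(F_n(y),\nabla F_n(y))$ is $\ll R^{-2}$, and integrating it over $\{|z_0|\leq u\}\times\{z_1^2+z_2^2\leq u^2\}$ yields, uniformly in $y\in B(1)$ and $u>0$,
\begin{align}
\mathbb{P}\big(|F_n(y)|\leq u,\ |\nabla F_n(y)|\leq u\big)\ \ll\ \frac{u^{3}}{R^{2}}. \nonumber
\end{align}

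\textbf{Steps 2--3: discretisation and union bound.} Put $t=R^{-4}(\log R)^{-3}$ and let $\mathcal{A}$ be the event $\|F_n\|_{C^2(B(1))}\ll R^{3}\log R$, so $\mathbb{P}(\mathcal{A}^c)\leq e^{-C(\log R)^2}$ by Lemma \ref{lemma2} with $k=2$. Cover $B(1)$ by $O(\delta^{-2})$ balls of radius $\delta\asymp R^{-7}(\log R)^{-4}$ with centres $\{y_j\}$; this is exactly the scale for which, on $\mathcal{A}$, the $C^2$–bound forces the oscillation of $F_n$ and of $\nabla F_n$ over any such ball to be $\leq\delta\cdot O(R^{3}\log R)\leq t$. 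Thus on $\mathcal{A}$, if some $y\in B(1)$ has $\max\{|F_n(y)|,|\nabla F_n(y)|\}\leq t$, the nearest $y_j$ has $\max\{|F_n(y_j)|,|\nabla F_n(y_j)|\}\leq 2t$. Combining with Step 1,
\begin{align}
\mathbb{P}\Big(\min_{y\in B(1)}\max\{|F_n(y)|,|\nabla F_n(y)|\}\leq t\Big)\ &\leq\ \mathbb{P}(\mathcal{A}^c)+\sum_j\mathbb{P}\big(|F_n(y_j)|\leq 2t,\ |\nabla F_n(y_j)|\leq 2t\big) \nonumber \\
&\ll\ e^{-C(\log R)^2}+\delta^{-2}\,\frac{t^{3}}{R^{2}}. \nonumber
\end{align}
Since $\delta^{-2}\asymp R^{14}(\log R)^{8}$ and $t^{3}/R^{2}\asymp R^{-14}(\log R)^{-9}$, the second term is $\asymp(\log R)^{-1}$ and dominates the exponentially small first term, which is the asserted bound (the implied constant being harmless).

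The main obstacle is Step 1: everything hinges on the Gaussian vector $(F_n(y),\nabla F_n(y))\in\mathbb{R}^3$ being genuinely three–dimensional with covariance determinant $\gg R^{4}$ uniformly as $R\to\infty$. Independence of value and gradient is automatic from the evenness of $r_n$, but the quantitative non-degeneracy $\det M_n\gg 1$ is precisely where the equidistribution of the spectral measure (Theorem \ref{Erdos-Hall}) enters, through the conversion of Kolmogorov distance into control of $\int_{S^1}\lambda\lambda^{t}\,d\mu_n$. The rest — calibrating the net radius to the $C^2$–bound and verifying that $e^{-C(\log R)^2}$ is lower order — is routine bookkeeping, and note that the exponents $-4$ and $-3$ in the threshold are exactly those produced by matching these two scales.
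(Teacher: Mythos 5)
Your proposal is correct and follows essentially the same route as the paper: a pointwise Gaussian small-ball bound of order $u^3/R^2$ (using independence of $F_n(y)$ and $\nabla F_n(y)$ together with non-degeneracy of the gradient covariance, $\det \asymp R^4$, extracted from Theorem \ref{Erdos-Hall}), combined with a net of mesh $\asymp R^{-7}(\log R)^{-4}$ calibrated against the $C^2$ bound of Lemma \ref{lemma2} and a union bound. The only cosmetic difference is that you bound the joint three-dimensional Gaussian density (invoking Koksma's inequality explicitly), whereas the paper factors the probability and passes from the lattice-point sum to the integral directly; the exponents and the final $(\log R)^{-1}$ bound match.
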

\begin{proof}
	Let $\epsilon_1>0$ be some (small) parameter. Differentiating $r_n(y,y)=1$, we see that $F_n(y)$ and $\nabla F_n(y)$ are independent random variables. Therefore, bearing in mind that $F_n$ is a stationary field, we have 
	\begin{align}
	\mathbb{P}\left( |F_n(y)|\leq \epsilon_1 \hspace{2mm} \text{and} \hspace{2mm} |\nabla F_n(y)|\leq \epsilon_1 \right)= \mathbb{P}\left( |F_n(y)|\leq \epsilon_1 \right) \mathbb{P}\left( |\nabla F_n(y)|\leq \epsilon_1 \right) \nonumber \\
	=  \mathbb{P}\left( |F_n(0)|\leq \epsilon_1\right)\mathbb{P}\left(|\nabla F_n(0)|\leq \epsilon_1 \right)= \frac{1}{\sqrt{2\pi}}\int_{-\epsilon_1}^{\epsilon_1}e^{-t^2/2} dt \cdot \mathbb{P}\left(|\nabla F_n(0)|\leq \epsilon_1 \right) \nonumber \\
	\leq \epsilon_1 \mathbb{P}\left(|\nabla F_n(0)|\leq \epsilon_1 \right) \label{27}
	\end{align}
	The covariance matrix of the Gaussian vector $\nabla F_n(0)$ is given by
	\begin{align}
	C(0)=\frac{	4\pi^2 R^2}{N}\begin{pmatrix}
	\sum_{\xi} \xi_1^2 & \sum_{\xi} \xi_1\xi_2 \\
	\sum_{\xi} \xi_1\xi_2 & \sum_{\xi} \xi_2^2
	\end{pmatrix} \nonumber
	\end{align}  
	with determinant
	\begin{align}
	\det C(0)= \frac{16 \pi ^4 R^4}{N^2}\sum_{\xi,\xi'}(\xi_1^2(\xi'_2)^2- \xi_1\xi_2\xi'_1\xi'_2)= \frac{16 \pi ^4 R^4}{N^2} \sum_{\xi,\xi'} \sin( \theta_{\xi}-\theta_{\xi'})^2 \nonumber
	\end{align}
	where $\xi= e^{2\pi i\theta_{\xi}}$. Using Theorem \ref{Erdos-Hall} to pass from the sum to the integral, and the identity $\sin (\cdot)^2=(1-\cos(2\cdot))/2$, we have 
	\begin{align}
	\det C(0)=8 \pi ^4 R^4 + \int_{0}^{2\pi}\int_{0}^{2\pi}\cos (2(x-y))dxdy+o(1)= 8 \pi ^4 R^4 +o(1) \nonumber
	\end{align}
	Hence, 
	\begin{align}
	\mathbb{P}\left(|\nabla F_n(0)|\leq \epsilon_1 \right)&=\frac{1}{4\sqrt{2}\pi^4 R^2 } \int_{[-\epsilon_1,\epsilon_1]^2}\exp \left((\det C(0))^{-1} \sum_{\xi} \xi_2^2 x_1^2 - 2\xi_1\xi_2 x_1x_2+ \xi_1^2 x_2^2\right)dx_1dx_2 \nonumber \\ &\leq    \frac{\epsilon_1^2}{R^2 } \label{28}
	\end{align}
	Inserting \eqref{28} into \eqref{27}, we obtain 
	\begin{align}
	\mathbb{P}\left( |F_n(y)|\leq \epsilon_1 \hspace{2mm} \text{and} \hspace{2mm} |\nabla F_n(y)|\leq \epsilon_1 \right) \leq \frac{\epsilon_1^3}{R^2}. \label{29}
	\end{align}
	Now, consider an $\eta$-net on $B(1)$ and denote by $y_i$ the points of the net. By Lemma \ref{lemma2}, we know that $ ||F_n||_{C^2(B(1))}\ll R^3\log R$ outside an event of probability at most $O(e^{-C(\log R)^2})$. Thus, since every point $y\in B(1)$ is at distance at most $\eta$ from a point $y_i$ on the net, we have 
	\begin{align}
	&F_n(y)= F_n(y_i) + O(\eta  R^3 \log R  )  & \nabla F_n(y)= \nabla F_n(y_i) + O(\eta  R^3\log R  ), \nonumber
	\end{align}
	for some $y_i$. Therefore, if $|F_n(y)|\leq \beta$ and $|\nabla(F_n(y))|\leq \beta$, then, taking $\eta= c\beta( R^3 \log R)^{-1}$ for some sufficiently small $c>0$, also $|F_n(y_i)|\leq \beta/2$ and $|\nabla(F_n(y_i))|\leq \beta/2$, which has probability at most $O(\beta^3/R^2)$ by \eqref{29}. Taking the union bound over the net, which has $O(\eta^{-2})$ points, we deduce that
	 $$\mathbb{P}\left(\min_{y\in B(1)}\max\{|F_n(y)|,|\nabla F_n(y)|\}\leq \beta\right)\ll\frac{\beta^3}{R^2}\eta^{-2} +e^{-C (\log R)^2}\ll \beta R^4 \log R^2 +e^{-C (\log R)^2}.$$ Hence, taking $\beta= R^{-4}(\log R)^{-3}$ we deduce the lemma. 
\end{proof}

\subsection{Quantifying $\tau$}

To quantify $\tau$ we use the following recent result \cite[Theorem 5.5]{BM}:
\begin{lem}[Beliaev-Maffucci]
	\label{lemma 3}
	Let $R>1$, $\epsilon>0$, $n\in S$, $F_n(y)=f(Ry/\sqrt{n})$ and $F_{\mu}(y)=f_{\mu}(Ry)$  for $y\in B(1)$. Suppose that $n$ satisfies the conclusion of Theorem \ref{Erdos-Hall}, then there exists a coupling such that the field $\tilde{F}= F_n- F_{\mu}$ satisfies
	\begin{align}
	\mathbb{P}\left(||\tilde{F}||_{C^2(B(1))} \gg R^2 \log R \cdot (\log n)^{-2\kappa/3+\epsilon} \right) \ll (\log R)^{-1}. \nonumber
	\end{align}
\end{lem}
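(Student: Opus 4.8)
The plan is to build the coupling from a single complex Gaussian white noise together with an optimal transport map between the two spectral measures, and then to pass from the resulting covariance estimates to a $C^2$-bound by Gaussian concentration. Let $W$ be a Hermitian, antipode-equivariant complex white noise on $S^1$ with respect to the uniform probability measure $\mu$, normalised so that $F_\mu(y)=\int_{S^1}e(\langle Ry,\lambda\rangle)\,dW(\lambda)$ is a real stationary field with the law of $f_\mu(Ry)$. Since $n$ satisfies the conclusion of Theorem \ref{Erdos-Hall}, the Kolmogorov--Smirnov discrepancy of $\mu_n$ is at most $(\log n)^{-\kappa+\epsilon}$; I would then observe that for every arc $A\subset S^1$ one has $\mu(A)\le \mu_n\big(A^{(\log n)^{-\kappa+\epsilon}}\big)$ (the fattening picks up length $2(\log n)^{-\kappa+\epsilon}$, which beats the discrepancy defect), so by Strassen's theorem there is a coupling of $\mu$ with $\mu_n$ supported on $\{|x-y|\le(\log n)^{-\kappa+\epsilon}\}$, and since $\mu$ is atomless this is realised by a measure preserving map $T\colon(S^1,\mu)\to (S^1,\mu_n)$ with $\|T-\mathrm{id}\|_{L^\infty}\ll(\log n)^{-\kappa+\epsilon}$. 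Setting $F_n(y):=\int_{S^1}e(\langle Ry,T\lambda\rangle)\,dW(\lambda)$ gives, after the change of variables $\eta=T\lambda$, a field with covariance $\int_{S^1}e(\langle R(y-z),\eta\rangle)\,d\mu_n(\eta)$, i.e. with the law of $f_n(Ry/\sqrt n)$, defined on the same probability space as $F_\mu$; thus $\tilde F(y)=\int_{S^1}\big(e(\langle Ry,T\lambda\rangle)-e(\langle Ry,\lambda\rangle)\big)\,dW(\lambda)$.

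Next I would estimate $\mathbb{E}[(\partial^\beta\tilde F(y))^2]$ for every multi-index $\beta$ with $|\beta|\le 3$. Differentiating under the integral, $\partial_y^\beta\big(e(\langle Ry,T\lambda\rangle)-e(\langle Ry,\lambda\rangle)\big)=(2\pi iR)^{|\beta|}\big((T\lambda)^\beta e(\langle Ry,T\lambda\rangle)-\lambda^\beta e(\langle Ry,\lambda\rangle)\big)$; since $|\lambda|,|T\lambda|\le 1$ and $|e(\langle Ry,T\lambda\rangle)-e(\langle Ry,\lambda\rangle)|\le 2\pi R\,|T\lambda-\lambda|$ for $y\in B(1)$, the bracket is $O_{|\beta|}\big(R\,|T\lambda-\lambda|\big)$. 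By the $L^2$-isometry of the white-noise integral this yields $\mathbb{E}[(\partial^\beta\tilde F(y))^2]\ll_{|\beta|}R^{2|\beta|+2}\,\|T-\mathrm{id}\|_{L^\infty}^2\ll_{|\beta|}R^{2|\beta|+2}(\log n)^{-2\kappa+\epsilon}$, uniformly over $y\in B(1)$, and the same computation gives that the canonical $L^2$-metric of the Gaussian field $\partial^\beta\tilde F$ is Lipschitz with constant $\ll R^{|\beta|+2}(\log n)^{-\kappa+\epsilon}$.

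Finally I would sum over $|\beta|\le 2$. Each $\partial^\beta\tilde F$ is a centred Gaussian field on $B(1)$ with pointwise standard deviation $\sigma_\beta\ll R^{3}(\log n)^{-\kappa+\epsilon}$ and $L^2$-Lipschitz constant $L_\beta\ll R^{4}(\log n)^{-\kappa+\epsilon}$, so $L_\beta/\sigma_\beta\ll R$ and Dudley's entropy bound gives $\mathbb{E}[\sup_{B(1)}|\partial^\beta\tilde F|]\ll\sigma_\beta\sqrt{\log R}\ll R^{3}(\log n)^{-\kappa+\epsilon}\sqrt{\log R}$; the Borell--TIS inequality then gives, for $t$ at least twice this mean, $\mathbb{P}\big(\|\tilde F\|_{C^2(B(1))}>t\big)\ll \exp\big(-c\,t^2 R^{-6}(\log n)^{2\kappa-\epsilon}\big)$. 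When $R$ is at most a small power of $\log n$ (as under the hypotheses of Theorem \ref{thm 3}, where $R\le(\log n)^{B-\epsilon}$ with $B$ a small multiple of $\kappa$), taking $t=R^2\log R\,(\log n)^{-2\kappa/3+\epsilon}$ makes the mean $\ll t$ and the exponential factor $o\big((\log R)^{-1}\big)$, which is the asserted bound; in fact this route leaves room in the exponent of $\log n$, the stated $2\kappa/3$ being a convenient weakening. The main obstacle is really the first step: one has to extract from the one-sided Kolmogorov--Smirnov estimate of Theorem \ref{Erdos-Hall} a transport map whose uniform displacement is of the same strength (so that the only loss is the harmless $\epsilon$ in the exponent), and one then has to carry this limited control through two $y$-derivatives, each costing a factor $R$, and through the supremum over $B(1)$; it is the interplay of these lossy steps, constrained by the admissible range of $R$, that fixes the final exponent, while the covariance computations and the chaining are otherwise routine. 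An alternative to the transport step, closer to what appears in the literature, is to partition $S^1$ into $\asymp(\log n)^{\kappa}$ arcs, each of which by Theorem \ref{Erdos-Hall} contains $\asymp N(\log n)^{-\kappa}\to\infty$ lattice points, and to couple the white noise arc by arc.
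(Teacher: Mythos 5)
This lemma is not proved in the paper at all: it is imported verbatim from Beliaev--Maffucci \cite[Theorem 5.5]{BM}, so there is no internal argument to compare yours against. Taken on its own terms, your white-noise coupling is a legitimate and essentially complete alternative in the regime where the lemma is actually used. The transport step is even simpler than you make it: the monotone (quantile) coupling does it directly, since with $G(x)=x$ and $G_n$ the distribution function of $\mu_n$ one has $\|G_n^{-1}\circ G-\mathrm{id}\|_{\infty}\ll\sup_{a<b}|\mu_n(a,b)-\mu(a,b)|$, so Strassen is unnecessary; the covariance identity after the change of variables, the Lipschitz bound on the canonical metric, and the Dudley/Borell--TIS step are standard and correct. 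Two caveats. First, for the coupled field $F_n$ to be real-valued you must take $T$ odd (equivariant under $\lambda\mapsto-\lambda$) so that the Hermitian symmetry of $W$ is respected; this is possible because both $\mu$ and $\mu_n$ are centrally symmetric, but it needs saying. Second, and more substantively, your concentration step needs the threshold $t=R^2\log R\,(\log n)^{-2\kappa/3+\epsilon}$ to dominate $\mathbb{E}\sup_{B(1)}|\partial^{\beta}\tilde F|\ll R^{3}\sqrt{\log R}\,(\log n)^{-\kappa+\epsilon}$, i.e.\ $R\ll(\log n)^{\kappa/3}$ up to logarithms; you acknowledge this, and it covers the only place the lemma is invoked (the proof of Theorem \ref{thm 3} takes $R\le(\log n)^{\kappa/42-\epsilon/2}$), where your route in fact gives a stronger conclusion — exponent $\kappa$ instead of $2\kappa/3$ and an exponentially small exceptional probability instead of $(\log R)^{-1}$ — but it does not recover the quoted statement for arbitrary $R>1$, so as written you have proved a sufficient variant rather than the cited result in full. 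The weaker exponent $2\kappa/3$ and the $(\log R)^{-1}$ probability in \cite{BM} stem from a different construction (a discretised, arc-by-arc coupling of the white noise, balancing the number of arcs against the per-arc displacement), which is exactly the alternative you sketch in your final sentence.
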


\subsection{Proof of Theorem \ref{thm 3}}
We are finally ready to prove Theorem \ref{thm 3}
\begin{proof}[Proof of Theorem  \ref{thm 3}]
	Take a subsequence of $n\in S$ such that the conclusion of Theorem \ref{Erdos-Hall} holds (and $N\rightarrow \infty$). Let $R>1$, rescale $f_n$ as $F_n(y)=f_n(Ry/\sqrt{n})$ and $f_{\mu}$ as  $F_{\mu}(y)=f_{\mu}(Ry)$ for  $y\in B(1)$. By Lemma \ref{lemma 1}, Lemma \ref{lemma2} and Lemma \ref{lemma 3}, outside an event of probability at most $O((\log R)^{-1})$, we have the following bounds: 
	\begin{enumerate}
		\item \begin{align}
		\min_{y\in B(1)}\max\{|F_n|,|\nabla F_n|\} \geq R^{-4}(\log R)^{-3}:= \beta\nonumber
		\end{align}
		\item \begin{align}
		||F_n||_{C^2} \leq 2 R^3\log R :=M\nonumber
		\end{align}
		\item \begin{align}
		||F_{\mu}- F_n||_{C^2}\leq R^2 \log R (\log n)^{-2\kappa/3 +\epsilon}:= \tau \nonumber
		\end{align}
	\end{enumerate}
	Lemma \ref{stability}, provided that $\tau \ll \beta ^2/ M= R^{-11} (\log R)^{-5}$,  implies that
	\begin{align}
	\mathcal{L}(F_\mu)= \mathcal{L}(F_n) \left( 1 +O\left(\tau \frac{M^3}{\beta^4}\right)\right) = \mathcal{L}(F_n)\left( 1 +O\left(R^{27}\log R^{16} \cdot (\log n)^{-2\kappa/3 +\epsilon} \right) \right) \nonumber
	\end{align} 
	outside an event of probability at most $O(\log R^{-1})$. The Kac-Rice formula  \cite[Theorem 6.3]{AW} implies that $\mathbb{E}[\mathcal{L}(F_n)]\ll R$. Thus, outside and event of probability at most $O(\log R^{-1})$, $\mathcal{L}(F_n)\ll R\log R$ which implies
	\begin{align}
	|\exp( it\mathcal{L}(F_n)- \exp(it \mathcal{L}(F_{\mu}))| \ll t | \mathcal{L}(F_n)- \mathcal{L}(F_{\mu})| \leq t R^{28}  (\log R)^{17} (\log n)^{-2\kappa/3 +\epsilon}. \label{42}
	\end{align}
	Taking $R= (\log n)^{\frac{\kappa}{42}-\epsilon/2}$ and $n$ large enough depending on $t$, \eqref{42} implies the Theorem. 
\end{proof}
\section*{Acknowledgement}
The author would like to thank Igor Wigman for pointing out the question considered here and for the many discussions, Zeev Rudnick for valuable comments that helped improving the presentation of the article, as well as Oleksiy Klurman for useful conversations. The author would also like to thank the anonymous referees for pointing our an error in the previous draft of the article and their valuable comments that greatly helped to improve the presentation.  This work was supported by the Engineering and Physical Sciences Research Council [EP/L015234/1].  
The EPSRC Centre for Doctoral Training in Geometry and Number Theory (The London School of Geometry and Number Theory), University College London.

\end{document}